\documentclass[12pt]{amsart}
\usepackage{tikz-cd}
\usepackage{enumitem}
\setlist[description]{leftmargin=\parindent,labelindent=\parindent}
\usetikzlibrary{matrix,arrows,decorations.pathmorphing}
\usepackage{amssymb}
\usepackage{amsmath,amscd}
\usepackage{mathrsfs}
\usepackage{url}
\usepackage{cite}
\usepackage{fullpage}
\usepackage{hyperref}
\usepackage{verbatim}
\usepackage{comment}
\usepackage{fancyvrb}
\usepackage{fvextra}
\newtheorem{thm}{Theorem}[section]
\newtheorem{prop}[thm]{Proposition}
\newtheorem{lem}[thm]{Lemma}
\newtheorem{cor}[thm]{Corollary}

\theoremstyle{definition}
\newtheorem{definition}[thm]{Definition}

\newtheorem{rem}[thm]{Remark}

\numberwithin{equation}{section}

\newcommand{\X}{\mathcal{X}}

\newcommand{\T}{\mathcal{T}}
\newcommand{\V}{\mathcal{V}}

\newcommand{\N}{\mathcal{N}}
\newcommand{\B}{\mathcal{B}}

\newcommand{\zz}{\mathbb{Z}}
\newcommand{\cc}{\mathbb{C}}
\newcommand{\qq}{\mathbb{Q}}

\newcommand{\I}{\mathcal{I}}
\newcommand{\C}{\mathcal{C}}
\newcommand{\M}{\mathcal{M}}
\newcommand{\p}{\mathbb{P}}
\newcommand{\pp}{\mathbb{P}}

\renewcommand{\H}{\mathcal{H}}
\newcommand{\F}{\mathcal{F}}

\newcommand{\E}{\mathcal{E}}
\renewcommand{\O}{\mathcal{O}}

\newcommand{\Mg}{\mathcal{M}_g}
\renewcommand{\tilde}{\widetilde}

\DeclareMathOperator{\GL}{GL}

\DeclareMathOperator{\Spec}{Spec}

\DeclareMathOperator{\Sym}{Sym}

\DeclareMathOperator{\PGL}{PGL}
\DeclareMathOperator{\PU}{PU}

\DeclareMathOperator{\BGL}{BGL}
\DeclareMathOperator{\BPU}{BPU}

\newcommand{\W}{\mathcal{W}}
\renewcommand{\gg}{\mathbb{G}}

\renewcommand{\eta}{\sigma}

\newcommand{\Mb}{\overline{\M}}

\title{The rational Chow rings of moduli spaces of hyperelliptic curves with marked points}
\author{Samir Canning}
\author{Hannah Larson}
\thanks{During the preparation of this article, S.C. was partially supported by NSF RTG grant DMS-1502651 and by the Swedish Research Council under grant no. 2016-06596 while in residence at Institut Mittag-Leffler in Djursholm, Sweden during the fall of 2021. H.L. was supported by the Hertz Foundation and NSF GRFP under grant DGE-1656518. This research was partially conducted during the period H.L served as a Clay Research Fellow.}

\begin{document}
\begin{abstract}
We determine the rational Chow ring of the moduli space $\H_{g,n}$ of $n$-pointed smooth hyperelliptic curves of genus $g$ when $n \leq 2g+6$. We also show that the Chow ring of the partial compactification $\I_{g,n}$, parametrizing $n$-pointed irreducible nodal hyperelliptic curves, is generated by tautological divisors. Along the way, we improve Casnati's result that $\H_{g,n}$ is rational for $n \leq 2g+8$ to show $\H_{g,n}$ is rational for $n \leq 3g+5$.
\end{abstract}
\maketitle

\section{Introduction}
The intersection theory of the moduli space of genus $g$ curves $\M_g$ is of central interest in algebraic geometry. The Chow ring with rational coefficients $A^*(\Mg)$ is completely understood for $g\leq 9$ \cite{FaberI, FaberII, Izadi, PenevVakil, 789}. On the other hand, much less is known about the Chow rings of the moduli spaces $\M_{g,n}$ of genus $g$ curves with $n>0$ marked points. The complete picture is only understood for $A^*(\M_{0,n})$ \cite{Keel}, $A^*(\M_{1,n})$ for $n\leq 10$ \cite{Belorousski}, and $A^*(\M_{2,1})$ \cite{FaberPhD}. Because of the structure of the boundary of the compactification $\Mb_g$, computing Chow rings $A^*(\M_{g',n})$ with $g' \leq g$ and $n \leq 2(g - g')$ is a fundamental first step towards understanding $A^*(\Mb_{g})$, which is only completely understood in genus $2$ and $3$ \cite{Mumford,FaberI}.

Recent progress in the unpointed case has been based off of studying Hurwitz spaces parametrizing curves of low gonality and arbitrary genus \cite{Hurwitz}. In this paper, we begin to pursue the same strategy in the pointed case by studying moduli spaces of pointed hyperelliptic curves $\H_{g,n}\subset \M_{g,n}$. The case $n=0$ is straightforward. The coarse space of $\H_g$ is a quotient of an open subset of affine space by a finite group action. Thus, with rational coefficients, one has $A^*(\H_g) = \qq$. (With \emph{integral coefficients}, the Chow ring of $\H_g$ is \emph{not} trivial, and was determined in \cite{DiLorenzo} for $g$ even, and \cite{EdidinFulghesu} for $g$ odd. In this paper, we shall work with rational coefficients throughout.)


When one adds in marked points, however, the geometry and intersection theory of $\H_{g,n}$ becomes more interesting. With regards to its birational geometry, the moduli space $\H_{g,n}$ is known to be (see Figure \ref{f1}, left)
\begin{itemize}
    \item rational when $n \leq 2g+8$ (Casnati \cite{Casnati}),
    \item uniruled when $n \leq 4g+5$ (Benzo and Agostini--Barros \cite{Benzo,AB}),
    \item of Kodaira dimension $4g+3$ when $n = 4g+6$ (Barros--Mullane \cite{BarrosMullane}),
    \item of general type when $n \geq 4g+7$ (Schwarz \cite{Schwarz}).
\end{itemize}
We use the birational geometry of $\H_{g,n}$ as a proxy for its complexity.
In particular, the nice sorts of presentations that are typically used in calculating Chow rings often show a space is (uni)rational. Thus, we might expect that the intersection theory of $\H_{g,n}$ becomes more complicated as $n$ grows, being understandable in the rational range, but possibly quite difficult to access for $n$ large. 

The Chow ring is usually more difficult to compute than the birational type.
Indeed, to prove rationality, one must understand a dense open subset of the space, whereas to determine the Chow ring, one must understand a full stratification and how the pieces fit together.
Previous work on the intersection theory of $\H_{g,n}$ has determined the Chow group in codimension 1 (Scavia \cite{Scavia}) and the full (integral) Chow ring  in the case of 1 marked point (Pernice \cite{Pernice}).
Here, we determine the full rational Chow ring $A^*(\H_{g,n})$ for $n \leq 2g + 6$. 

The picture on the left below summarizes the previously known results about $\H_{g,n}$; the version on the right adds in our new contributions (Corollary \ref{hcor} and Theorem \ref{rat}).

\begin{figure}[h!]
    \centering
    \includegraphics[width=5in]{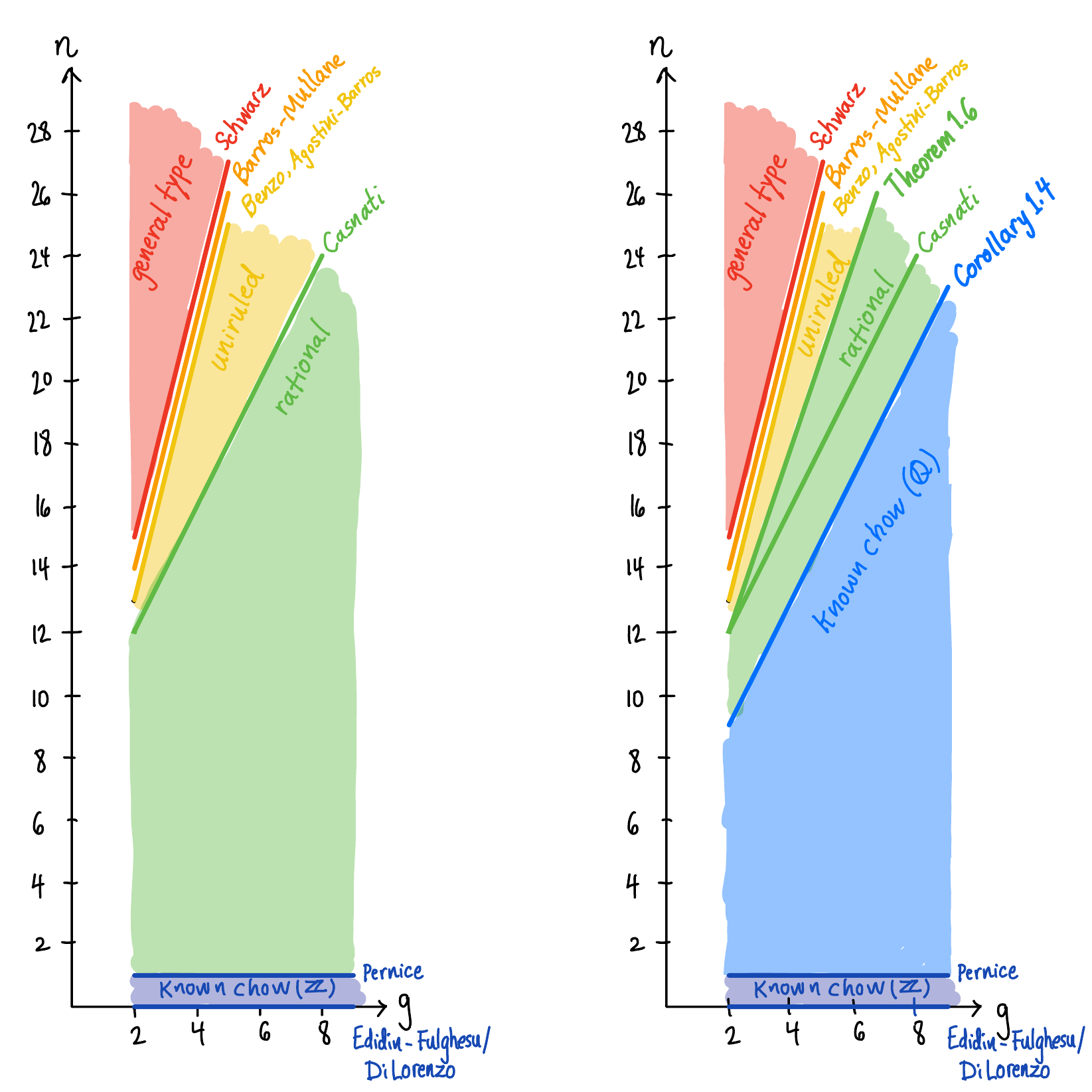}
    \caption{Summary of previously known results about $\H_{g,n}$ (left) and our new results in context (right). Loosely speaking, later colors in the rainbow indicate a ``more explicit" or ``more complete" understanding of the space.}
    \label{f1}
\end{figure}

\subsection{Statement of results}
The main part of our work is to establish that the Chow ring of $\H_{g,n}$ is generated by divisors when $n \leq 2g+6$. Our techniques for doing so naturally extend over a partial compactification of the moduli space. 
Let $\I_{g,n}$ be the stack parametrizing irreducible, nodal $n$-pointed hyperelliptic curves. Equivalently, if $\M_{g,n}^{\mathrm{irr}} \subset \Mb_{g,n}$ denotes the locus of irreducible curves, then $\I_{g,n}$ is the closure of $\H_{g,n}$ in $\M_{g,n}^{\mathrm{irr}}$.
Let $\delta \in A^1(\I_{g,n})$ be the class of the locus of singular curves, and let $\psi_i \in A^1(\I_{g,n})$ denote the restriction of the $i^{\text{th}}$ psi class on $\Mb_{g,n}$ to $\I_{g, n} \subset \Mb_{g,n}$.  We work over an arbitrary algebraically closed field of characteristic not $2$. 

\begin{thm} \label{divgen}
 Let $g \geq 2$ and $n \leq 2g+6$. Then $A^*(\I_{g,n})$ is generated by the divisor classes $\psi_1, \ldots, \psi_n$ and $\delta$.
\end{thm}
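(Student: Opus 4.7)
I would prove Theorem \ref{divgen} by induction on $n$, using the forgetful morphism $\pi:\I_{g,n}\to\I_{g,n-1}$ together with the double cover structure of hyperelliptic curves.

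For the base case $n=0$, the goal is to show $A^*(\I_g)$ is generated by $\delta$. The excision sequence
\[
A^{*-1}(\delta)\xrightarrow{\delta_*} A^*(\I_g)\to A^*(\H_g)\to 0
\]
together with the rational triviality $A^*(\H_g)=\qq$ reduces the problem to placing $\delta_*A^*(\delta)$ inside $\qq[\delta]$. The boundary $\delta$ is an open substack of a conic bundle over $\H_{g-1}$, so $A^*(\delta)$ is generated by a single divisor class $h$, and the self-intersection formula combined with an identification of the normal bundle $N_{\delta/\I_g}$ as a multiple of $h$ lets us inductively write $\delta_*(h^k)$ as a polynomial in $\delta$.

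For the inductive step, I identify $\I_{g,n}$ with an open substack of the universal curve $\mathcal{C}\to\I_{g,n-1}$; the closed complement is the union of the previous $n-1$ sections $\sigma_i$ and the nodal locus above $\delta$. Excision and the inductive hypothesis applied to these loci (each a hyperelliptic moduli space of smaller $n$, or lower genus after normalizing a node) reduce the task to showing $A^*(\mathcal{C})$ is generated over $A^*(\I_{g,n-1})$ by divisor classes. I factor $\mathcal{C}\xrightarrow{f}\bar{\mathcal{C}}\to\I_{g,n-1}$ where $\bar{\mathcal{C}}=\mathcal{C}/\iota$ is a conic bundle (generically a $\mathbb{P}^1$-bundle, with nodal fibers over $\delta$); a stratified projective bundle theorem generates $A^*(\bar{\mathcal{C}})$ over $A^*(\I_{g,n-1})$ by the tautological divisor $h=c_1(\mathcal{O}_{\bar{\mathcal{C}}}(1))$. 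Rationally, $f^*$ identifies $A^*(\bar{\mathcal{C}})$ with the $\iota$-invariants of $A^*(\mathcal{C})$, and the anti-invariant part is generated by divisors such as the ramification divisor and the differences $\sigma_i-\iota_*\sigma_i$ of the previous sections. The hyperelliptic identity $\omega_{\mathcal{C}/\I_{g,n-1}}\cong f^*\mathcal{O}_{\bar{\mathcal{C}}}(g-1)$ modulo base pullbacks yields $\psi_n=(g-1)f^*h$ modulo base classes; since $g\geq 2$, we invert to solve for $f^*h$ in terms of $\psi_n$ and base classes, and the other new divisor classes reduce analogously to $\psi_i$'s and $\delta$.

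The main obstacle is extending the above analysis across the boundary $\delta$, where the conic bundle $\bar{\mathcal{C}}$ has nodal fibers and the hyperelliptic cover degenerates; this demands a fine stratification of $\mathcal{C}$ by the position of the new marked point (at a Weierstrass point, conjugate to a previous marked point, at a node, etc.) with a careful inductive analysis on each stratum. The bound $n\leq 2g+6$ enters here, since for larger $n$ some of these auxiliary strata drop below the dimension required for the inductive hypothesis to produce divisorially generated pushforwards through the excision sequence.
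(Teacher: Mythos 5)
Your overall skeleton (induction on $n$, excision against the loci where the new point is conjugate to an old one or Weierstrass, reduction to the universal curve) matches the spirit of the paper, but the load-bearing step of your inductive argument is an unsupported claim that is in fact the entire difficulty of the theorem. You assert that for the double cover $f:\C\to\bar{\C}$ of the universal curve over its quotient by the hyperelliptic involution, the anti-invariant part of $A^*(\C)$ is ``generated by divisors such as the ramification divisor and the differences $\sigma_i-\iota_*\sigma_i$.'' Nothing forces this: the projective bundle theorem controls only $f^*A^*(\bar{\C})$, i.e.\ the invariant part, and there is no general principle bounding the anti-invariants. Worse, your argument as written makes no essential use of the hypothesis $n\leq 2g+6$ in this step, so if it were correct it would prove divisor generation for all $n$; this contradicts Graber--Pandharipande's non-tautological class on $\M_{2,20}=\H_{2,20}$ (cited in the paper), since by Lemma \ref{r2} divisor generation forces $A^{\geq 2}(\H_{g,n})=0$ there. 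Your proposed explanation of where the bound enters (auxiliary strata dropping in dimension) is not the actual mechanism: in the paper the bound $n\leq 2g+6$ arises because every degree $n-1\leq 2g+5$ divisor in the smooth locus of an irreducible bidegree $(g+1,2)$ curve on $\pp^1\times\pp^1$ imposes independent conditions on $H^0(\O(g+1,2))$ (a Riemann--Roch count, Lemma \ref{indconditions}), which is what makes the relevant evaluation map surjective and its kernel a vector bundle. The paper circumvents the anti-invariant problem entirely by never working on the universal curve: it builds each stratum of $\I_{g,n}^\circ$ directly as an open subset of a projective bundle over an affine bundle over $\BPU\times\BPU$, using the hyperelliptic map together with the degree $g+1$ map given by $\O(p_1+\cdots+p_{g+1})$ to embed the pointed curve in $\pp^1\times\pp^1$.

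Two secondary issues. First, your base case treats $\Delta\subset\I_{g,0}$ as (an open substack of) a conic bundle over $\H_{g-1}$, but $\Delta$ contains deeper strata with several nodes, whose normalizations have genus as low as $0$ or $1$; these need their own analysis (the paper avoids this by computing $A^*(\I_{g,0})$ in one stroke from the space of degree $2g+2$ branch divisors over $\BGL_2$). Second, even granting divisor generation, the statement names the specific generators $\psi_1,\ldots,\psi_n,\delta$; to reduce your auxiliary divisors (the ramification divisor, $\sigma_i-\iota_*\sigma_i$, the classes $[D_{ij}]$) to these, one needs to know that $\psi_1,\ldots,\psi_n,\delta$ span $A^1(\I_{g,n})$, which the paper gets from Scavia's theorem over $\cc$ and reproves in general characteristic by test curves and a separate vanishing argument in Section \ref{pp}. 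Your proposal does not address this.
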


\begin{rem}
Theorem \ref{divgen} is used in our forthcoming work \cite{forthcoming}, which makes significant progress towards determining the pairs $(g, n)$ for which $A^*(\Mb_{g,n})$ is tautological. When $n \leq 2g + 6$, Theorem \ref{divgen} guarantees that the failure of $A^*(\Mb_{g,n})$ to be tautological will not be the fault of classes supported on $\I_{g,n} \subset \Mb_{g,n}$. For this application, it is advantageous to know the result for the larger locus $\I_{g,n}$, instead of just $\H_{g,n}$.
\end{rem}

On the smooth locus $\H_{g,n} \subset \I_{g,n}$, it is not hard to determine all relations among the psi classes for any $n$. 
The tautological ring of $\M_{g,n}$ is generated by the psi classes and pullbacks of kappa classes from $\M_g$.
Since the kappa classes restrict to zero on $\H_g \subset \M_g$, it follows that the psi classes generate the tautological ring $R^*(\H_{g,n}) \subseteq A^*(\H_{g,n})$. 
The following proposition shows that the structure of the tautological ring is quite simple.

\begin{prop} \label{rprop}
For $g \geq 2$  and any $n$, we have
\[R^*(\H_{g,n}) = \qq[\psi_1, \ldots, \psi_n]/(\psi_1, \ldots, \psi_n)^2.\] 
\end{prop}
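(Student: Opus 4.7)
My plan is to establish $\psi_i \psi_j = 0$ in $A^*(\H_{g,n})$ for all $i, j$; combined with the excerpt's observation that $R^*(\H_{g,n})$ is generated by $\psi_1, \dots, \psi_n$, this gives the desired presentation. Because the forgetful map $\H_{g,n} \to \H_{g,k}$ (for $k = 1$ or $2$), remembering only the marked point(s) of interest, is a morphism between smooth loci of $\M_{g,\bullet}$ and pulls psi classes back to psi classes, it suffices to prove $\psi_1^2 = 0$ in $A^*(\H_{g,1})$ and $\psi_1 \psi_2 = 0$ in $A^*(\H_{g,2})$.

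The main input is the universal hyperelliptic double cover. Write $\pi \colon \C \to \H_g$ for the universal curve and $f \colon \C \to \P$ for the degree-two cover onto the $\p^1$-bundle $\P = \C/\iota$, with ramification $W \subset \C$ and branch $B \subset \P$. Since $A^*(\H_g) = \qq$, the projective bundle formula gives $A^*(\P) = \qq[h]/(h^2)$ (with $h$ the relative hyperplane class), and the branch divisor has class $[B] = (2g+2) h$. The relation $\omega_\pi \cong f^* \O_\P(-2) \otimes \O_\C(W)$ coming from Riemann--Hurwitz then yields the key identity
\[
\psi_1 = c_1(\omega_\pi) = (g-1)\, f^* h \;\in\; A^1(\H_{g,1}).
\]

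The case $\psi_1^2 = 0$ is immediate from $h^2 = 0$. For $\psi_1 \psi_2 = 0$, identify $\H_{g,2} = \C \times_{\H_g} \C \setminus \Delta_\C$ and let $F = f \times_{\H_g} f$. In $A^*(\P \times_{\H_g} \P) = \qq[h_1, h_2]/(h_1^2, h_2^2)$ we have $[\Delta_\P] = h_1 + h_2$ and hence $h_1 h_2 = h_1 \cdot [\Delta_\P]$. Since $F^{-1}(\Delta_\P) = \Delta_\C \cup \Gamma_\iota$ (with $\Gamma_\iota$ the graph of the hyperelliptic involution), pulling back along $F$ gives
\[
\psi_1 \psi_2 = (g-1)\, \psi_1 \cdot \bigl([\Delta_\C] + [\Gamma_\iota]\bigr)
\]
in $A^*(\C \times_{\H_g} \C)$. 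Restricting to $\H_{g,2}$ removes the $[\Delta_\C]$ term, and the projection formula identifies $\psi_1 \cdot [\Gamma_\iota]$ with the pushforward of $\psi_1|_{\Gamma_\iota \cap \H_{g,2}}$ from $\Gamma_\iota \cap \H_{g,2} \cong \C \setminus W$ (via $p \mapsto (p, \iota p)$). This restriction agrees with $\psi|_{\C \setminus W}$, and it vanishes because in $A^*(\P \setminus B)$ the relation $[B] = (2g+2) h = 0$ forces $h = 0$, so $f^* h$ vanishes on $\C \setminus W = f^{-1}(\P \setminus B)$.

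The main step requiring verification is the scheme-theoretic identity $F^*[\Delta_\P] = [\Delta_\C] + [\Gamma_\iota]$ with multiplicity one on each component, including along the Weierstrass locus where $\Delta_\C$ and $\Gamma_\iota$ meet. A local coordinate computation near a Weierstrass point, where $f$ has the form $y \mapsto y^2$, confirms this: the defining equation $y_1^2 - y_2^2$ of the preimage factors as $(y_1 - y_2)(y_1 + y_2)$, one factor for each component.
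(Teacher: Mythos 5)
Your argument for the relations is correct and takes a genuinely different route from the paper. You work directly with the universal double cover $f\colon \C \to \P$ over $\H_g$, the identity $\psi = c_1(\omega_{\C/\H_g}) = (g-1)f^*h$ from Riemann--Hurwitz, and the decomposition $F^*[\Delta_{\P}] = [\Delta_{\C}] + [\Gamma_\iota]$ (your multiplicity check at Weierstrass points is the right thing to verify, and uses characteristic $\neq 2$). The paper instead computes $A^*(\I_{g,1})$ via an explicit $\mathrm{B}\!\GL_2$-quotient presentation and deduces $\psi_i\psi_j \in \qq\cdot\delta^2$ on the partial compactification $\I_{g,n}$ from the Edidin--Hu formulas for $[D_{ii}]$, $[D_{ij}]$ together with the emptiness of $D_{ii}\cap D_{ij}$; your approach is more self-contained on the open locus $\H_{g,n}$, while the paper's buys the stronger statement over $\I_{g,n}$ (Lemma \ref{r2}), which is what it actually needs later. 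Reduction to $n=1,2$ via forgetful maps is fine since $\pi_j^*\psi_i = \psi_i$ on the smooth, distinct-points locus.

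However, there is a genuine gap: you have only shown that $(\psi_1,\ldots,\psi_n)^2 = 0$ in $A^*(\H_{g,n})$, which together with generation gives a \emph{surjection} $\qq[\psi_1,\ldots,\psi_n]/(\psi_1,\ldots,\psi_n)^2 \twoheadrightarrow R^*(\H_{g,n})$. The stated equality also requires that $\psi_1,\ldots,\psi_n$ be linearly independent in $A^1(\H_{g,n})$, i.e.\ that no further relations occur in degree $1$. This is not automatic --- on these hyperelliptic loci many tautological classes do vanish (indeed $A^*(\H_g)=\qq$ and the kappa classes restrict to zero), so one must rule out a linear relation among the $\psi_i$. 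The paper handles this either by citing Scavia's computation of $\Pic(\overline{\H}_{g,n})$ over $\cc$ or, in arbitrary characteristic $\neq 2$, by the test-curve computation of Lemma \ref{psi-indep}, which exhibits an invertible intersection matrix against the families $T_i$ and $T_{ij}$. Your proposal needs this independence statement (or an equivalent) to complete the proof.
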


\begin{rem}
In \cite{Tavakol}, Tavakol determines the tautological ring of a different partial compactification $\H_{g,n}^{\mathrm{rt}} \supset \H_{g,n}$, which implies Proposition \ref{rprop} by excision. In Lemma \ref{r2}, we nearly determine the tautological ring of our partial compactification $\I_{g,n}$, which also immediately implies Proposition \ref{rprop}.
\end{rem}

Combining Proposition \ref{rprop} with Theorem \ref{divgen}, we obtain the following.
\begin{cor} \label{hcor}
If $n \leq 2g + 6$, then 
\[A^*(\H_{g,n}) = R^*(\H_{g,n}) = \qq[\psi_1,...,\psi_n]/(\psi_1,\ldots,\psi_n)^2.\]
\end{cor}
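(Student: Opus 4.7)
The plan is to deduce this directly from Theorem \ref{divgen} and Proposition \ref{rprop} via excision. Since $\H_{g,n} \subset \I_{g,n}$ is the open substack parametrizing smooth curves, its complement is precisely the support of the boundary divisor $\delta$. The usual excision sequence for Chow groups of algebraic stacks then yields a surjection
\[ A^*(\I_{g,n}) \twoheadrightarrow A^*(\H_{g,n}), \]
and the image of $\delta$ under this restriction map is zero. By Theorem \ref{divgen}, the source is generated (as a $\qq$-algebra) by $\psi_1, \ldots, \psi_n, \delta$, so the target is generated by the restrictions of $\psi_1, \ldots, \psi_n$ alone.

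These restricted classes are tautological by definition, so the surjection $R^*(\H_{g,n}) \hookrightarrow A^*(\H_{g,n})$ is actually an equality. Now I apply Proposition \ref{rprop}, which identifies
\[ R^*(\H_{g,n}) = \qq[\psi_1,\ldots,\psi_n]/(\psi_1,\ldots,\psi_n)^2, \]
giving the claimed description of $A^*(\H_{g,n})$.

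There is no real obstacle here: the corollary is a formal consequence of the two inputs, and the only thing to verify is the elementary fact that open restriction kills $\delta$ and preserves the generating classes $\psi_i$. All of the genuine work lies in proving Theorem \ref{divgen} (divisorial generation of $A^*(\I_{g,n})$) and Proposition \ref{rprop} (the tautological relations on $\H_{g,n}$), both of which are given.
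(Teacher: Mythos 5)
Your proof is correct and is exactly the argument the paper intends: restrict along the open inclusion $\H_{g,n} \subset \I_{g,n}$, note that $\delta$ dies and the $\psi_i$ survive as tautological classes, so Theorem \ref{divgen} forces $A^*(\H_{g,n}) = R^*(\H_{g,n})$, and Proposition \ref{rprop} supplies the presentation. The paper simply states the corollary as an immediate combination of these two results, so there is nothing to add.
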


\begin{rem}
Note that Corollary \ref{hcor} and Theorem \ref{divgen} do \emph{not} hold for all $n$. In \cite[Theorem 3]{GraberPandharipande}, Graber and Pandharipande prove that $A^*(\M_{2,20}) \neq R^*(\M_{2,20})$ by producing an explicit non-tautological algebraic cycle. (Note that $n = 20$ falls in the range where $\M_{2,n} = \H_{2,n}$ is of general type.)
\end{rem}

Our approach to proving Theorem \ref{divgen} is inspired by Casnati's method of constructing models in $\pp^2$ \cite{Casnati}, but we use models on $\pp^1 \times \pp^1$ instead. One consequence of our approach is an improvement on Casnati's bound for when these spaces are rational.

\begin{thm} \label{rat}
If $n \leq 3g+6$, then $\H_{g,n}$ is rational.
\end{thm}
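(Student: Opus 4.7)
The plan is to give an explicit rational model for a dense open of $\H_{g,n}$ by embedding hyperelliptic curves as bidegree $(2, g+1)$ divisors on $\p^1 \times \p^1$, paralleling the approach used for Theorem~\ref{divgen}. A smooth such embedding exists for $C$ hyperelliptic of genus $g$ equipped with a base-point-free $g^1_{g+1}$ whose divisors are not hyperelliptic-invariant; generically this is automatic. The second projection realizes the hyperelliptic $g^1_2$, while the first projection realizes the auxiliary $g^1_{g+1}$.

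Concretely, let $V = H^0(\p^1 \times \p^1, \O(2, g+1))$, which has dimension $3g+6$, and form the incidence variety $P_n \subset \p V \times (\p^1 \times \p^1)^n$ of pairs $(C, (p_i))$ with $p_i \in C$, acted on by $G := \PGL_2 \times \PGL_2$. Let $P_n^\circ \subset P_n$ be the open locus where $p_1, \dots, p_{g+1}$ are distinct and share a common first coordinate in $\p^1$. On $P_n^\circ$, the $g^1_{g+1}$ canonically recovered from the abstract data as $|\O_C(p_1 + \cdots + p_{g+1})|$ coincides with the first projection, so the birational quotient $P_n^\circ / G$ maps birationally onto a dense open of $\H_{g,n}$; this identification kills the $g$-dimensional ambiguity coming from the choice of $g^1_{g+1}$ on a hyperelliptic curve.

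For $n \geq g+3$, I would use the marked points to rigidify $G$: send the common first coordinate of $p_1, \dots, p_{g+1}$ to $0$, the first coordinates of $p_{g+2}, p_{g+3}$ to $\infty$ and $1$, and the second coordinates of $p_1, p_2, p_3$ to $0, \infty, 1$. The remaining free parameters are the coordinates of the other marked points, together with $C$ itself, which lies in the linear subspace of $\p V = \p^{3g+5}$ cut out by the $n$ hyperplane conditions $p_i \in C$. For generic marked points these are independent, so for $n \leq 3g+5$ the curve $C$ varies in $\p^{3g+5-n}$, and the slice is an open subvariety of a projective bundle over affine space, hence rational. The count $(g-2) + 2 + 2(n-g-3) + (3g+5-n) = 2g + n - 1$ matches $\dim \H_{g,n}$.

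The remaining regime $n \leq 2g+8$ (including the small-$n$ cases where too few marked points are available for the normalization above) is already handled by Casnati. For the boundary value $n = 3g+6$, the $n$ hyperplane conditions on $C$ become overdetermined on the generic configuration and the simple parameter count fails; here I would view $\H_{g, 3g+6}$ as the universal curve over the already-rational $\H_{g, 3g+5}$ and exploit the hyperelliptic structure plus the abundance of rational sections to extract rationality. The main obstacles will be verifying genericity for the birational equivalence $P_n^\circ / G \sim \H_{g,n}$ (that a generic divisor $p_1 + \cdots + p_{g+1}$ on a hyperelliptic curve determines an embedding into $\p^1 \times \p^1$ of the expected form) and handling the delicate boundary case $n = 3g+6$, where the $\p^1 \times \p^1$ model degenerates and a supplementary argument is needed.
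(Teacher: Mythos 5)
Your construction for $g+3 \leq n \leq 3g+5$ is essentially the paper's own argument: embed $C$ in $\pp^1 \times \pp^1$ via the hyperelliptic pencil together with $|\O_C(p_1+\cdots+p_{g+1})|$, use three marked points in each factor to rigidify $\PGL_2 \times \PGL_2$, and exhibit a dense open of $\H_{g,n}$ as an open subset of a projective bundle (the linear system of $(2,g+1)$-curves through the configuration) over a rational configuration space of points. Up to $n = 3g+5$ this is correct, the genericity you flag (that $|\O_C(p_1+\cdots+p_{g+1})|$ is a base-point-free pencil for generic markings) is the easy generic case of the paper's Lemma \ref{themap}, and your dimension count checks out.

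The genuine gap is the boundary case $n = 3g+6$, which your argument does not establish. Your proposed repair --- viewing $\H_{g,3g+6}$ as the universal curve over the rational variety $\H_{g,3g+5}$ and invoking ``the hyperelliptic structure plus the abundance of rational sections'' --- is not a proof: that universal curve is a fibration in genus-$g$ curves, equivalently a double cover of a $\pp^1$-bundle over the base, and neither a positive-genus fibration over a rational base nor a double cover of a rational variety is rational in general (the latter depends on the branch divisor; a double cover of $\pp^2$ branched in a sextic is a K3). No actual argument is supplied for this step. The fix the paper uses is elementary and stays entirely inside your setup: the point $p_{g+1}$ is \emph{redundant data}. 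Since $p_1,\ldots,p_{g+1}$ exhaust the degree-$(g+1)$ intersection of $C$ with the distinguished fiber $F$, the point $p_{g+1}$ is the residual intersection of $C$ with $F$ once $p_1,\ldots,p_g$ are known, hence is determined by $(C,p_1,\ldots,p_g)$. One should therefore parametrize configurations of only $n-1$ points ($g$ of them on $F$) and impose only $n-1 \leq 3g+5$ linear conditions on $\pp V = \pp^{3g+5}$; dropping $p_{g+1}$ removes one point-parameter (its position on $F \cong \pp^1$) and one linear condition, so the total dimension $2g+n-1$ is unchanged, but the fiber of curves is now a $\pp^{3g+6-n}$, nonempty for all $n \leq 3g+6$. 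Without this savings of one point your method caps out at $3g+5$, which is precisely one short of the stated theorem.
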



The discrepancy between the range where we can prove $\H_{g,n}$ is rational
and where we can determine the Chow ring is the difference between when a \emph{general} collection of $n$ points on $\pp^1 \times \pp^1$ impose independent conditions on a certain linear system versus when \emph{every} configuration of $n$ points we care about imposes independent conditions.

\subsection{Structure of the paper}
In Section \ref{stacks}, we define stacks $\I_{g,n}$ of irreducible, nodal pointed hyperelliptic curves and the locally closed substacks in the stratification we shall use.
In Section \ref{tsec}, we define the tautological classes on $\I_{g,n}$ and prove several relations among them. This involves constructing some explicit quotient stacks with the same coarse spaces as $\I_{g,0}$ and $\I_{g,1}$. For $n \geq 2$ however, we cannot give a global quotient description of $\I_{g,n}$. Instead, we build each of the pieces of our stratification using models on $\pp^1 \times \pp^1$ in Section \ref{quotstack}.
The two maps to $\pp^1$ in forming these models are the hyperelliptic map and the complete linear series of a (weighted) sum of marked points having degree $g+1$. For this linear series to be base point free, certain points are prohibited from being conjugate or Weierstrass. 
An inductive argument allows us to eliminate strata where a pair of points is conjugate. 
Meanwhile, we construct strata with marked Weierstrass points by imposing tangency conditions to vertical lines of the ruling in our models.
At the end of Section \ref{quotstack}, we present a proof of Theorem \ref{divgen}, relying on Scavia's result over $\mathbb{C}$. In Section \ref{pp}, we work to establish the needed parts of Scavia's theorem over an arbitrary algebraically closed field of characteristic not $2$.

\subsection{Notations and Conventions}
If the ground field is not mentioned, we are working over an algebraically closed field $k$ of characteristic not $2$. We will explicitly mention where we are using results that are only known to hold over $\mathbb{C}$. We use the classical subspace convention for projective bundles.

\subsection*{Acknowledgments} We are grateful to our advisors, Elham Izadi and Ravi Vakil, respectively, for the many helpful conversations. We thank Dan Petersen for his comments and for pointing out \cite{Tavakol}. We also thank Renzo Cavalieri for his comments on an earlier draft.

\section{Pointed irreducible, nodal hyperelliptic curves}\label{stacks}
Let $S$ be a $k$-scheme. A family of irreducible, nodal hyperelliptic curves of genus $g$ over $S$ is a morphism of $S$ schemes $C\rightarrow P\rightarrow S$ where $C\rightarrow S$ is a family of irreducible, nodal curves of genus $g$, $P\rightarrow S$ is a $\pp^1$-fibration, and $C\rightarrow P$ is finite and flat of degree $2$. An $n$-pointed family of irreducible, nodal hyperelliptic curves over $S$ is a family of irreducible, nodal hyperelliptic curves $C\rightarrow P\rightarrow S$ together with $n$ disjoint sections $p_1,\dots,p_n:S\rightarrow C$ of $C\rightarrow S$ such that the sections are disjoint from the nodes in fibers of $C \to S$. 
An arrow between families $(C\rightarrow P\rightarrow S, p_1,\dots,p_n:S\rightarrow C)$ and $(C'\rightarrow P'\rightarrow S',p_1',\dots,p_n':S\rightarrow C)$ is simply a commutative diagram
\[
\begin{tikzcd}
C \arrow[d] \arrow[r] & P \arrow[d] \arrow[r] & S \arrow[d] \\
C' \arrow[r]          & P' \arrow[r]          & S'         
\end{tikzcd}
\]
that also commutes the sections.
For brevity, we sometimes omit $S$ and $P$, and write $(C,p_1,\dots,p_n)$ for a family of $n$-pointed irreducible, nodal hyperelliptic curves.
\begin{definition}
The stack of \emph{$n$-pointed irreducible, nodal genus $g$ hyperelliptic curves} $\I_{g,n}$ is the stack whose objects are families of $n$-pointed irreducible, nodal hyperelliptic curves with morphisms defined as above.

The stack of \emph{$n$-pointed genus $g$ hyperelliptic curves} $\H_{g,n} \subset \I_{g,n}$ is the substack defined by the additional condition that $C \to S$ is smooth.
\end{definition}

Let $\M_{g,n}^{\mathrm{irr}} \subset \M_{g,n}$ denote the substack of pointed, irreducible curves.
One can also describe $\I_{g,n}$ as follows.

\begin{lem}
$\I_{g,n}$ is the closure in $\M_{g,n}^{\mathrm{irr}}$ of $\H_{g,n} \subset \M_{g,n}$.
\end{lem}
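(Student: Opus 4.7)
The plan is to construct a forgetful morphism $F : \I_{g,n} \to \M_{g,n}^{\mathrm{irr}}$ by $(C \to P \to S, p_1, \ldots, p_n) \mapsto (C \to S, p_1, \ldots, p_n)$ and show it is an isomorphism onto the closure $\overline{\H_{g,n}}$ in $\M_{g,n}^{\mathrm{irr}}$. The target is well-defined: since $g \geq 2$ and each geometric fiber of $C \to S$ is irreducible, the pointed curve $(C, p_1, \ldots, p_n)$ is automatically stable, so $F$ lands in $\M_{g,n}^{\mathrm{irr}}$.

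First I would show $F$ is fully faithful by canonically reconstructing $P$ from $C/S$. For any irreducible hyperelliptic curve (smooth or nodal) of arithmetic genus $g \geq 2$, the hyperelliptic involution $\iota$ is the unique nontrivial automorphism whose quotient has arithmetic genus $0$; this can be seen via the relative dualizing sheaf, whose canonical map factors uniquely as $C \to \pp^1 \to \pp^{g-1}$ (the second map being the degree $g-1$ Veronese). In families, uniqueness lets $\iota$ glue to a global involution of $C/S$, and $P = C/\iota$ functorially recovers the $\pp^1$-fibration. This produces a canonical quasi-inverse to $F$ on its image, so $F$ is a monomorphism of stacks and an equivalence onto its schematic image. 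Next, density of $\H_{g,n}$ in $\I_{g,n}$ yields the inclusion $F(\I_{g,n}) \subset \overline{\H_{g,n}}$: any irreducible nodal hyperelliptic curve is a double cover of $\pp^1$ whose branch divisor of degree $2g+2$ has some coincident pairs (at the nodes), and deforming these coincidences apart smooths the curve while keeping the $n$ marked points (which are disjoint from the nodes by definition of $\I_{g,n}$) in general position.

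For the reverse inclusion $F(\I_{g,n}) \supset \overline{\H_{g,n}}$, I would apply the valuative criterion: given a family of pointed irreducible stable curves over a DVR with hyperelliptic generic fiber, the generic hyperelliptic involution extends uniquely to a global involution of $C$ over the whole base by the canonical-sheaf construction above, and its quotient is a flat family of arithmetic-genus-$0$ curves, hence a $\pp^1$-fibration, exhibiting the special fiber as an object of $\I_{g,n}$. The main technical obstacle is precisely this extension step, equivalently the closedness of the hyperelliptic locus in $\M_{g,n}^{\mathrm{irr}}$. The irreducibility hypothesis is essential: on all of $\Mb_{g,n}$ one would need admissible covers to handle limits that break into chains, but for irreducible nodal curves of genus $\geq 2$ the dualizing sheaf is well-behaved enough that the hyperelliptic $\pp^1$-fibration extends uniquely and directly to the special fiber.
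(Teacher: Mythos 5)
Your argument is correct in outline but follows a genuinely different route from the paper. The paper reduces everything to known facts: the closure of $\H_{g,n}$ in $\Mb_{g,n}$ is the preimage of $\overline{\H}_g$ under the forgetful map, $\overline{\H}_g$ is the image of the space of admissible degree $2$ covers, and the admissible covers with irreducible source are precisely the irreducible nodal hyperelliptic curves (with markings kept off the nodes so that irreducibility is preserved). You instead give a self-contained argument: full faithfulness of the forgetful functor via uniqueness of the hyperelliptic involution, smoothability by separating the coincident points of the branch divisor, and closedness via the valuative criterion. What your approach buys is independence from the admissible covers machinery; what it costs is that you must prove from scratch the one nontrivial input that machinery supplies for free, namely that an irreducible nodal stable limit of smooth hyperelliptic curves is again hyperelliptic. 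You correctly flag this as the crux but only assert that ``the dualizing sheaf is well-behaved enough.'' To complete it along the lines you indicate: over a discrete valuation ring the relative canonical map sends the total space into a $\pp^{g-1}$-bundle; the scheme-theoretic closure of the image of the generic fiber is flat over the base, so its special fiber has degree $g-1$; the canonical image of the special fiber $C_0$ is an irreducible nondegenerate curve contained in that special fiber, hence is a rational normal curve of degree exactly $g-1$, forcing the canonical map of $C_0$ to have degree $2$ (using that $\omega_{C_0}$ is base point free on an irreducible nodal curve of genus $\geq 2$). Alternatively, the hyperelliptic involution extends across the special fiber by the valuative criterion of separatedness for $\Mb_g$, and in characteristic $\neq 2$ the quotient is a flat family whose irreducible, reduced, arithmetic-genus-$0$ special fiber must be $\pp^1$. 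Either completion is routine, so this is a valid alternative proof rather than a gap, but the key closedness step does need to be written out rather than asserted.
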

\begin{proof}
The closure of $\H_{g,n}$ inside $\Mb_{g,n}$ is the preimage along $\Mb_{g,n} \to \Mb_g$ of
$\overline{\H}_g \subset \Mb_g$.
The locus $\overline{\H}_g \subset \Mb_g$ is well-understood as the image of the space of admissible degree $2$ covers.
The admissible covers with irreducible source are exactly the irreducible nodal hyperelliptic curves. When we add markings, we require that they do not meet the node, so the curve stays irreducible.
\end{proof}

\subsection{The hyperelliptic involution and the dualizing sheaf} \label{hi}
Given an irreducible, nodal hyperelliptic curve $C$, let $\nu: \tilde{C} \to C$ be the normalization. Write $q_i, q_i'$, $i = 1, \ldots, m$ for the pairs of points lying over the nodes of $C$.
Considering the composition $\tilde{C} \to C \to \pp^1$, we see that $\tilde{C}$ admits a degree $2$ map $\tilde{C} \to \pp^1$ and each pair $q_i, q_i'$ lies in the same fiber of this map.
\begin{center}
\includegraphics[width=2.5in]{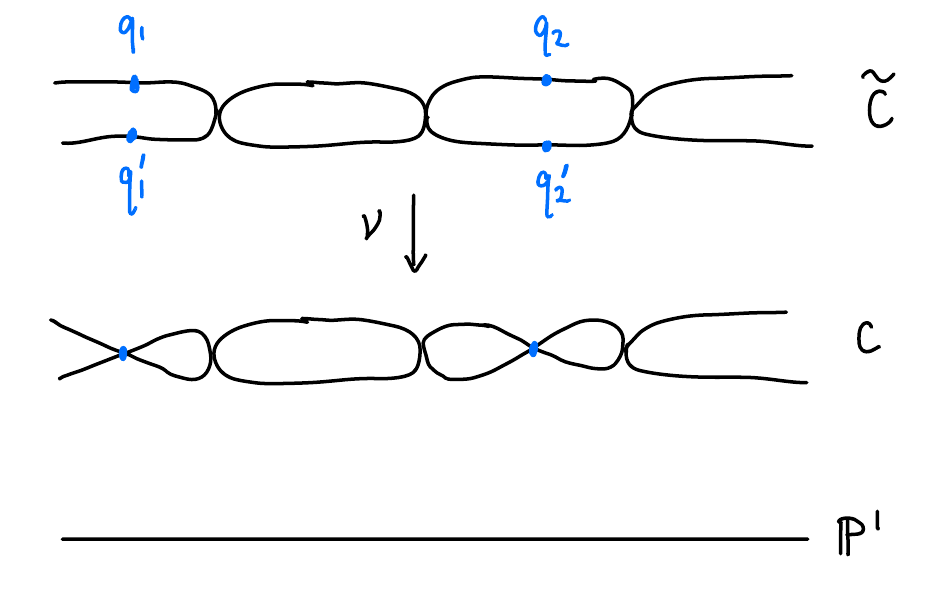}
\end{center}
If $g(\tilde{C}) \geq 2$, then $\tilde{C}$ is a hyperelliptic curve and $q_i, q_i'$ are conjugate under the hyperelliptic involution on $\tilde{C}$.
If $g(\tilde{C}) = 1$, then we have the condition $q_i + q_i' \sim q_j + q_j'$ for all $i, j$. If $g(\tilde{C}) = 0$, then every pair of points are linearly equivalent, but the following condition on triples $(i, j, k)$ must be satisfied:
the degree $2$ polynomial vanishing at $q_i+q_i'$ must lie in the span of the degree $2$ polynomial vanishing at $q_j+q_j'$ and the degree $2$ polynomial vanishing at $q_k + q_k'$.

In summary, irreducible nodal hyperelliptic curves are just built by gluing together conjugate points on a hyperelliptic curve $\tilde{C}$ of lower genus (with a suitable modification when $\tilde{C}$ has genus $1$ or $0$).

\begin{lem}
Let $C$ be an irreducible, nodal hyperelliptic curve of genus $g \geq 2$. Then $C$ admits a unique degree $2$ map $\alpha: C \to \pp^1$ (up to automorphisms of $\pp^1$). 
\end{lem}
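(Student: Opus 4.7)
My plan is to imitate the classical canonical map argument for smooth hyperelliptic curves, using the dualizing sheaf $\omega_C$ in place of the canonical bundle; this matches the title of the present subsection.

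Since $C$ is irreducible and nodal of arithmetic genus $g \geq 2$, it is Gorenstein, and $\omega_C$ is an invertible sheaf with $\deg \omega_C = 2g-2$ and $h^0(\omega_C) = g$. First I would show that for any degree $2$ map $\alpha: C \to \pp^1$, the line bundle $L := \alpha^* \O_{\pp^1}(g-1)$ is isomorphic to $\omega_C$. Pulling back sections gives $h^0(L) \geq g$, and Riemann--Roch together with Serre duality on the Gorenstein curve $C$ yields
\[
h^0(L) - h^0(\omega_C \otimes L^{-1}) = \deg L + 1 - g = g - 1,
\]
so $h^0(\omega_C \otimes L^{-1}) \geq 1$. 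Since $\omega_C \otimes L^{-1}$ has degree $0$ on the irreducible curve $C$, any nonzero global section is nowhere vanishing, forcing $\omega_C \otimes L^{-1} \cong \O_C$, as claimed.

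Next, the identity $|\omega_C| = \alpha^*|\O_{\pp^1}(g-1)|$ shows $|\omega_C|$ is base point free, and the canonical map $\phi_{|\omega_C|}: C \to \pp^{g-1}$ factors as $\phi_{|\omega_C|} = v_{g-1} \circ \alpha$, where $v_{g-1}: \pp^1 \hookrightarrow \pp^{g-1}$ denotes the $(g-1)$-uple Veronese embedding (an embedding because $g \geq 2$). The image $v_{g-1}(\pp^1) \subset \pp^{g-1}$ is a rational normal curve. Given a second degree $2$ map $\alpha': C \to \pp^1$, the same argument gives $\phi_{|\omega_C|} = v_{g-1} \circ \alpha'$ after a change of basis of $H^0(\omega_C)$. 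Comparing the two factorizations, the change of basis must preserve the image rational normal curve, and hence induces an element of $\Aut(v_{g-1}(\pp^1)) \cong \Aut(\pp^1)$ identifying $\alpha$ with $\alpha'$.

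The main obstacle is establishing $\omega_C \cong \alpha^* \O_{\pp^1}(g-1)$; this is exactly where irreducibility of $C$ is essential, so that a degree $0$ invertible sheaf with a nonzero global section is forced to be trivial. The rest of the argument is a routine consequence of the canonical image being the rational normal curve, together with the fact that automorphisms of a rational normal curve are induced by automorphisms of its $\pp^1$-parameter.
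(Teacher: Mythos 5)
Your proof is correct, but it takes a genuinely different route from the paper's. The paper passes to the normalization $\nu:\tilde{C}\to C$ and argues case by case on $g(\tilde{C})$: for $g(\tilde{C})\geq 2$ it invokes uniqueness of the $g^1_2$ on a smooth hyperelliptic curve, and for $g(\tilde{C})=1$ or $0$ it uses the constraint that the preimages of each node must be identified by the degree $2$ map to pin it down. You instead work directly on the nodal Gorenstein curve: you show $\omega_C\cong\alpha^*\O_{\pp^1}(g-1)$ via Riemann--Roch and the fact that a degree $0$ line bundle with a section on an \emph{irreducible} curve is trivial, then factor the canonical map through the $(g-1)$-uple Veronese and use that automorphisms of a rational normal curve come from $\Aut(\pp^1)$. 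Your argument is uniform (no case analysis on the normalization) and in fact subsumes the paper's subsequent lemma that $\omega_C\cong L^{\otimes g-1}$, which the paper proves separately by essentially the same Riemann--Roch computation you use; the small additional point your write-up leaves implicit is that the injective pullback $H^0(\pp^1,\O(g-1))\to H^0(C,\omega_C)$ is an isomorphism by the dimension count $h^0(\omega_C)=g$, which is what lets you write $|\omega_C|=\alpha^*|\O_{\pp^1}(g-1)|$ as a complete linear system. The paper's approach buys a concrete picture of irreducible nodal hyperelliptic curves as gluings of conjugate points on lower-genus hyperelliptic curves (which it reuses elsewhere, e.g.\ to show $\Delta$ is irreducible), while yours is shorter if one only wants the uniqueness statement and is willing to use duality for Gorenstein curves, which the paper does use freely elsewhere.
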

\begin{proof}
In this proof, we write that two maps to $\pp^1$ are equal if they differ by composition by an automorphism of $\pp^1$.
Let $\nu: \tilde{C} \to C$ be the normalization.
Suppose $\alpha': C \to \pp^1$ is another degree $2$ map. Then $\alpha' \circ \nu$ and $\alpha' \circ \nu$ are both degree $2$ maps $\tilde{C} \to \pp^1$. If $g(\tilde{C}) \geq 2$, then we immediately see $\alpha' \circ \nu = \alpha \circ \nu$, and hence $\alpha' = \alpha$.

If $g(\tilde{C}) = 1$, then there is a unique degree two map $\tilde{C} \to \pp^1$ that sends $q_1$ and $q_1'$ to the same point, namely the complete linear system of $\O_{\tilde{C}}(q_1 + q_1')$. As $\alpha' \circ \nu$ and $\alpha \circ \nu$ both send $q_1$ and $q_1'$ to the same point, we see $\alpha' \circ \nu = \alpha \circ \nu$, and hence $\alpha' = \alpha$.

If $g(\tilde{C}) = 0$, then since $g(C) \geq 2$, we know $C$ has at least two nodes. But there is a unique degree two map $\tilde{C} \to \pp^1$ which sends $q_1$ and $q_1'$ to the same point \emph{and} sends $q_2$ and $q_2'$ to the same point. Hence, again we see $\alpha' \circ \nu = \alpha \circ \nu$, and so $\alpha' = \alpha$.
\end{proof}

\begin{definition}
We call the unique degree two map $\alpha: C \to \pp^1$ the hyperelliptic map. We write $L := \alpha^*\O_{\pp^1}(1)$ for the corresponding degree $2$ line bundle.
Given a point $p \in C$, we write $\overline{p}$ for its conjugate under the hyperelliptic involution. Nodes are always fixed under the hyperelliptic involution. 
\end{definition}



We now note that two facts, which are well-known for smooth hyperelliptic curves, also hold in the irreducible nodal case (by the same arguments as in the smooth case).

\begin{lem}
Let $C$ be an irreducible, nodal hyperelliptic curve of genus $g$, $\alpha: C \to \pp^1$ be the hyperelliptic map, and let $L = \alpha^*\O_{\pp^1}(1)$ be the corresponding degree $2$ line bundle. Then the dualizing sheaf satisfies $\omega_C \cong L^{\otimes g-1}$.
\end{lem}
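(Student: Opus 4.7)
My plan is to adapt the standard smooth hyperelliptic argument: show that $\omega_C \otimes L^{-(g-1)}$ is a degree-zero line bundle admitting a nonzero global section, hence, on an irreducible curve, trivial. The only input beyond the smooth case is that $C$ is Gorenstein, which holds automatically for irreducible nodal curves, so $\omega_C$ is a line bundle and Serre duality is available.

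First I would compute $\alpha_* \O_C$. Since $\alpha \colon C \to \pp^1$ is finite and flat of degree $2$ and the characteristic is not $2$, the hyperelliptic involution induces an eigenspace splitting $\alpha_* \O_C = \O_{\pp^1} \oplus \mathcal{M}$, with $\mathcal{M}$ a line bundle on $\pp^1$. Comparing Euler characteristics, $\chi(\mathcal{M}) = \chi(\O_C) - \chi(\O_{\pp^1}) = -g$, forcing $\mathcal{M} \cong \O_{\pp^1}(-g-1)$. By the projection formula,
\[
\alpha_*(L^{g-1}) \;=\; \O_{\pp^1}(g-1) \otimes \alpha_*\O_C \;=\; \O_{\pp^1}(g-1) \oplus \O_{\pp^1}(-2),
\]
so $h^0(C, L^{g-1}) = g$. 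A degree count gives $\deg(\omega_C \otimes L^{-(g-1)}) = (2g-2) - 2(g-1) = 0$.

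Since $C$ is Gorenstein, Serre duality yields $h^1(C, \omega_C \otimes L^{-(g-1)}) = h^0(C, L^{g-1}) = g$, and Riemann--Roch $\chi(\omega_C \otimes L^{-(g-1)}) = 1-g$ then forces $h^0(C, \omega_C \otimes L^{-(g-1)}) = 1$. A nonzero global section provides a morphism $\O_C \to \omega_C \otimes L^{-(g-1)}$; since $C$ is integral and both sheaves are line bundles, this morphism is injective, and its cokernel (a torsion sheaf) has length equal to the degree difference $0$, so it is an isomorphism. Hence $\omega_C \cong L^{g-1}$. The only place the argument differs from the smooth case is the appeal to the Gorenstein property, so the main obstacle is merely the careful bookkeeping with pushforward and duality in the nodal setting, which goes through without incident.
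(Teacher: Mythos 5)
Your proof is correct and follows essentially the same route as the paper: both show that the degree-zero line bundle $\omega_C \otimes (L^{\otimes g-1})^\vee$ admits a nonzero section (via Riemann--Roch and Serre duality on the Gorenstein curve $C$, using that $h^0(C, L^{\otimes g-1}) \geq g$) and conclude it is trivial because $C$ is integral. Your explicit computation of $\alpha_*\O_C$ to pin down $h^0(C, L^{\otimes g-1}) = g$ exactly is more than needed --- the paper gets by with the one-line lower bound $h^0(C, L^{\otimes g-1}) \geq h^0(\pp^1, \O(g-1)) = g$ from pulling back sections.
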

\begin{proof}
By Riemann--Roch,
\[h^0(C, L^{\otimes g-1}) - h^0(C, \omega_C \otimes (L^{\otimes g-1})^{\vee}) = (2g - 2) - g + 1 = g - 1. \]
Meanwhile, we have $h^0(C, L^{\otimes g-1}) \geq h^0(\pp^1, \O(g-1)) = g$, so $h^0(C, \omega_C \otimes (L^{\otimes g - 1})^\vee) \geq 1$. But $ \omega_C \otimes (L^{\otimes g - 1})^\vee$ has degree $0$, so it has a non-trivial section if and only 
if it trivial, which means $\omega_C \cong L^{\otimes g - 1}$.
\end{proof}

Geometrically, this tells us that the complete linear system for $\omega_C$ sends $C$ to a double cover of a degree $g - 1$ rational normal curve in $\pp^{g-1}$. This implies the following.

\begin{lem} \label{georr}
Suppose $D = x_1 + \ldots + x_g$ is an effective degree $g$ divisor with $h^0(C, \O(D)) \geq 2$. Then $x_i = \overline{x}_j$ for some $i \neq j$.
\end{lem}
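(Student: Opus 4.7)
The plan is to combine Riemann--Roch on the Gorenstein curve $C$ with the description of $\omega_C$ from the previous lemma, and then argue by a pigeonhole count against the hyperelliptic map $\alpha$.

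First, Riemann--Roch gives
\[ h^0(C,\O(D)) - h^0(C, \omega_C \otimes \O(-D)) = \deg D - g + 1 = 1, \]
so the hypothesis $h^0(\O(D)) \geq 2$ yields a nonzero section of $\omega_C$ that vanishes along $D$. By the previous lemma $\omega_C \cong \alpha^*\O_{\pp^1}(g-1)$, and since both $H^0(\pp^1,\O(g-1))$ and $H^0(C,\omega_C)$ have dimension $g$ while the pullback $\alpha^*$ is injective (as $\alpha$ is surjective), every section of $\omega_C$ is of the form $\alpha^*f$ for some $f \in H^0(\pp^1,\O(g-1))$. Consequently, $D \leq \alpha^*\mathrm{div}(f)$ for some nonzero such $f$.

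Next, I write $\mathrm{div}(f) = \sum_j m_j p_j$ on $\pp^1$ with $\sum_j m_j = g-1$, so that $\alpha^*\mathrm{div}(f) = \sum_j m_j\, \alpha^*(p_j)$ is an effective divisor on $C$ of degree $2g-2$, invariant under the hyperelliptic involution. I then assume for contradiction that no $x_i = \overline{x}_j$ with $i \neq j$, and bound the contribution to $D$ from each fiber $\alpha^*(p_j)$ as follows. If $\alpha^*(p_j) = y_j + \overline{y}_j$ for a non-Weierstrass pair (so $y_j \neq \overline{y}_j$), the non-conjugacy hypothesis forces $D$ to use points from at most one of $\{y_j,\overline{y}_j\}$, contributing degree at most $m_j$. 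If $\alpha^*(p_j) = 2w_j$ is supported at a conjugation-fixed point $w_j$ (a Weierstrass point or a node), the non-conjugacy hypothesis restricts the multiplicity of $w_j$ in $D$ to at most $1 \leq m_j$. Summing, we obtain
\[ g = \deg D \leq \sum_j m_j = g-1, \]
a contradiction.

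The main obstacle I anticipate is simply the case analysis at conjugation-fixed fibers, but this is routine once one observes that the failure of the lemma's conclusion uniformly caps how much degree each fiber can contribute to $D$. The conceptual heart of the argument is Riemann--Roch on the Gorenstein curve $C$ combined with the $2$-to-$1$ geometry of $\alpha$, which geometrically expresses the fact that the canonical image of $C$ is a double cover of a rational normal curve in $\pp^{g-1}$.
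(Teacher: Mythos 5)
Your proof is correct and follows essentially the same route as the paper: Riemann--Roch produces a section of $\omega_C(-D)$, and the identification $\omega_C \cong \alpha^*\O_{\pp^1}(g-1)$ forces $D$ to sit inside the pullback of a degree $g-1$ divisor from $\pp^1$. The only difference is cosmetic: where the paper invokes the fact that any degree $g$ divisor on the rational normal curve spans $\pp^{g-1}$, you carry out the equivalent fiber-by-fiber pigeonhole count against $\alpha$, which is a perfectly valid (and arguably more self-contained) way to finish.
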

\begin{proof}
By Riemann--Roch, if $h^0(C, \O(D)) \geq 2$, then $h^0(C, \omega_C(-D)) \geq 1$, which is to say the image of $D$ under the canonical embedding $C \to \pp^{g-1}$ lies in a hyperplane.
However, any degree $g$ divisor on a rational normal curve in $\pp^{g-1}$ spans 
all of $\pp^{g-1}$. Therefore, a length two subscheme of $D$ must be sent to a length one subscheme under the canonical map, which is to say $x_i = \overline{x}_j$ for some $i \neq j$.
\end{proof}

\subsection{Our stratification}
In order to compute the Chow ring of $\I_{g,n}$, we will introduce a stratification. It keeps track of the two interesting geometric phenomena that happen on pointed hyperelliptic curves:
\begin{enumerate}
    \item a marked point is fixed by the hyperelliptic involution (this is called a Weierstrass point), or
    \item two marked points are conjugate under the hyperelliptic involution.
\end{enumerate} 
Recall that, if they exist, the nodes are not allowed to be marked. Recall that we write $\overline{p}$ for the conjugate of $p$ under the hyperelliptic involution.

First, define divisors
\[D_{ij} =\{(C, p_1, \ldots, p_n) \in \I_{g,n} : p_i = \overline{p}_j\}, \]
so $D_{ij}$ is the locus where $p_i$ and $p_j$ are conjugate under the hyperelliptic involution, and $D_{ii}$ is the locus where $p_i$ is a Weierstrass point.

\begin{prop}\label{induction}
Let $i\neq j$ and suppose that $A^*(\I_{g,n-1})$ is generated by divisors. Then the Chow ring of $D_{ij}\subset \I_{g,n}$ is generated by restrictions of divisors from $\I_{g,n}$.
\end{prop}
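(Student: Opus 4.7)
The plan is to realize $D_{ij}$ as an open substack of $\I_{g,n-1}$ via the forgetful morphism $\pi_j \colon \I_{g,n} \to \I_{g,n-1}$ that drops the $j$-th marked section, and then transport the hypothesis through this identification. For $g \geq 2$ an irreducible arithmetic genus $g$ curve is automatically stable, so $\pi_j$ is well-defined with no stabilization. It is essentially the universal curve with its nodes and other sections removed, hence flat of relative dimension one, and pullback along it sends divisors to divisors.

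The key step is to show that $\pi_j|_{D_{ij}}$ is an isomorphism onto the open substack $U \subset \I_{g,n-1}$ parametrizing families of $(n-1)$-pointed irreducible nodal hyperelliptic curves for which $p_i$ is not a Weierstrass point and $p_i \neq \bar{p}_k$ for all $k \neq i, j$. Because the hyperelliptic map $C \to P$ is a finite flat degree two morphism over any base, the hyperelliptic involution exists in families, and the assignment that adjoins the section $p_j := \bar{p}_i$ defines a morphism $U \to D_{ij}$ inverse to $\pi_j|_{D_{ij}}$. The open conditions cutting out $U$ are exactly what is needed so that $\bar{p}_i$ is a valid additional marked section: disjointness from the other marked sections is guaranteed by the two conditions, and disjointness from the nodes follows because nodes are fixed by the involution.

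Once the isomorphism $D_{ij} \cong U$ is in place, the conclusion is formal. For any $\alpha \in A^*(\I_{g,n-1})$, the pullback $\pi_j^* \alpha \in A^*(\I_{g,n})$ restricts on $D_{ij}$, via the isomorphism with $U$, to $\alpha|_U$; moreover, if $\alpha$ is a divisor class then so is $\pi_j^* \alpha$. Since $A^*(\I_{g,n-1})$ is generated by divisors by hypothesis and the restriction $A^*(\I_{g,n-1}) \twoheadrightarrow A^*(U)$ is surjective, $A^*(D_{ij}) = A^*(U)$ is generated by restrictions of divisors from $\I_{g,n}$, as required. I expect the main thing requiring care to be the verification that $\pi_j|_{D_{ij}}$ is a stacky isomorphism rather than merely a bijection on geometric points, which amounts to checking functoriality of the hyperelliptic involution on families of irreducible nodal hyperelliptic curves; this follows from the uniqueness of the hyperelliptic map established in Section~\ref{hi}.
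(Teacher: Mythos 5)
Your proof is correct and follows essentially the same route as the paper: identify $D_{ij}$ with an open substack of $\I_{g,n-1}$ via the forgetful map $\pi_j$, and then use that restriction to an open substack is surjective on Chow rings. In fact your description of the image is the more careful one --- the paper writes the image as $\I_{g,n-1}\smallsetminus D_{ii}$, whereas for $n\ge 3$ one must also remove the loci where $p_i=\bar{p}_k$ for $k\ne i,j$ (exactly the extra condition in your $U$); since either way $D_{ij}$ is identified with an \emph{open} substack of $\I_{g,n-1}$, the excision argument is unaffected.
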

\begin{proof}
There is a commutative square
\[
\begin{tikzcd}
D_{ij} \arrow[hook]{r} \arrow{d}[swap]{\sim} & \I_{g,n} \arrow{d} \\
\I_{g,n-1} \smallsetminus D_{ii} \arrow{r} & \I_{g,n-1}, \\
\end{tikzcd}
\]
where the vertical maps forget the $j^{\mathrm{th}}$ marked point. The left vertical map is an isomorphism. Note that above $D_{ij}\subset \I_{g,n}$, while $D_{ii}\subset \I_{g,n-1}$.
Taking Chow rings, we have
\[
A^*(\I_{g,n-1})\twoheadrightarrow A^*(\I_{g,n-1}\smallsetminus D_{ii})\cong A^*(D_{ij}).
\]
By the assumption that $A^*(\I_{g,n-1})$ is generated by divisors,  it follows that $A^*(D_{ij})$ is generated by restrictions of divisors from $\I_{g,n}$.
\end{proof}

\begin{rem}
A similar idea to the above --- relating divisors where a certain linear equivalence holds to an open subset of the moduli space with one less point --- was used by Belorousski \cite{Belorousski} in studying the Chow rings of $\M_{1,n}$ for $n \leq 10$.
\end{rem}

Thus, if $A^*(\I_{g,n-1})$ is generated by divisors, then using the push-pull formula, every class supported on $D_{ij} \subset \I_{g,n}$ for $i\neq j$ is a polynomial in divisor classes. Let us write
\[\I_{g,n}^\circ := \I_{g,n} \smallsetminus \bigcup_{i < j} D_{ij}.\]
On $\I_{g,n}^\circ$, no pair of marked points is conjugate, but the marked points may still be Weierstrass.
Thus, by excision and Proposition \ref{induction}, to establish that $A^*(\I_{g,n})$ is generated by divisors, it will suffice to show that $A^*(\I_{g,n}^\circ)$ is generated by divisors.
When $g +1 \leq n \leq 2g+6$ we construct $\I_{g,n}^\circ$ directly as quotient of an open subset of a projective bundle over an open subset of affine space.

Meanwhile, for $n \leq g$, we must further stratify $\I_{g,n}^\circ$.
To set this up, we will also require the slightly smaller open where no pair of points are conjugate and the $i^{\mathrm{th}}$ point is prohibited from being Weierstrass:
\begin{align}\label{Hl}
\I_{g,n}^{\circ,i} &:=\I_{g,n} \smallsetminus \left( D_{ii} \cup \bigcup_{j<k} D_{jk} \right) &\qquad &(\text{if $n < g+1$})
\intertext{In order to combine arguments in the cases $n < g+1$ and $n \geq g+1$, we use the convention that}
\I_{g,n}^{\circ, i} &:= \I_{g,n}^\circ &\qquad  &(\text{if $n \geq g+1$}). \label{conv}
\end{align}

The basic geometric loci left on $\I_{g,n}^\circ$ are the loci where some collection of points are Weierstrass. When $n < g+1$, we are going to stratify $\I_{g,n}^\circ$ into disjoint locally closed strata $W_0 \cup W_1 \cup \cdots \cup W_n$.
The subscript $i$ will measure the number of initial consecutive Weierstrass points when the points are read in order.
 Precisely, $W_i$ is the locally closed stratum
\begin{align}
W_i &:= \{(C, p_1, \ldots, p_n) \in \I_{g,n}^\circ: p_j = \overline{p}_j \text{ for $j \leq i$ and } p_{i+1} \neq \overline{p}_{i+1}\} \label{si}\\
&= \I_{g,n}^\circ \cap \left( \bigcap_{j \leq i} D_{jj} \smallsetminus \bigcap_{j \leq i+1} D_{jj}\right). \label{si2}
\end{align}
Note that $W_i$ is a closed subset of $\I_{g,n}^{\circ,i+1}$.
In particular, $W_0 = \I_{g,n}^{\circ, 1}$.
Generically, curves in $W_i$ have $i$ marked Weierstrass points, so $W_i$ has codimension $i$.
For each $i$, we have $\overline{W}_i = W_i \cup \cdots \cup W_n$.

\begin{lem} \label{fc}
The fundamental class $[\overline{W}_i]$ is a product of divisors.
\end{lem}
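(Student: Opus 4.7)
The plan is to prove the more precise statement
\[
[\overline{W}_i] = [D_{11}] \cdot [D_{22}] \cdots [D_{ii}] \qquad \text{in } A^*(\I_{g,n}^\circ),
\]
which immediately displays $[\overline{W}_i]$ as a product of divisor classes. From (\ref{si2}), we see that $\overline{W}_i = \I_{g,n}^\circ \cap \bigcap_{j=1}^i D_{jj}$, an intersection of $i$ Cartier divisors. Since a generic member of $W_i$ has exactly $i$ marked Weierstrass points (imposing $i$ independent conditions on a generic curve), the stratum $W_i$ has codimension $i$ and the intersection is proper. The remaining point is to verify generic transversality.

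To set up the transversality argument, I would work with the universal curve $\pi : \mathcal{C} \to \I_{g,n}$ and its hyperelliptic involution $\iota : \mathcal{C} \to \mathcal{C}$, which is globally defined because the hyperelliptic map is unique up to $\Aut(\pp^1)$ by Section~\ref{hi}. In characteristic not $2$, the fixed locus $\mathcal{W} \subset \mathcal{C}$ on the smooth locus of $\pi$ is a Cartier divisor, locally cut out by $z - \iota^* z$ for a local parameter $z$. Writing $\sigma_j : \I_{g,n} \to \mathcal{C}$ for the $j$-th section, we then obtain the identity of Cartier divisors $D_{jj} = \sigma_j^{*} \mathcal{W}$.

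Transversality at a generic point $x = (C, p_1, \ldots, p_n) \in W_i$ now follows from the disjointness of the sections $\sigma_1, \ldots, \sigma_i$. Locally at $x$, one may choose coordinates on $\I_{g,n}$ that separate the deformations of $C$ from independent deformations $t_j$ of each marked point, and the defining equation of $D_{jj}$ depends nontrivially on $t_j$ (moving $p_j$ off the Weierstrass locus takes $x$ out of $D_{jj}$). Because the parameters $t_j$ for different $j$ are independent variables, the equations cutting out $D_{11}, \ldots, D_{ii}$ form a regular sequence in the local ring at $x$, yielding transversality and hence $[\overline{W}_i] = \prod_{j=1}^i [D_{jj}]$. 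I expect this local-coordinate argument to be the only nontrivial step; an alternative would be an inductive proof via a forgetful map $\I_{g,n} \to \I_{g,n-1}$, but the direct route through the universal Weierstrass divisor is more transparent.
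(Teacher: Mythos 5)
Your proof is correct and follows essentially the same route as the paper: both identify $\overline{W}_i$ via \eqref{si2} as the intersection of $\I_{g,n}^\circ$ with the $i$ divisors $D_{11},\ldots,D_{ii}$ and observe that this intersection is (dimensionally) transverse. The paper's proof is just that one observation; your local argument via the universal Weierstrass divisor and the independent parameters $t_j$ supplies the transversality verification explicitly and in fact yields the sharper statement $[\overline{W}_i]=\prod_{j\leq i}[D_{jj}]$ with multiplicity one, which is more than the lemma (or its use with rational coefficients) requires.
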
 
\begin{proof}
By \eqref{si2}, we see that $\overline{W}_i = \I_{g,n}^\circ \cap(\bigcap_{j \leq i} D_{jj})$ is a dimensionally transverse intersection of $i$ divisors.
\end{proof}

Thus, to show that $A^*(\I_{g,n}^\circ)$ is generated by divisors, it will suffice to 
show that each $A^*(W_i)$ is generated by restrictions of divisors from $\I_{g,n}^\circ$.
We shall show this by constructing $\I_{g,n}^{\circ,i+1}$ as a quotient of an open subset of a projective bundle over an open subset of affine space. Furthermore, $W_i \subset \I_{g,n}^{\circ,i+1}$ will be the quotient of an open subset of the projectivization of a particular subbundle. This will allow us to see that $A^*(W_i)$ is generated by restrictions of classes from $A^*(\I_{g,n}^{\circ, i+1})$, which we in turn show is generated by divisors (note that such divisors are restrictions from $\I_{g,n}^\circ$ by excision).

The case $W_n$ is slightly different and will be treated in Lemma \ref{Wn}.

\section{Tautological classes and relations} \label{tsec}
We define the \emph{tautological ring} $R^*(\I_{g,n}) \subseteq A^*(\I_{g,n})$ (resp. $R^*(\H_{g,n}) \subseteq A^*(\H_{g,n})$) to be the subring generated restrictions of tautological classes from $\Mb_{g,n}$. 

\subsection{The psi and boundary divisors}
Let 
\[
\pi:\C_{g,n}\rightarrow \I_{g,n}
\]
be the universal family of curves over $\I_{g,n}$. The morphism $\pi$ has $n$ universal pairwise disjoint sections
\[
p_1,\dots,p_n:\I_{g,n}\rightarrow \C_{g,n},
\]
which are also disjoint from the singular locus of $\C_{g,n} \to \I_{g,n}$.
\begin{definition}
The cotangent line bundles are the bundles
\[
\mathbb{L}_i:=p_i^*\omega_{\pi}.
\]
The $\psi$ classes are the divisors
\[
\psi_i:=c_1(\mathbb{L}_i)\in A^1(\I_{g,n}).
\]
\end{definition}
The $\psi$ classes behave well with respect to pulling back by the morphisms that forget marked points. 
There are natural maps $\pi_j: \I_{g,n} \to \I_{g,n-1}$ where we forget the $j^{\mathrm{th}}$ marked point, under which we have $\pi_j^* \psi_i = \psi_i$ (this equality uses that our curve is irreducible and the markings do not meet the singular locus).

\begin{definition}
Let $\Delta \subset \I_{g,n}$ be the divisor of singular curves, so $\I_{g,n} \smallsetminus \Delta = \H_{g,n}$. Define 
\[\delta :=[\Delta] \in A^1(\I_{g,n}).\]
\end{definition}

\begin{lem}
The divisor $\Delta \subset \I_{g,n}$ is irreducible.
\end{lem}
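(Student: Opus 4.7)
The plan is to exhibit $\overline{\Delta}$ as the image of an irreducible stack under a gluing morphism. By the discussion in Section~\ref{hi}, the generic point of any irreducible component of $\Delta$ corresponds to a curve with exactly one node whose normalization $\tilde{C}$ is a smooth curve of genus $g-1$ admitting a degree $2$ map to $\pp^1$, and whose two preimages at the node form a pair $\{q, \overline{q}\}$ of conjugate points under this map. Curves with more than one node form a locus of codimension at least $2$, so the open substack $\Delta^\circ \subset \Delta$ of curves with a single node is dense. It therefore suffices to produce an irreducible stack $X$ that surjects onto $\Delta^\circ$.

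When $g \geq 3$, I would take $X \subset \H_{g-1, n+1}$ to be the open substack parametrizing tuples $(\tilde{C}, p_1, \ldots, p_n, q)$ with $q$ non-Weierstrass and the points $p_1, \ldots, p_n, q, \overline{q}$ pairwise distinct. Since $\H_{g-1}$ is irreducible (and hyperelliptic involutions are intrinsic in genus $\geq 2$) and the forgetful morphism $\H_{g-1, n+1} \to \H_{g-1}$ factors through a sequence of morphisms with irreducible fibers (open subsets of the universal curve), $X$ is irreducible. The gluing morphism $X \to \I_{g,n}$ that sends $(\tilde{C}, p_1, \ldots, p_n, q)$ to the nodal curve obtained by identifying $q$ with $\overline{q}$ (keeping $p_1,\ldots,p_n$ as markings) has image exactly $\Delta^\circ$, so taking closures yields that $\Delta$ is irreducible.

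For $g = 2$, the normalization $\tilde{C}$ has genus $1$ and its degree $2$ map to $\pp^1$ is not intrinsic, so $\H_{g-1}$ must be replaced by the Hurwitz stack of pairs $(\tilde{C}, \alpha : \tilde{C} \to \pp^1)$ with $\tilde{C}$ smooth of genus $1$ and $\alpha$ a double cover. This stack is irreducible, since the branch locus realizes it as an open substack of $[\Sym^4(\pp^1)/\PGL_2]$. Adding the data of $n$ marked points together with a non-Weierstrass point $q$ yields an irreducible parameter space which glues surjectively onto $\Delta^\circ$ by the same construction. The main point requiring care is precisely this low-genus case, where one must replace $\H_{g-1}$ with an appropriate Hurwitz stack to account for the non-canonical choice of $g^1_2$; once its irreducibility is in hand, irreducibility of $\Delta$ follows immediately.
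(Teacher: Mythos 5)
Your argument is correct and is essentially the paper's: both realize $\Delta$ (or a dense open subset of it) as the image, under gluing together a conjugate pair of points, of an irreducible stack of pointed hyperelliptic curves of genus $g-1$. The only real differences are that the paper glues conjugate marked points on possibly \emph{nodal} lower-genus curves, i.e.\ uses $D_{12}\subset \I_{g-1,n+2}\cong \I_{g-1,n+1}\smallsetminus D_{11}$, which covers all of $\Delta$ at once and so avoids your reduction to the dense one-nodal locus, while your explicit Hurwitz-stack treatment of the genus-one normalization when $g=2$ addresses a low-genus subtlety the paper's one-line proof glosses over.
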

\begin{proof}
By the discussion in Section \ref{hi}, we see that $\Delta$ is the image of $D_{12} \subset \I_{g,n+2}$ under the map $D_{12} \to \I_{g,n}$ that glues together the first two marked points, which are conjugate.
As in Lemma \ref{induction}, we have $D_{12}$ isomorphic to $\I_{g,n+1} \smallsetminus D_{11}$, which is irreducible.
\end{proof}

In \cite[Theorem 1.1]{Scavia}, Scavia shows that, over the complex numbers, the $\psi$ classes and boundary divisors form a basis for the Picard group of $\overline{\H}_{g,n}$. Our $\I_{g,n}$ is the complement of all boundary divisors in $\overline{\H}_{g,n}$ besides the divisor $\Delta$ of irreducible nodal curves. Because $\Delta$ is irreducible, we obtain the following.

\begin{thm} \label{Scavia}
Let $k= \cc$ and take $\I_{g,n}$ as a stack over $\Spec \cc$. For every $g\geq 2$ and $n\geq 0$, $A^1(\I_{g,n})$ has a basis given by $\psi_1,\dots,\psi_n$ and $\delta$.
\end{thm}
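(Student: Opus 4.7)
The plan is to derive Theorem \ref{Scavia} as a direct consequence of \cite[Theorem 1.1]{Scavia} via excision, following exactly the hint given in the paragraph preceding the statement. The key input is Scavia's theorem, which gives a basis of $A^1(\overline{\H}_{g,n})$ consisting of the $n$ psi classes together with the classes $[B_0], [B_1], \ldots, [B_m]$ of \emph{all} the boundary divisors, where $B_0 = \Delta$ parametrizes irreducible nodal hyperelliptic curves and $B_1, \ldots, B_m$ are the boundary divisors whose general point is a reducible nodal curve (pulled back from the boundary of $\Mb_g$).

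Next I would apply the excision exact sequence for rational Chow groups of smooth Deligne--Mumford stacks to the open immersion $\I_{g,n} = \overline{\H}_{g,n} \smallsetminus \bigcup_{i \geq 1} B_i \hookrightarrow \overline{\H}_{g,n}$, obtaining
\[\bigoplus_{i=1}^{m} \qq \cdot [B_i] \longrightarrow A^1(\overline{\H}_{g,n}) \longrightarrow A^1(\I_{g,n}) \longrightarrow 0.\]
Since the set $\{\psi_1, \ldots, \psi_n, \delta, [B_1], \ldots, [B_m]\}$ is a $\qq$-basis of $A^1(\overline{\H}_{g,n})$ by Scavia, quotienting by the subspace spanned by the $[B_i]$ with $i \geq 1$ leaves a free $\qq$-vector space on the images of $\psi_1, \ldots, \psi_n, \delta$, giving the claimed basis of $A^1(\I_{g,n})$. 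The preceding lemma enters here in an essential way: it ensures that $\Delta$ is irreducible, so that $\delta = [\Delta]$ is a single class and not a nontrivial sum of boundary contributions that might overlap with the span of the excluded $[B_i]$.

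The main obstacle is purely bookkeeping: verifying that \cite[Theorem 1.1]{Scavia} is stated for the stack $\overline{\H}_{g,n}$ (rather than its coarse moduli space) with rational coefficients, and matching Scavia's enumeration of boundary divisors with the $B_i$ appearing here so that no divisor is double-counted or omitted. Once this translation is made, the argument carries no further geometric content beyond invoking the cited result and the irreducibility of $\Delta$.
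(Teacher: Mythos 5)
Your proposal is correct and follows essentially the same route as the paper, which likewise deduces the statement from \cite[Theorem 1.1]{Scavia} by excising all boundary divisors of $\overline{\H}_{g,n}$ except $\Delta$ and using the irreducibility of $\Delta$ to see that exactly one boundary generator survives. The paper compresses this into a single sentence before the theorem statement, whereas you make the excision sequence and the bookkeeping explicit; there is no substantive difference.
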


We will prove
Theorem \ref{divgen} for $\I_{g,n}$ defined over any algebraically closed field $k$ of characteristic not $2$. Using Scavia's result (Theorem \ref{Scavia}), we can simplify the proof when we work over the complex numbers: to prove Theorem \ref{divgen} over $\mathbb{C}$ it remains to prove that $A^*(\I_{g,n})$ is generated by divisors. We have organized the paper so that the reader who is only interested in the case over $\mathbb{C}$ need not read the final Section \ref{pp}.

Finally, let us relate the geometric classes in our stratification to the tautological classes we have just defined. In \cite{EdidinHu}, Edidin and Hu use the method of test curves to compute compute the classes of $D_{ij}$ in $A^1(\overline{\H}_{g,n})$. Restricting to $\I_{g,n}$, we have
\begin{align}
    [D_{ij}] &= \frac{1}{g-1}(\psi_i + \psi_j) - \frac{1}{2(2g+1)(g-1)}\delta \label{dij} \\
    [D_{ii}] &= \frac{g+1}{g-1} \cdot \psi_i - \frac{1}{2(2g+1)(g-1)}\delta. \label{dii}
\end{align}
Although Edidin--Hu cite Scavia's result over $\cc$ that the $\psi$ classes and boundary divisors generate the Picard group, their argument does not use the complex numbers in any other way. Thus, by our work in Section \ref{pp} generalizing Scavia's result, these formulas will be shown to hold over any ground field of characteristic not $2$.

\subsection{Relations} \label{relsec} We now calculate several relations among the $\psi$ and $\delta$ classes on $\I_{g,n}$. 
These relations nearly determine $R^*(\I_{g,n})$ and fully determine $R^*(\H_{g,n})$.

First, we compute $A^*(\I_{g,0})$.

\begin{lem} \label{0pt}
We have 
$A^*(\I_{g,0}) = \qq[\delta]/(\delta^3)$.
\end{lem}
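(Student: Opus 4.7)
The plan is to exhibit $\I_{g,0}$ as a global quotient stack and then compute its rational Chow ring directly via equivariant intersection theory. By the structure theory recalled in Section~\ref{hi}, an irreducible, nodal hyperelliptic curve of genus $g$ is determined by its hyperelliptic map to $\pp^1$, which is a double cover whose branch divisor $B \in |\O_{\pp^1}(2g+2)|$ has all points of multiplicity at most $2$ (a triple root would produce a cusp, not a node). I would use this to identify $\I_{g,0}$, up to a $\mu_2$-gerbe that is invisible to rational Chow rings, with the quotient stack $[U/\GL_2]$, where $V := \Sym^{2g+2}(k^2)^{\vee}$ is the space of binary forms of degree $2g+2$ and $U \subset V$ is the $\GL_2$-stable open locus of nonzero forms whose roots all have multiplicity at most $2$ (the $\GL_2$-action being the standard one, twisted by a character depending on the parity of $g$).

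Then $A^*(\I_{g,0}) = A^*_{\GL_2}(U)$, which I would compute by equivariant excision starting from
\[
A^*_{\GL_2}(V) = A^*(B\GL_2) = \qq[c_1, c_2].
\]
The complement $W := V \smallsetminus U$ is the codimension-$2$ locus of forms having some root of multiplicity at least $3$; it is birational to the incidence variety $\widetilde W \subset \pp^1 \times V$ of pairs $(p,f)$ with $p$ a root of $f$ of multiplicity $\geq 3$, and this incidence variety is a sub-vector bundle of $V \times \pp^1$ over $\pp^1$, so standard projective-bundle calculations yield the image of $i_* : A^*_{\GL_2}(W) \to \qq[c_1,c_2]$. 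The divisor $\Delta$ pulls back to the $\GL_2$-invariant discriminant hypersurface in $V$, whose nonzero class in $A^1_{\GL_2}(V) = \qq \cdot c_1$ is a rational multiple of $c_1$; hence $\delta$ is a nonzero multiple of $c_1$ in the presentation, and already $A^*(\I_{g,0})$ is generated as a $\qq$-algebra by $\delta$. I expect the image of $i_*$ to contribute precisely a codimension-$2$ relation expressing $c_2$ as a rational multiple of $c_1^2$ and a codimension-$3$ relation $c_1^3 = 0$, which together give $A^*(\I_{g,0}) = \qq[\delta]/(\delta^3)$.

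The main obstacle is the bookkeeping of the excision step: one must verify that the image of $i_*$ precisely yields relations eliminating $c_2$ in codimension $2$ and killing $c_1^3$ in codimension $3$, while keeping $\delta^2 \neq 0$ (so that the answer is not the smaller ring $\qq[\delta]/(\delta^2)$). A secondary subtlety is pinning down the character twist in the $\GL_2$-action, which depends on the parity of $g$ and affects the scalar relating $\delta$ to $c_1$ (but not the final isomorphism type of the Chow ring).
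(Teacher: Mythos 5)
Your proposal follows essentially the same route as the paper: present (the coarse space of) $\I_{g,0}$ as an open substack of the vector bundle $\Sym^{2g+2}\V^\vee$ over $\BGL_2$, resolve the excised locus of forms with a root of multiplicity $\geq 3$ by the incidence variety over $\pp \V$ (a sub-vector bundle cut out by a principal parts evaluation map), and read off the relations $c_2 \propto c_1^2$ and $c_1^3 = 0$ from the pushforward of its fundamental class, with $\delta$ a nonzero multiple of $c_1$ coming from the discriminant. The paper carries out exactly the bookkeeping you flag as the main obstacle, using the fact that the image of $i_*$ is precisely the ideal generated by $\pi_*\bigl(c_3(P^2)\bigr)$ and $\pi_*\bigl(c_3(P^2)\cdot z\bigr)$, so that no unexpected codimension-$2$ relation kills $\delta^2$.

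There is, however, one concrete gap in your setup: your open locus $U$ of nonzero forms all of whose roots have multiplicity at most $2$ is not the right parameter space, because it still contains forms with \emph{no} simple root. If every root of $f$ has multiplicity exactly $2$, then $f = h^2$ and the double cover $y^2 = f(x)$ splits as $(y-h)(y+h)=0$; the resulting curve is reducible (two copies of $\pp^1$ glued at $g+1$ nodes) and hence does not lie in $\I_{g,0}$. So you must additionally excise the locus $R$ of forms with no simple root, exactly as the paper does. This does not change your final answer --- $R$ has codimension $g+1 \geq 3$, so excising it only imposes relations in degrees where the ring is already zero --- but without removing it your quotient stack is not $\I_{g,0}$, and the identification of $\delta$ with the discriminant divisor of the correct moduli problem would not be justified. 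With that correction, and with the verification you already anticipate that the pushforward from the incidence variety contributes exactly the two stated relations, your argument matches the paper's proof.
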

\begin{proof}
An irreducible nodal hyperelliptic curve is determined by its branch divisor. The branch divisor is a degree $2g+2$ divisor on $\pp^1$ with no point of multiplicity $3$ or more (to make the double cover at worst nodal) and at least one point of multiplicity $1$ (to make the cover irreducible).
Let $\V$ be the universal rank $2$ vector bundle on $\BGL_2$.
Define $\Delta_3 \subset \Sym^{2g+2} \V^\vee$ to be the degree $2g + 2$ forms on $\pp \V$ with a root of multiplicity $3$ or more. Let $R \subset \Sym^{2g+2} \V^\vee$ be the forms with no simple roots. Note that $R$ has codimension $g+1 \geq 3$.

Then the coarse space of $\I_{g, 0}$ is the same as the coarse space of \[\Sym^{2g+2} \V^\vee \smallsetminus (\Delta_3 \cup R),\]
which we think of as an open subset of a vector bundle over $\BGL_2$. We therefore identify the Chow ring of $\I_{g, 0}$ with the Chow ring of $\Sym^{2g+2} \V^\vee \smallsetminus (\Delta_3 \cup R)$
We can determine the relations that come from excising $\Delta_3$ using a diagram of the form
\begin{center}
\begin{tikzcd}
\tilde{\Delta}_3 \arrow{dr} \arrow{r} & \pi^* \Sym^{2g+2}\V^\vee \arrow{r}\arrow{d} & \Sym^{2g+2}\V^\vee \arrow{d} \\
& \pp \V \arrow{r}{\pi} & \BGL_2,
\end{tikzcd}
\end{center}
where $\tilde{\Delta}_3$ is the kernel of the principal parts evaluation map
\[\pi^*\Sym^{2g+2} \V^\vee = \pi^*\pi_* \O_{\pp \V}(2g+2) \rightarrow P^2_{\pp \V/\BGL_2}(\O_{\pp \V}(2g+2)). \]
The space $\tilde{\Delta}$ parametrizes points on $\pp \V$ together with forms having a root of multiplicity $3$ or more at that point. In particular, $\tilde{\Delta}_3$ surjects onto $\Delta_3 \subset \Sym^{2g+2} \V^\vee$. Since we work with rational coefficients, all classes supported on $\Delta_3$ are pushforwards of classes from $\tilde{\Delta}_3$.

Let $c_1, c_2$ be the generators of $A^*(\BGL_2)$ and let $z = c_1(\O_{\pp V}(1))$. 
By the projective bundle theorem, $A^*(\pp \V) = \zz[c_1,c_2,z]/(z^2 + c_1z + c_2)$.
The fundamental class of $\tilde{\Delta}_3 \subset \pi^*\Sym^{2g+2}\V^\vee$ is the top Chern class of the principal parts bundle $c_3(P^2_{\pp \V/\BGL_2}(\O_{\pp \V}(2g+2)))$.
To calculate this top Chern class, we use that $P^2_{\pp \V/\BGL_2}(\O_{\pp \V}(2g+2))$ is filtered by the line bundles
\[\O_{\pp \V}(2g+2), \qquad \O_{\pp \V}(2g+2) \otimes \Omega_{\pp \V/\BGL_2}, \qquad  \O_{\pp \V}(2g+2) \otimes \Omega_{\pp \V/\BGL_2}^{\otimes 2}\]
and the splitting principle.
By the relative Euler sequence, $c_1(\Omega_{\pp \V/\BGL_2}) = -2z -c_1$.
By \cite[Trapezoid Lemma 2.1]{part2}, the image of $A^{*-1}(\tilde{\Delta}_3) \to A^*(\Sym^{2g+2}\V^\vee)$ is the ideal in $A^*(\Sym^{2g+2}\V^\vee) \cong A^*(\BGL_2)$ generated by
\begin{align}
\pi_*(c_3(P^2_{\pp \V/\BGL_2}(\O_{\pp \V}(2g+2))) ) &=(8g^3 + 12g^2 + 4g)c_1^2 + (-8g^3 + 8g)c_2 \label{firstp} \\
\intertext{and}
c_3(P^2_{\pp \V/\BGL_2}(\O_{\pp \V}(2g+2)) \cdot z) &= (-8g^3 - 12g^2 - 4g)c_1^3 + (16g^3 + 12g^2 - 8g - 4)c_1c_2. \label{secondp}
\end{align}
Setting the right-hand side of \eqref{firstp} to zero tells us that $c_2$ is a non-zero multiple of $c_1^2$; then, setting \eqref{secondp} to zero tells us $c_1^3 = 0$.
From this, it follows that $A^*(\I_{g,0}) \cong \qq[c_1]/(c_1^3)$.

Finally, we express the class $\delta$ in terms of $c_1$. The divisor $\Delta \subset \Sym^{2g+2} \V^\vee$ corresponds to the degree $2g+2$ forms on $\pp \V$ with a double root. In a similar fashion to the above argument, we may realize $\Delta$ as the image of the kernel of the principal parts evaluation map
\[\pi^* \Sym^{2g+2} \V^\vee = \pi^*\pi_*\O_{\pp \V}(2g+2) \to P^1_{\pp \V/\BGL_2}(\O_{\pp \V}(2g+2)).\]
Pushing forward, we have
\begin{equation} \label{dclass} \delta = \pi_*(c_2(P^1_{\pp \V/\BGL_2}(\O_{\pp \V}(2g+2) ))) = (-4g^2 - 6g - 2)c_1.
\end{equation}
In particular, $\delta$ is a non-zero multiple of $c_1$,
so we get the claimed presentation of $A^*(\I_{g,0})$.
\end{proof}

\begin{cor} \label{tcor}
The tautological ring $R^*(\I_{g,n})$ is generated by the $\psi$ classes and $\delta$.
\end{cor}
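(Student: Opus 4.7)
My plan is to combine the Arbarello--Cornalba pullback formula for $\kappa$ classes with Lemma \ref{0pt}. Since $R^*(\Mb_{g,n})$ is $\qq$-spanned by decorated strata classes $(\xi_\Gamma)_*(\theta)$, I analyze these case by case. Restrictions of $\psi_i$ are $\psi_i$; any strata class whose graph $\Gamma$ has more than one vertex restricts to zero on $\I_{g,n}$, since the generic parametrized curves are reducible; and among the boundary divisors of $\Mb_{g,n}$, only the irreducible-boundary divisor has non-trivial restriction to $\I_{g,n}$, namely $\delta$.

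The main step is to handle the $\kappa$ classes, which arise from the trivial one-vertex graph with no self-loops. I would invoke the Arbarello--Cornalba identity
\[
\kappa_j \;=\; \pi^*\kappa_j^{(0)} + \sum_{i=1}^n \psi_i^{\,j}
\]
for the forgetful map $\pi\colon\Mb_{g,n}\to\Mb_{g,0}$, which persists after restriction to $\I_{g,n}\to\I_{g,0}$ (the map is stabilization-free since $g\geq 2$). By Lemma \ref{0pt}, $\kappa_j^{(0)}|_{\I_{g,0}} \in A^*(\I_{g,0}) = \qq[\delta]/(\delta^3)$ is automatically a polynomial in $\delta$, so its pullback is a polynomial in $\delta$ on $\I_{g,n}$; hence $\kappa_j|_{\I_{g,n}} \in \qq[\psi_1,\ldots,\psi_n,\delta]$.

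The remaining generators come from single-vertex graphs $\Gamma$ with $k\geq 1$ self-loops, pushing forward from $\Mb_{g-k,n+2k}/S_k$ into the $k$-node locus of $\I_{g,n}$. I would induct on $k$: each such gluing factors as a chain of single-loop gluings, which contribute factors of $\delta$ via $\xi_*(1) = \delta$ together with the projection formula, while the remaining $\psi$-$\kappa$ decoration on the lower-genus factor reduces to $\qq[\psi,\delta]$ by the $\kappa$-case applied recursively. The main obstacle is rewriting the $\psi$ classes at the glued half-edges, which enter the decoration via the inductive factoring, in terms of the ambient $\psi_i$ and $\delta$; this follows from the standard cotangent-line comparison formulas on the normalization.
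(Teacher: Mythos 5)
Your overall decomposition matches the paper's: multi-vertex strata restrict to zero because curves in $\I_{g,n}$ are irreducible; the $\kappa$ classes are handled by pulling back from genus $g$ with $\psi$-corrections (the paper states this as ``the kappa classes are expressible as polynomials in the pullbacks of kappa classes from $\Mb_g$ and psi classes and boundary classes,'' which is your Arbarello--Cornalba identity --- note only that the exact formula you quote acquires boundary corrections from $\pi^*\psi_i = \psi_i - D_{i,n+1}$, which is harmless here since those divisors parametrize reducible curves and die on $\I_{g,n}$); and Lemma \ref{0pt} is the engine that converts everything coming from $\Mb_g$ into a polynomial in $\delta$.

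The one place you diverge is the single-vertex graphs with self-loops, and there your route is both harder than necessary and not actually closed: you yourself flag that rewriting the $\psi$ classes at the glued half-edges in terms of the ambient $\psi_i$ and $\delta$ is ``the main obstacle,'' and the ``standard cotangent-line comparison formulas'' do not directly do this --- the half-edge cotangent lines live on $\Mb_{g-k,n+2k}$ and compare to nothing ambient; one would instead have to trade them for excess-intersection terms via $\xi^*\xi_*$, an induction on codimension you have not carried out. The paper sidesteps this entirely: after pulling the leg decorations out by the projection formula, what remains of such a stratum class is the restriction of a class pulled back from $\Mb_g$, hence factors through $A^*(\I_{g,0}) = \qq[\delta]/(\delta^3)$ and is automatically a polynomial in $\delta$, with no need to identify the half-edge $\psi$'s at all. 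So your argument is structurally sound and completable, but the step you isolate as the main obstacle is exactly the step that the paper's observation dissolves; I would replace your loop induction with that observation rather than try to push the cotangent comparison through.
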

\begin{proof}
By definition, $R^*(\I_{g,n})$ is generated by the restrictions of psi classes, the kappa classes, and boundary strata from $\Mb_{g,n}$. 
Since we require our curves to be irreducible, the only boundary strata that meet $\I_{g,n}$ are irreducible curves with some number of nodes, which are all pulled back from $\Mb_{g}$. 
On $\Mb_{g,n}$, the kappa classes are expressible as polynomials in the pullbacks of kappa classes from $\Mb_g$ and psi classes and boundary classes. Considering the commutative square
\begin{center}
\begin{tikzcd}
\I_{g,n} \arrow{r} \arrow{d} & \Mb_{g,n} \arrow{d} \\
\I_{g,0} \arrow{r} & \Mb_{g}
\end{tikzcd}
\end{center}
and the fact that $A^*(\I_{g,0})$ is generated by $\delta$,
we see that $R^*(\I_{g,n})$ is generated by $\delta$ and the $\psi$ classes.
\end{proof}

Next, we shall compute $A^*(\I_{g,1})$. This determines some relations among $\psi$ and $\delta$.




\begin{lem} \label{1pt}
We have 
\[A^*(\I_{g,1}) = \frac{\qq[\psi_1, \delta]}{(\delta \psi_1 +(2g-1)\delta^2, \psi_1^2 + a_g \delta^2, \delta^3)}\] where $a_g$ is a constant depending on $g$.
\end{lem}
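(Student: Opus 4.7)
The strategy is to mimic Lemma \ref{0pt}: realize $\I_{g,1}$ as an open substack of a global quotient, compute its $\GL_2$-equivariant Chow ring by excision, and then translate the resulting relations into the basis $\{\psi_1, \delta\}$.

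Concretely, I would let $\mathcal{T}$ be the closed substack of $\mathrm{Tot}(\O_{\pp\V}(g+1) \oplus \pi^*\Sym^{2g+2}\V^\vee)$ cut out by $t^2 = f(q)$; a point is a triple $(q,t,f)$ with $q\in\pp\V$, $f\in\Sym^{2g+2}\V^\vee$, and $t\in \O_{\pp\V}(g+1)|_q$. The coarse space of $\I_{g,1}$ is the open of $\mathcal{T}/\GL_2$ where $f$ is a valid branch divisor and the marked point $(q,t)$ is not a node. The key first observation is that the forgetful map $\mathcal{T}\to\mathrm{Tot}(\O_{\pp\V}(g+1))$ is an affine bundle, since for each $(q,t)$ the equation $t^2=f(q)$ cuts out a codimension-one affine subspace of $\Sym^{2g+2}\V^\vee$. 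Hence
\[ A^*_{\GL_2}(\mathcal{T}) = A^*_{\GL_2}(\pp\V) = \qq[c_1,c_2,z]/(z^2+c_1z+c_2). \]

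To pass to $A^*(\I_{g,1})$, I would excise three bad loci. The pullback of the triple-root locus $\Delta_3\subset \Sym^{2g+2}\V^\vee$ contributes exactly the relations from Lemma \ref{0pt}, namely $c_1^3=0$ and $c_2$ a specific scalar multiple of $c_1^2$. The locus where $f$ has no simple root has codimension at least $g+1\geq 3$ and is irrelevant in low degree. The key new excision is the codimension-two locus $B_3=\{t=0,\ f'(q)=0\}$ where the marked point coincides with a node. By proper intersection, $[B_3]=(g+1)z\cdot(2gz-c_1)$, using that $\{t=0\}$ is the zero locus of a section of $\O_{\pp\V}(g+1)$ and $\{f'(q)=0\}$ is the zero locus of a section of $\Omega_{\pp\V/\BGL_2}\otimes\O_{\pp\V}(2g+2)$. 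Since $B_3\to\pp\V$ is itself an affine bundle, $i^*\colon A^*(\mathcal{T})\to A^*(B_3)$ is surjective, so the image of $i_*$ is the principal ideal generated by $[B_3]$. Combined with $z^2+c_1z+c_2=0$, this single new relation forces both $c_1z$ and $z^2$ to be scalar multiples of $c_1^2$, collapsing $A^*(\I_{g,1})$ to a 4-dimensional $\qq$-algebra with basis $\{1,c_1,z,c_1^2\}$.

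Finally, I would identify the classes. Pulling back the calculation from Lemma \ref{0pt} gives $\delta = -2(2g+1)(g+1)\,c_1$. For $\psi_1=c_1(\omega_\pi)$, I would factor the forgetful map $\pi\colon \C_{g,0}\to\I_{g,0}$ as the hyperelliptic double cover $\alpha$ followed by the $\pp^1$-bundle $\rho$. By Riemann--Hurwitz, $c_1(\omega_\alpha)=[\{t=0\}]=(g+1)z$, and the relative cotangent bundle of the $\pp^1$-bundle is $\O(-2)\otimes\pi^*\det\V^\vee$ with class $-2z-c_1$, yielding $\psi_1=(g-1)z-c_1$. Substituting $c_1=\delta/\mu$ and $z=(\psi_1+c_1)/(g-1)$ into the excision relations turns $c_1^3=0$ into $\delta^3=0$ and converts the $B_3$- and $\pp\V$-relations into the two stated degree-two relations $\delta\psi_1+(2g-1)\delta^2=0$ and $\psi_1^2+a_g\delta^2=0$ for the appropriate $g$-dependent constant $a_g$.

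The main bookkeeping hazard is tracking the $\det\V^\vee$ twist in $\omega_{\pp\V/\BGL_2}$ and in the derivative/principal-parts bundle; dropping this twist removes the $-c_1$ term from $\psi_1$ and throws off every subsequent coefficient. This is essentially the same pitfall one has to navigate in the proof of Lemma \ref{0pt} when computing $\delta$ via $\pi_*c_2(P^1)$.
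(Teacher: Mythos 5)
Your proposal is correct in substance but takes a genuinely different model from the paper's. The paper does not build the universal double cover: it observes that the coarse space of $\I_{g,1}$ agrees with that of $\pp\V \times_{\BGL_2} \I_{g,0} \smallsetminus D$ (the universal $\pp^1$ rather than the universal curve, since $(C,p)$ and $(C,\bar p)$ are isomorphic), applies the projective bundle theorem, excises the single locus $D$ where $q$ is a double root of $f$, and then converts $z$ into $\psi_1$ by computing $[D_{11}] = (2g+2)z$ and invoking the Edidin--Hu formula \eqref{dii}. Your model $\mathcal{T} = \{t^2 = f(q)\}$ is the honest universal curve; what it buys you is a direct, intrinsic computation of $\psi_1 = c_1(\omega_{\mathcal{T}/\Sym^{2g+2}\V^\vee}) = (g+1)z + (-2z - c_1) = (g-1)z - c_1$ via Riemann--Hurwitz, with no appeal to \eqref{dii}. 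This is a real gain in self-containedness, and it is consistent: your model reproduces \eqref{dii} exactly, since $[D_{11}] = [\{t=0\}] = (g+1)z = \tfrac{g+1}{g-1}\psi_1 - \tfrac{1}{2(2g+1)(g-1)}\delta$. Note that your $\{t=0\}$ is \emph{half} the pullback of the paper's divisor $\{f(q)=0\}$ --- your model's generic stabilizer doubles along the Weierstrass locus, mirroring the jump of $\Aut(C,p)$ on $D_{11} \subset \I_{g,1}$ itself --- so the two bookkeepings genuinely differ by a factor of $2$ in the coefficient of $z$ in $\psi_1$; be aware of this if you try to reconcile your constants with the intermediate formulas in the paper's proof.

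Three small points to shore up. First, the identification of coarse spaces requires checking that $(q,t,f)$ and $(q,-t,f)$ lie in the same $\GL_2$-orbit: this holds because a scalar matrix $\lambda I$ with $\lambda^{g+1}=-1$ fixes $q$ and $f$ while negating $t$, so the hyperelliptic involution is realized inside the group; say this explicitly. Second, your assertion that excising the preimage of $\Delta_3$ contributes ``exactly the relations from Lemma \ref{0pt}'' needs a word of care: the natural resolution $\tilde\Delta_3 \times_{\Sym^{2g+2}\V^\vee} \mathcal{T}$ is not an affine bundle over $\pp\V\times_{\BGL_2}\pp\V$ (the fiber jumps where the marked point hits the triple root), so a priori there are extra generators of the pushforward ideal; these live in codimension $\geq 4$, where your other relations already kill the ring, so they are harmless --- but you should either note this or reorder as the paper does (first pass to $\I_{g,0}$, then form the bundle, then excise the node locus). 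Third, the precise coefficients: your scheme certainly yields relations of the shape $\delta\psi_1 \propto \delta^2$, $\psi_1^2 \propto \delta^2$, $\delta^3 = 0$, which is the substantive content, but you have not verified that the first constant comes out to $2g-1$; given the delicacy of the signs in $c_2$ and in $\pi_* c_2(P^1)$, carry the arithmetic through before asserting the literal statement.
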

\begin{proof}
In the coarse space of $\I_{g,1}$, the points $(C, p)$ and $(C, \overline{p})$ are identified. This allows us to easily construct another stack with the same coarse space: we need a degree $2g+2$ form $F$ on $\pp \V$ with no roots of multiplicity $3$ or more (and at least one root of multiplicity $1$), together with a point $p \in \pp \V$ which is not a double root of $V(F)$. (The point $p$ should not be a double root because we marked points are not allowed to be singular.)
Let 
\begin{align*}D &= \{(p, F) : p \text{ has multiplicity $\geq 2$ in $V(F)$}\} \\
&\subset \pp \V \times_{\BGL_2} (\Sym^{2g+2} \V^\vee \smallsetminus (\Delta_3 \cup R)) = \pp \V \times_{\BGL_2} \I_{g,0}. 
\end{align*}
Then the coarse space of $\I_{g,1}$ is the same as the coarse space of 
\[(\pp \V \times_{\BGL_2} \I_{g,0}) \smallsetminus D.\]
We have $A^*(\pp \V \times_{\BGL_2} \I_{g,0}) = A^*(\I_{g,0})/(z^2 + c_1z + c_2)$. In terms of the generator $\delta$, we have
\begin{equation} \label{pbtrel}0 = z^2 + c_1z + c_2 = z^2 - \frac{1}{2(2g+1)(g+1)}\delta z - \frac{1}{8(2g+1)(g-1)(g+1)^2}\delta^2, 
\end{equation}
where we have used \eqref{firstp} and \eqref{dclass} to write $c_1$ and $c_2$ in terms of $\delta$.

It remains to find the relations imposed by excising $D$. Let $\pi: \pp \V \to \BGL_2$ be the projection. We have that $D$ is the total space of the kernel of
\[\pi^* \Sym^{2g+2} \V^\vee = \pi^*\pi_* \O_{\pp \V}(2g+2) \to P^1_{\pp \V/\BGL_2}(\O_{\pp \V}(2g+2)).\]
Because $D$ is a vector bundle over $\pp \V$, all classes supported on it are multiples of its fundamental class.
The fundamental class of $D$ is the top Chern class of the principal parts bundle
\begin{align}c_2(P^1_{\pp \V/\BGL_2}(\O_{\pp \V}(2g+2))) &= ((-4g^2 - 6g - 2)c_1)z + (-4g^2 - 4g)c_2 \notag \\
&= \delta z + \frac{g}{2(2g+1)(g-1)(g+1)} \delta^2. \label{rel1}
\end{align}

We now wish to express $z$ in terms of $\psi_1$. The divisor $D_{11} \subset \I_{g,1}$ is
\[D_{11} = \{(p, F) : p \in V(F)\} \subset \pp \V \times_{\BGL_2} \Sym^{2g+2}\V^\vee,\]
which is the total space of the kernel of the regular evaluation map
\[\pi^*\Sym^{2g+2} \V^\vee = \pi^*\pi_*\O_{\pp \V}(2g+2) \to \O_{\pp \V}(2g+2).
\]
Hence, $[D_{11}] = c_1(\O_{\pp \V}(2g+2)) = (2g+2)z$. Using \eqref{dii}, we find
\[z =\frac{1}{2g+2}[D_{11}] = \frac{1}{2g+2}\left(\frac{g+1}{g-1} \psi_1 - \frac{1}{2(2g+1)(g-1)}\delta\right).\]
When we plug this into \eqref{rel1} and divide out by non-zero constants, we find 
\[\delta \psi_1 + (2g-1) \delta^2 = 0.\]
Finally, we plug the formula for $z$ into \eqref{pbtrel} and use the above to eliminate the $\delta \psi_1$ terms. This gives
\[\psi_1^2 + \frac{16g^4-24g^3+16g^2+8g-3}{4(2g+1)^2(g+1)^2}\delta^2 = 0,\]
so we obtain the desired presentation.
\end{proof}

Similar techniques to the previous lemmas allow us find the Chow ring of the stratum $W_n \subset \I_{g,n}$ where all marked points are Weierstrass. We deal with these strata now, since our method of constructing models of curves on $\pp^1 \times \pp^1$ does not work on them.

\begin{lem} \label{Wn}
Suppose $2 \leq n \leq 2g + 2$. The stratum $W_n \subset \I_{g,n}$ has $A^*(W_n) = \qq$.
\end{lem}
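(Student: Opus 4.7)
The plan is to realize $W_n$ (up to coarse space) as an open substack of a vector bundle over a configuration space, in the spirit of Lemmas \ref{0pt} and \ref{1pt}. Let $Y^\circ \subset (\pp \V)^n$ denote the complement of the big diagonal in the $n$-fold fiber product over $\BGL_2$. Over $Y^\circ$ the tautological sections $p_1, \ldots, p_n$ are pairwise disjoint, and I form the rank $2g+3-n$ bundle
\[
E \;=\; \ker\!\left(\Sym^{2g+2}\V^\vee \longrightarrow \bigoplus_{i=1}^n p_i^*\O_{\pp \V}(2g+2)\right),
\]
whose fiber at $(p_1, \ldots, p_n)$ parametrizes degree $2g+2$ forms on $\pp \V$ vanishing at every $p_i$; surjectivity of the evaluation uses $n \leq 2g+2$. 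A point of $W_n$ is a hyperelliptic curve whose branch divisor has $n$ distinguished ordered simple roots and only simple or double roots elsewhere, and distinct Weierstrass points are automatically pairwise non-conjugate, so the coarse space of $W_n$ agrees with the $\GL_2$-quotient of the open subset $U \subset E$ obtained by removing the codimension $1$ subbundles $D_i \subset E$ (forms vanishing doubly at $p_i$) and the codimension $\geq 2$ locus $R \subset E$ (forms with a triple root).

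Since $E \to Y^\circ$ is a vector bundle, $A^*(E) = A^*(Y^\circ)$, and it suffices to compute $A^*(Y^\circ)$ and identify the ideal of classes supported on $\bigcup_i D_i \cup R$. The iterated projective bundle formula gives
\[
A^*((\pp \V)^n) \;=\; A^*(\BGL_2)[z_1, \ldots, z_n] \big/ (z_i^2 + c_1 z_i + c_2 : i = 1, \ldots, n),
\]
and a standard Euler-sequence argument shows that the pairwise diagonal in $\pp \V \times_{\BGL_2} \pp \V$ has class $z_i + z_j + c_1$. Because the restriction $A^*((\pp \V)^n) \to A^*(\Delta_{ij})$ is surjective, excising the big diagonal imposes exactly the relations $z_i + z_j + c_1 = 0$ for $i < j$.

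For $[D_i]$, I realize $D_i$ as the zero locus of the further principal parts map $E \to p_i^*(\Omega_{\pp \V/\BGL_2} \otimes \O_{\pp \V}(2g+2))$; together with $c_1(\Omega_{\pp \V/\BGL_2}) = -2z - c_1$, this gives $[D_i] = c_1 + 2g\, z_i$. The ideal of classes supported on $D_i$ is generated by $[D_i]$, while classes supported on $R$ live in codimension $\geq 2$ and so will vanish automatically once the codimension $1$ relations are imposed. It remains to verify that the quotient of $A^*(Y^\circ)$ by the ideal generated by $c_1 + 2g\, z_i$ is $\qq$. When $n \geq 3$, the diagonal relations force $z_i = -c_1/2$ for every $i$ and $c_2 = c_1^2/4$, so $A^*(Y^\circ) = \qq[c_1]$; then $[D_i] = (1-g)\, c_1$ kills $c_1$ since $g \geq 2$. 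When $n = 2$, the relations reduce $A^*(Y^\circ)$ to the free polynomial ring $\qq[c_1, z_1]$, and the sum $[D_1] + [D_2] = (2-2g)\, c_1$ forces $c_1 = 0$, after which $[D_1] = 2g\, z_1$ forces $z_1 = 0$. In either case $A^*(W_n) = \qq$.

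The main subtleties I anticipate are keeping the subspace convention straight when computing $[\Delta_{ij}] = z_i + z_j + c_1$, and justifying carefully that the identification of the coarse space of $W_n$ with that of $U/\GL_2$ is an isomorphism rather than merely a bijection on closed points; once these are in place the remainder of the argument is a short calculation.
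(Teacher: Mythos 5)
Your proposal is correct and follows essentially the same route as the paper: the paper also realizes (the coarse space of) $W_n$ as the complement, inside the total space of the bundle of forms vanishing at $p_1,\dots,p_n$ over the off-diagonal locus of $(\pp\V)^n_{/\BGL_2}$, of the double-root subbundles and the triple-root locus, and derives the same relations $z_i+z_j+c_1=0$ and $[D_i]=0$ to kill everything in positive degree. The only (immaterial) discrepancy is the $c_1$-term in $[D_i]$ — the cokernel $\eta_i^*(\Omega_{\pp\V/\BGL_2}\otimes\O_{\pp\V}(2g+2))$ has first Chern class $2gz_i-c_1$, whereas you write $2gz_i+c_1$ and the paper writes $2gz_i$ — but all three versions yield $c_1=z_i=0$ and hence the same conclusion.
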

\begin{proof}
As before, let $\V$ be the tautological rank $2$ bundle on $\BGL_2$. Let $\eta_i: (\pp \V)^n \to \pp \V$ be projection onto the $i^{\mathrm{th}}$ factor, and write $z_i := \eta_i^* \O_{\pp \V}(1)$. Let $U \subset (\pp \V)^n$ be the complement of all the big diagonals. The locus where $p_i = p_j$ is defined by the vanishing of the composition $\eta_i^* \O_{\pp \V}(-1) \to \V \to  \V/\eta_j^*\O_{\pp \V}(-1)$.
Thus, we obtain relations $z_i + z_j + c_1 = 0 \in A^*(U)$ for each $i \neq j$.

Now consider
\[X = \{(p_1, \ldots, p_n, F): p_i \in V(F)\} \subset U \times_{\BGL_2} \Sym^{2g+2} \V^\vee,\]
which is the total space of a vector bundle over $U$. In particular, $A^*(X) \cong A^*(U),$ which is generated by $c_1, c_2, z_1, \ldots, z_n$.
Since the marked points are not allowed to be singular, we wish to remove from $X$ the divisors $X_i$ where $V(F)$ has multiplicity $2$ or more at $p_i$. Each $X_i \subset X$ is a subbundle of $X$ whose cokernel is $\eta_i^*(\Omega_{\pp V/\BGL_2} \otimes \O_{\pp \V}(2g+2))$. Hence, the fundamental class of $X_i \subset X$ is $2g \cdot z_i$. It follows that $z_i = 0 \in A^*(X \smallsetminus (X_1 \cup \cdots \cup X_n))$. Since $0 = z_1 + z_2 + c_1$, we also see that $c_1 = 0 \in A^*(X \smallsetminus (X_1 \cup \cdots \cup X_n))$.
Finally, the form $F$ should not have any triple roots and must have at least one simple root, so we also wish to remove $U \times_{\BGL_2} (\Delta_3 \cup R) \subset U\times_{\BGL_2} \Sym^{2g+2} \V^\vee = X$. This introduces the relations from Lemma \ref{0pt}, which showed $c_2$ is a multiple of $c_1^2$.
In particular,
\[A^*(X \smallsetminus (X_1 \cup \cdots \cup X_n \cup U \times_{\BGL_2} (\Delta_3 \cup R))) = \qq.\] 
The above open subset of $X$ has the same coarse space as $W_n$, so $A^*(W_n) = \qq$ as well.
\end{proof}

We now describe many relations in $R^*(\I_{g,n})$, which almost determine this ring.
\begin{lem} \label{r2}
All classes in $R^2(\I_{g,n})$ are proportional to $\delta^2$. In other words,
$\dim R^2(\I_{g,n}) \leq 1$.
Moreover, $R^i(\I_{g,n}) = 0$ for all $i \geq 3$. 
\end{lem}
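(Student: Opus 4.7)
By Corollary \ref{tcor}, $R^*(\I_{g,n})$ is generated as a $\qq$-algebra by $\delta$ and $\psi_1, \ldots, \psi_n$, so it suffices to establish relations among monomials in these classes. The first step is to pull back the presentation of $A^*(\I_{g,1})$ given by Lemma \ref{1pt}: for each $i$, the forgetful map $\I_{g,n} \to \I_{g,1}$ that remembers only the $i$-th marked point sends $\psi_1$ to $\psi_i$ and $\delta$ to $\delta$ (since our curves are irreducible and the markings avoid the nodes), so the pullback yields
\[\delta^3 = 0, \qquad \psi_i\delta = -(2g-1)\delta^2, \qquad \psi_i^2 = -a_g\delta^2\]
for every $i$.

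The remaining degree-two monomial to handle is $\psi_i\psi_j$ with $i \neq j$, and the plan is to extract it from the self-intersection of $D_{ij}$. By Proposition \ref{induction}, $D_{ij} \cong \I_{g,n-1}\smallsetminus D_{ii}$, and under this identification the normal bundle $N_{D_{ij}/\I_{g,n}}$ is the tangent line to the universal curve along the section $p_j = \bar{p}_i$, i.e., $\mathbb{L}_j^\vee|_{D_{ij}}$. The hyperelliptic involution is an automorphism of the fiber that pulls $\omega_C$ back to itself and exchanges $p_i$ and $p_j$ on $D_{ij}$, so $\mathbb{L}_j|_{D_{ij}} \cong \mathbb{L}_i|_{D_{ij}}$, and the projection formula gives
\[[D_{ij}]^2 = -\psi_i \cdot [D_{ij}].\]
Substituting \eqref{dij} on both sides and using the three relations above to replace $\psi_i^2$, $\psi_j^2$, $\psi_i\delta$, and $\psi_j\delta$ by scalar multiples of $\delta^2$, the resulting identity expresses a nonzero rational multiple of $\psi_i\psi_j$ (the coefficient works out to $(g+1)/(g-1)^2$, which is nonzero for $g\geq 2$) as a multiple of $\delta^2$. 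Combined with the previous relations, this gives $\dim R^2(\I_{g,n}) \leq 1$.

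For the vanishing of $R^{\geq 3}(\I_{g,n})$, any monomial of degree at least $3$ in the generators contains a pair of factors matching one of the quadratic relations $\psi_i\psi_j,\psi_i^2,\psi_i\delta \in \qq\cdot\delta^2$. Applying this substitution reduces the monomial to a scalar multiple of $\delta^k$ with $k \geq 3$, which vanishes since $\delta^3 = 0$. The main obstacle is the normal-bundle identification that produces $[D_{ij}]^2 = -\psi_i[D_{ij}]$; once this geometric ingredient is in hand (in particular, the comparison $\mathbb{L}_j|_{D_{ij}} \cong \mathbb{L}_i|_{D_{ij}}$ via the hyperelliptic involution), the rest of the argument is a mechanical bookkeeping of the relations pulled back from $\I_{g,1}$.
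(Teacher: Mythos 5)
Your proof is correct, and the first part (pulling back the relations $\delta^3=0$, $\psi_i\delta\propto\delta^2$, $\psi_i^2\propto\delta^2$ from $\I_{g,1}$) is exactly the paper's argument. Where you diverge is the key step producing the relation on $\psi_i\psi_j$. The paper observes that $D_{ii}\cap D_{ij}=\varnothing$ (a point of the intersection would force $p_i=\overline{p}_i=p_j$), so $[D_{ii}][D_{ij}]=0$; expanding via \eqref{dii} and \eqref{dij} gives $\frac{g+1}{(g-1)^2}\psi_i\psi_j$ plus terms proportional to $\delta^2$, with no normal-bundle input needed. You instead compute the self-intersection $[D_{ij}]^2$ via the identification $N_{D_{ij}/\I_{g,n}}\cong\mathbb{L}_j^\vee|_{D_{ij}}$ (since $D_{ij}$ is the image of the section $\overline{p}_i$ of the universal curve over an open substack of $\I_{g,n-1}$), which is a correct but heavier geometric input; amusingly, both routes produce the same leading coefficient $\frac{g+1}{(g-1)^2}$ on $\psi_i\psi_j$, since each is really extracting the cross term of a product of two divisors from \eqref{dij}--\eqref{dii}. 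Two small remarks on your version: the hyperelliptic-involution comparison $\mathbb{L}_i|_{D_{ij}}\cong\mathbb{L}_j|_{D_{ij}}$ is valid (the involution exists in families), but it is not actually needed --- writing $[D_{ij}]^2=-\psi_j[D_{ij}]$ directly gives the same cross term $-\frac{1}{g-1}\psi_i\psi_j$; and in the final reduction for $R^{\geq 3}$, the substitution must be iterated (a degree-$3$ monomial first becomes $\delta^2$ times a generator, and a second substitution lands it in $\qq\cdot\delta^3=0$), which your phrasing slightly glosses over but which causes no difficulty. The paper's emptiness argument is the more economical of the two; yours generalizes more readily to situations where one has a good handle on normal bundles but no convenient disjointness.
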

\begin{proof}
By Lemma \ref{tcor}, we know $R^*(\I_{g,n})$ is generated by the $\psi$ classes and $\delta$. Moreover, by Lemma \ref{0pt}, we know $\delta^3 = 0$. Hence, it suffices to show $\psi_i^2$ and $\psi_i\psi_j$ are proportional to $\delta^2$.

The class $\psi_i$ is pulled back from the map $\I_{g,n} \to \I_{g,1}$ that forgets all but the $i^{\text{th}}$ marked point, so Lemma \ref{1pt} shows that $\psi_i^2$ and $\psi_i\delta$ are proportional to $\delta^2$.

For $i \neq j$, the intersection of $D_{ii}$ and $D_{ij}$ in $\I_{g,n}$ is empty because at any point of their intersection we would have $p_i = \overline{p}_i =p_j$, which is impossible. 
Using \eqref{dii} and \eqref{dij}, we see
\begin{align*}
0 &= [D_{ii}][D_{ij}]
= \frac{g+1}{(g-1)^2} \psi_i \psi_j + \text{terms proportional to $\delta^2$}.
\end{align*}
so $\psi_i\psi_j$ is also proportional to $\delta^2$.
\end{proof}

\begin{rem}
Lemma \ref{r2} nearly determines $R^*(\I_{g,n})$. All that remains unknown is if $\delta^2$ is non-zero. (We know $\delta^2$ is non-zero on $\I_{g,0}$ and $\I_{g,1}$, but it is possible that $\delta^2$ could lie in the kernel of the pullback to $\I_{g,n}$ for some $n > 1$.)
\end{rem}


Since $\delta$ lies in the kernel of the restriction map $R^*(\I_{g,n}) \to R^*(\H_{g,n})$ and $\psi_1, \ldots, \psi_n$ are independent in $A^1(\H_{g,n})$, Lemma \ref{r2} implies
 Proposition \ref{rprop} from the introduction:
\begin{cor}
We have $R^*(\H_{g,n}) = \qq[\psi_1, \ldots, \psi_n]/(\psi_1, \ldots, \psi_n)^2$.
\end{cor}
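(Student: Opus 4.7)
The plan is to deduce the statement directly from Lemma \ref{r2} by restricting along the open inclusion $\H_{g,n} \hookrightarrow \I_{g,n}$. First, I would observe that restriction gives a surjection $R^*(\I_{g,n}) \twoheadrightarrow R^*(\H_{g,n})$, since tautological classes on $\H_{g,n}$ are by definition restrictions from $\Mb_{g,n}$ and factor through $\I_{g,n}$. Because $\H_{g,n} = \I_{g,n} \smallsetminus \Delta$, the boundary class $\delta$ maps to zero under this restriction, and hence so does $\delta^2$.

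Next, I would invoke Lemma \ref{r2}: in $R^*(\I_{g,n})$, every class in degree $\geq 2$ is a scalar multiple of $\delta^2$, and all higher degrees vanish. Restricting, this forces $R^i(\H_{g,n}) = 0$ for every $i \geq 2$. In degree $1$, the ring is spanned by $\psi_1, \ldots, \psi_n$ (the image of $\delta$ being zero), so we obtain a surjection
\[
\qq[\psi_1,\ldots,\psi_n]/(\psi_1,\ldots,\psi_n)^2 \twoheadrightarrow R^*(\H_{g,n}).
\]

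To finish, I need the $\psi_i$ to be linearly independent in $A^1(\H_{g,n})$. This is the one input I am assuming from outside the immediate context: it follows from Scavia's theorem (Theorem \ref{Scavia}), which will be extended to arbitrary algebraically closed ground fields of characteristic not $2$ in Section \ref{pp}. Concretely, since $\psi_1, \ldots, \psi_n, \delta$ form a basis of $A^1(\I_{g,n})$, their restrictions $\psi_1, \ldots, \psi_n$ remain linearly independent on the complement of the irreducible divisor $\Delta$. Given this, the displayed surjection is an isomorphism, yielding the claimed description of $R^*(\H_{g,n})$.

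There is essentially no obstacle internal to this corollary; the only non-trivial ingredient is the independence of the $\psi_i$ on $\H_{g,n}$, which is exactly the content imported from Scavia's result (and its characteristic-free analogue established later in the paper).
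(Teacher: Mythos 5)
Your proposal is correct and follows essentially the same route as the paper: restrict Lemma \ref{r2} along $\H_{g,n} = \I_{g,n}\smallsetminus\Delta$ to kill $\delta$ (hence all of $R^{\geq 2}$), and then use the independence of $\psi_1,\ldots,\psi_n$ in $A^1(\H_{g,n})$ to see there are no further relations in degree $1$. The only cosmetic difference is that you deduce that independence from Theorem \ref{Scavia} via the basis of $A^1(\I_{g,n})$, whereas the paper ultimately sources it (in arbitrary characteristic $\neq 2$) from the test-curve argument of Lemma \ref{psi-indep}; both are valid.
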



\section{Quotient stack presentations}\label{quotstack}
The purpose of this section is to give quotient stack presentations for the stacks $\I_{g,n}^{\circ, i}$ and $W_i$ (defined in \eqref{Hl}--\eqref{si}). Using these presentations, we will show that $A^*(\I_{g,n}^{\circ, i})$ is generated by divisors and $A^*(W_{i})$ is generated by restrictions of divisors from $\I_{g,n}$. 

We begin by constructing models for our hyperelliptic curve lying on $\pp^1 \times \pp^1$, in which we will be able to keep track of our marked points. Our method is inspired by Casnati's work \cite{Casnati}, which used plane models of hyperelliptic curves. One of the maps to $\pp^1$ is given by the hyperelliptic map $\alpha:C \to \pp^1$. We denote the corresponding degree two line bundle by $L := \alpha^*\O_{\pp^1}(1)$. The other map to $\pp^1$ has degree $g+1$ and comes from the following lemma.

Recall our convention \eqref{conv} that $\I_{g,n}^{\circ, i} = \I_{g,n}^\circ$ if $n \geq g+1$.

\begin{lem} \label{themap}
Given $(C, p_1, \ldots, p_n) \in \I_{g,n}^{i, \circ}$, define
\[M = \begin{cases}  \O(p_1 + \ldots + p_{i-1} + (g-n+2)p_{i} + p_{i+1} + \ldots + p_n) & \text{if $n \leq g$} \\
\O(p_1 + \ldots + p_{g+1}) & \text{if $n \geq g+1$} \end{cases} .\]
Then $M$ is base point free with $h^0(C, M) = 2$.
\end{lem}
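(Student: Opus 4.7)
The plan is to apply Riemann--Roch on the Gorenstein curve $C$ together with the identification $\omega_C \cong L^{g-1}$ established in Section \ref{hi}. In both cases, summing the coefficients gives $\deg M = g + 1$: $(n - 1) + (g - n + 2) = g + 1$ when $n \leq g$, and $g + 1$ immediately when $n \geq g+1$. Hence $\chi(M) = 2$, and the equality $h^0(M) = 2$ reduces to showing $h^1(M) = h^0(\omega_C \otimes M^\vee) = 0$.

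The heart of the argument uses that $\omega_C \cong L^{g-1} = \alpha^*\O_{\pp^1}(g-1)$, combined with $\alpha_*\O_C = \O_{\pp^1} \oplus \O_{\pp^1}(-g-1)$: every section of $\omega_C$ is $\alpha^*f$ for some polynomial $f$ of degree $\leq g-1$ on $\pp^1$. A nonzero section of $\omega_C \otimes M^\vee$ would correspond to such an $f$ whose pullback vanishes along the effective divisor $D$ defining $M$. The hypothesis that no pair of marked points is conjugate forces the images $\alpha(p_1), \dots, \alpha(p_{\min(n,\, g+1)})$ to be pairwise distinct, since otherwise $p_k \in \{p_j, \bar p_j\}$ for some $j\neq k$. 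When $n \leq g$, the assumption $(C, p_\bullet) \in \I_{g,n}^{\circ,i}$ additionally guarantees that $p_i$ is not Weierstrass, so $\alpha$ is unramified at $p_i$; consequently $\alpha^*f$ vanishes at $p_i$ to order $\geq g - n + 2$ if and only if $f$ vanishes at $\alpha(p_i)$ to that order. Summing: in the case $n \leq g$, the total required vanishing of $f$ is at least $(g - n + 2) + (n - 1) = g + 1$; in the case $n \geq g+1$, one needs a simple zero at each of $g+1$ distinct points. Either way this exceeds $\deg f \leq g - 1$, so $f \equiv 0$, and therefore $h^1(M) = 0$ and $h^0(M) = 2$.

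For base point freeness, the key observation is that the canonical section $s_D \in H^0(C, M)$ vanishes exactly along $\mathrm{Supp}(D) \subseteq \{p_1, \dots, p_n\}$, and marked sections are disjoint from the nodes. Thus $s_D$ is nonvanishing at every node of $C$ and at every smooth point outside $\mathrm{Supp}(D)$, so the only candidates for base points are the marked points in $\mathrm{Supp}(D)$. For such a smooth point $p_j$, base point freeness at $p_j$ is equivalent to $h^0(M(-p_j)) = 1$; applying Riemann--Roch to the degree-$g$ line bundle $M(-p_j)$, this reduces to showing $h^0(\omega_C \otimes M^\vee(p_j)) = 0$. The identical polynomial-counting argument with $D - p_j$ in place of $D$ yields required vanishing at least $g$ (the tightest subcase is $n \leq g$ with $j = i$, giving $(g - n + 1) + (n - 1) = g$), which still exceeds $\deg f \leq g - 1$.

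The main obstacle is essentially bookkeeping: one must keep track of whether the distinguished point $p_i$ is among those we decrement in the base-point-freeness step, and must check that the bound $g > g - 1$ survives. No case is difficult because whether $p_j$ (for $j \neq i$) is Weierstrass is irrelevant to the order-$1$ condition at $\alpha(p_j)$, so the only place ramification matters is at $p_i$, which the hypothesis $\I_{g,n}^{\circ,i}$ already excludes.
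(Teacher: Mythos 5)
Your proof is correct, but it is organized differently from the paper's. The paper makes a single contradiction hypothesis --- that $h^0(C, M(-p)) \geq 2$ for some point $p$ --- which simultaneously rules out $h^0(C,M) \geq 3$ and the existence of a base point, and derives the contradiction by applying Lemma \ref{georr} twice (once to write $M \cong L(x_1 + \cdots + x_{g-2} + p)$, once to produce a forbidden conjugate pair among the marked points). You instead split the statement into two Serre-duality computations, $h^1(C,M) = 0$ and $h^1(C, M(-p_j)) = 0$ for each $p_j$ in the support, and prove each by unwinding the underlying geometric fact --- that $H^0(C,\omega_C)$ is the pullback of $H^0(\pp^1, \O_{\pp^1}(g-1))$ along the hyperelliptic map --- into an explicit root count for a degree-$(g-1)$ polynomial on $\pp^1$. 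The two arguments rest on the same geometric input (your polynomial-counting step is essentially the proof of Lemma \ref{georr} specialized to the divisors at hand), but yours is self-contained and makes the numerology ($g+1 > g-1$ for the section count, $g > g-1$ for base point freeness) and the precise role of each hypothesis (distinct $\alpha$-images from the no-conjugate-pairs condition; unramifiedness at $p_i$ from $p_i \notin D_{ii}$) completely transparent, at the cost of redoing work the paper has already packaged into a lemma. Your explicit treatment of why nodes cannot be base points is also slightly more careful than the paper's.
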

\begin{proof}
To start, observe $h^0(C, M) \geq \chi(M) = 2$ by Riemann--Roch.
Suppose for the sake of contradiction that there exists a point $p \in C$ so that $h^0(C, M(-p)) \geq 2$. Once we derive a contradiction, this will show $M$ is base point free and has exactly $2$ sections.

Let $M' = M(-p)$. Then $M'$ has degree $g$ and $h^0(C, M') \geq 2$. By Lemma \ref{georr}, we see $M' \cong L(x_1 + \ldots + x_{g-2})$, so $M \cong L(x_1 + \ldots + x_{g-2} + p)$.
It follows that 
\[\O(p_2 + \ldots + p_{i-1} + (g-n+2)p_i + p_{i+1} + \ldots + p_n) \cong  M(-p_1) \cong \O(\overline{p}_1 + x_1 + \ldots + x_{g-2} + p).\]
If $i \neq 1$, the assumption $(C, p_1, \ldots, p_n) \in \I_{g,n}^{\circ, i}$ means $\overline{p}_1$ is not among the points 
\[p_2 + \ldots + p_{i-1} + (g-n+2)p_i + p_{i+1} + \ldots + p_n.\] 
Thus, $h^0(C, M(-p_1)) \geq 2$. By Lemma \ref{georr}, we would have $p_j = p_k$ for some $j \neq k$, but this contradicts the fact that $(C, p_1, \ldots, p_n) \in \I_{g,n}^{\circ, i}$. 
If $i = 1$, then $\overline{p}_1 \neq p_1$ so $\overline{p}_1$ is still not among the points representing $M(-p_1) = (g-n+1)p_1 + p_2 + \ldots + p_n$. Thus, as before, $h^0(C, M(-p_1)) \geq 2$, so Lemma \ref{georr} gives a contradiction with the fact that no pair of points in this divisor is conjugate.
\end{proof}

Let $\beta: C \to \pp^1$ be the degree $g+1$ map associated to the line bundle $M$ introduced in Lemma \ref{themap}, so we have maps
\begin{equation} \label{ab}
\begin{tikzcd}
C \arrow{r}{\beta} \arrow{d}[swap]{\alpha} & \pp^1 = \pp H^0(C, M)^\vee \\
{\color{white} \pp H^0(C, L)^\vee = } \pp^1 = \pp H^0(C, L)^\vee 
\end{tikzcd}
\end{equation}
 The product $\alpha \times \beta: C \to \pp^1 \times \pp^1 $ sends $C$ to a curve of bidegree $(g+1, 2)$ on $\pp^1 \times \pp^1$.
By the degree-genus formula on $\pp^1 \times \pp^1$, the image has genus $g$, so the map must be an embedding. Let $\O(g+1,2) = \O_{\pp^1}(g+1) \boxtimes \O_{\pp^1}(2)$
The equation of $C \subset \pp^1 \times \pp^1$ has the form
\begin{align} \label{PQR}
    P(x_0, x_1)y_0^2 + Q(x_0, x_1)y_0y_1 + R(x_0, x_1)y_1^2 &\in H^0(\pp^1 \times \pp^1, \O(g+1,2)) \\
    &= H^0(\p^1,\O_{\pp^1}(g+1))\otimes H^0(\p^1,\O_{\pp^1}(2)) \notag
\end{align}
where $P, Q, R$ are homogeneous of degree $g+1$.
We call this vector space of equations 
$E:=H^0(\pp^1 \times \pp^1, \O(g+1,2))$.

We will need to know when certain configurations of points on $\pp^1 \times \pp^1$ impose independent conditions on forms in $E$.
\begin{lem}\label{indconditions}
Let $C\subset \p^1\times\p^1$ be an irreducible bidegree $(g+1,2)$ curve. Let $\Gamma$ be a degree at most $2g+5$ divisor contained in the smooth locus of $C$. Then the evaluation map
\begin{equation} \label{evgamma}
E=H^0(\p^1\times \p^1,\O(g+1,2))\rightarrow H^0(\Gamma,\O(g+1,2)|_{\Gamma})
\end{equation}
is surjective.
\end{lem}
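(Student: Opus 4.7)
The plan is to turn surjectivity of \eqref{evgamma} into a cohomology vanishing statement, then use the fact that $C$ is a divisor of class $\O(g+1,2)$ to cut down to a line-bundle degree check on $C$.

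Taking the long exact sequence of
\[
0 \to \I_{\Gamma}(g+1,2) \to \O(g+1,2) \to \O_\Gamma(g+1,2) \to 0,
\]
surjectivity of \eqref{evgamma} is equivalent to $H^1(\pp^1\times\pp^1, \I_{\Gamma}(g+1,2)) = 0$, where $\I_\Gamma$ is the ideal sheaf of $\Gamma$ in $\pp^1 \times \pp^1$. Using the filtration $\Gamma \subset C \subset \pp^1\times\pp^1$, I would then consider
\[
0 \to \I_{C/\pp^1\times\pp^1}(g+1,2) \to \I_{\Gamma}(g+1,2) \to \I_{\Gamma/C}\otimes \O_C(g+1,2) \to 0.
\]
Since $C$ is cut out by a section of $\O(g+1,2)$, the first term is simply $\O_{\pp^1\times\pp^1}$, whose higher cohomology vanishes, and the problem reduces to showing $H^1(C, \I_{\Gamma/C}\otimes \O_C(g+1,2)) = 0$.

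Because $\Gamma$ lies in the smooth locus of $C$, the sheaf $N := \I_{\Gamma/C}\otimes \O_C(g+1,2)$ is a line bundle on $C$, and its degree is $4g+4 - \deg\Gamma$, using that $\O(a,b)|_C$ has degree $2a + (g+1)b$. Since $C$ is a Cartier divisor on the smooth surface $\pp^1\times\pp^1$, it is Gorenstein and adjunction gives $\omega_C = \O(g-1,0)|_C$, of degree $2g-2$. The hypothesis $\deg \Gamma \leq 2g+5$ therefore yields $\deg N \geq 2g-1 > 2g-2 = \deg \omega_C$, and Serre duality on the integral curve $C$ gives $H^1(C,N) = H^0(C, \omega_C\otimes N^\vee)^\vee = 0$, since a line bundle of negative degree on an integral curve has no global sections. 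The only delicate point in the argument is that the bound $\deg\Gamma \leq 2g+5$ is precisely what is needed to push $\deg N$ strictly above $2g-2$; at $\deg\Gamma = 2g+6$ this Serre duality step would fail, which is what dictates the numerical threshold appearing in the lemma.
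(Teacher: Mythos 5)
Your proof is correct and is essentially the paper's argument in ideal-sheaf packaging: both reduce the statement to the vanishing $H^1\bigl(C, \O(g+1,2)|_C(-\Gamma)\bigr) = 0$, obtained from $\deg\Gamma \leq 2g+5$ together with $\deg \O(g+1,2)|_C = 4g+4$ and $\deg\omega_C = 2g-2$ via Serre duality. The only cosmetic difference is that the paper factors the evaluation map through $H^0(C,\O(g+1,2)|_C)$ and handles the first factor by a dimension count, whereas you absorb that step into the observation $\I_{C}(g+1,2)\cong\O_{\pp^1\times\pp^1}$.
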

\begin{proof}
The evaluation map \eqref{evgamma}
factors as
\begin{equation} \label{factorit}E = H^0(\pp^1 \times \pp^1, \O(g+1,2)) \to H^0(C, \O(g+1,2)|_C) \to H^0(\Gamma, \O(g+1,2)|_{\Gamma}).
\end{equation}
Write $N:=\O(g+1,2)|_C = L^{\otimes g+1} \otimes M^{\otimes 2}$, which is a line bundle of degree $4g+4$. By Riemann--Roch, we have $h^0(C, N) = \chi(C,N) = 3g+5$. Meanwhile,
\[h^0(\pp^1 \times \pp^1, \O(g+1,2)) = (g+2)(3) = 3g+6.\]
There is a unique polynomial of bidegree $(g+1,2)$ vanishing on $C$, so the first map in 
\eqref{factorit} is surjective by dimension counting.

Next, consider the exact sequence of sheaves on $C$:
\[ 0 \rightarrow N(-\Gamma) \to N \to N|_{\Gamma} \rightarrow 0\]
Taking global sections, we see that the second map in \eqref{factorit} is surjective if \[
0 = H^1(C, N(-\Gamma)) = H^0(\omega_C\otimes N(-\Gamma)^\vee).\] 
Because $n \leq 2g+5$, we see that 
\[\deg(\omega_C \otimes N(-\Gamma)^\vee) = 2g - 2 - (4g+4 - n) < 0,\]
so indeed it has no global sections.
\end{proof}


\subsection{The stack of appropriately marked $(g+1, 2)$ curves} \label{Xsec}
The construction modeling $C$ on $\pp^1 \times \pp^1$ 
works in families. Given $f: C \to S$ a family of irreducible nodal hyperelliptic curves with sections $p_1, \ldots, p_n: S \to C$, we
obtain a relative degree $g+1$ line bundle $M$ as in Lemma \ref{themap} (where, in the definition of $M$, we interpret $p_i$ as the image of the $i^{\mathrm{th}}$ section).
Since we have a marked point, there is a relative degree $2$ line bundle inducing the hyperelliptic map defined on the entire family by $L = \O(p_1 + \overline{p}_1)$.
Let us write $V = (f_*L)^\vee$ and $W = (f_*M)^\vee$, which are both rank $2$ vector bundles on $S$.
The relative version of \eqref{ab} becomes
\begin{equation} \label{abrel}
\begin{tikzcd}
C \arrow{r}{\beta} \arrow{d}[swap]{\alpha} &  \pp W = \pp (f_*M)^\vee\\
{\color{white} \pp(f_* L)^\vee = } \pp V = \pp (f_*L)^\vee
\end{tikzcd}
\end{equation}
We saw earlier that this map is an embedding on each fiber over $S$, so we obtain an embedding of our family $\iota: C \to \pp V \times_S \pp W$.
The sections $p_1,\ldots, p_n:S \to C$ give rise to sections $\sigma_1, \ldots, \sigma_n: S \to \pp V \times_S \pp W$ defined via $\sigma_{i} := \iota \circ p_i$.

In this way, we see $\I_{g,n}^{\circ, i}$ is equivalent to a stack $\X_{g,n, i}$ of appropriately marked $(g+1, 2)$ curves. More precisely, the objects of $\X_{g,n,i}$ over a scheme $S$ are tuples
\[(V, W, C, \sigma_1, \ldots, \sigma_n)\]
where $V, W$ are rank $2$ vector bundles on $S$; $C \subset \pp V \times_S \pp W$ is the zero locus of a  section of $\O_{\pp V}(g+1) \boxtimes \O_{\pp W}(2)$
such that $C \to S$ is a family of irreducible, at worst nodal curves; and $\sigma_1, \ldots, \sigma_n: S \to \pp V \times_S \pp W$ are sections that satisfy the following conditions:
\begin{enumerate}
    \item[(X.1)] If $\pi_1: \pp V \times_S \pp W \to \pp V$ is the first projection, then $\pi_1(\sigma_j) \cap \pi_1(\sigma_k) = \varnothing$ for $j \neq k$ (this ensures the sections do not give rise to conjugate points on $C$)
    \item[(X.2)] If $\pi_2: \pp V \times_S \pp W \to \pp W$ is the second projection, then {\small
    \begin{equation} \label{tcon} \pi_2^{-1}(\pi_2(\sigma_1)) \cap C = \begin{cases} \sigma_1 + \ldots + \sigma_{i-1} + (g-n+2) \sigma_i + \sigma_{i+1} + \ldots + \sigma_n & \text{if $n \leq g$} \\ 
\sigma_1 + \ldots + \sigma_{g+1} & \text{if $n \geq g+1$.}
\end{cases} \end{equation}}
 \item[(X.3)] $\sigma_1, \ldots, \sigma_n$ factor through the smooth locus of $C \subset \pp V\times_S \pp W \to S$
\end{enumerate}

Note that if $n \leq g$, then the fact that $\sigma_i$ is in the smooth locus of $C \to S$ and is tangent to the horizontal fiber $\pi_2^{-1}(\pi_2(\sigma_1))$ means $\sigma_i$ is not tangent to the vertical fiber $\pi_1^{-1}(\pi_2(\sigma_i))$, so it does not define a Weierstrass point on $C$.
See Figure \ref{pts-fig} below for an illustration of conditions (X.1) and (X.2).

\begin{figure}[h!]
    \centering
     \includegraphics[width=3.1in]{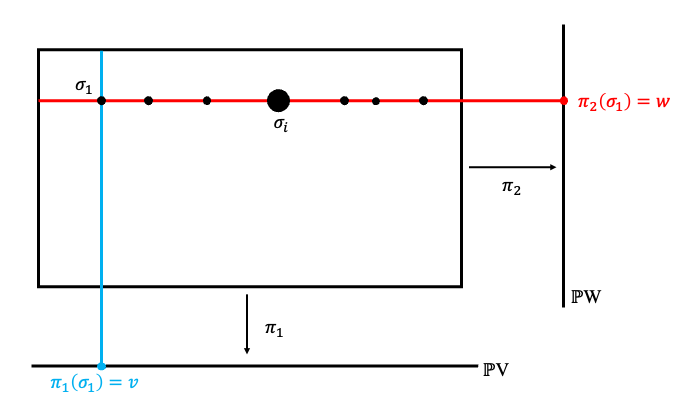}
    \hspace{.2in}
    \includegraphics[width=3.1in]{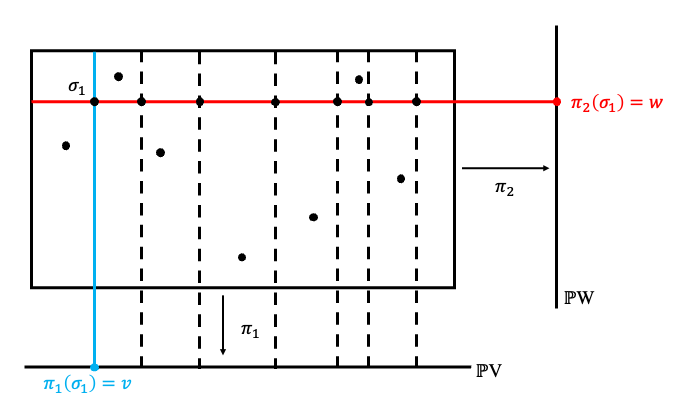}
   
    \caption{On the left is the configuration of points for $n \leq g$. In condition (X.2) We are going to ask that $C$ meet the red line with multiplicity $g-n+2$ at $\sigma_i$. On the right is the configuration of points for $n \geq g+1$. In both cases, the points have distinct projections onto the horizontal line (condition (X.1)). 
    }
    \label{pts-fig}
\end{figure}

\subsubsection{A savings of one point} \label{savings}
If $n \geq g+1$, then it is actually not necessary to keep track of $\sigma_{g+1}$. 
We can replace the condition \eqref{tcon} with the condition that $\pi_2^{-1}(\pi_2(\sigma_1))$ is reduced and contains $\sigma_1 + \ldots + \sigma_{g}$.
Indeed, from $C$ and $\sigma_1, \ldots, \sigma_g$, we can then recover $\sigma_{g+1}$ as $\pi_2^{-1}(\pi_2(\sigma_1)) \smallsetminus (\sigma_1 + \ldots + \sigma_g)$.

\subsection{Stacks of markings on $\pp^1 \times \pp^1$}
There is of course a natural map from $\X_{g,n,i}$ to the stack $\B_{g,n}$ whose objects over $S$ are tuples
\[(V, W, \sigma_1, \ldots, \sigma_g, \sigma_{g+2}, \ldots, \sigma_n)\]
where $V, W$ are rank $2$ vector bundles on $S$ and
$\sigma_1, \ldots, \sigma_g, \sigma_{g+2}, \ldots, \sigma_n: S \to \pp V\times_S \pp W$ are sections satisfying
\begin{enumerate}
    \item $\pi_1(\sigma_j) \cap \pi_1(\sigma_k) = \varnothing$ for $j \neq k$
    \item $\pi_2(\sigma_j) = \pi_2(\sigma_1)$ for $j \leq g$ and $\pi_2(\sigma_j) \cap \pi_2(\sigma_1) = \varnothing$ for $j \geq g+2$.
\end{enumerate}
(See Section \ref{savings} for why we are leaving out $\sigma_{g+1}$ when $n \geq g+1$.)

Finally, we consider the stack whose objects are tuples $(S, V, W, \sigma_1)$ with $V, W$ rank $2$ vector bundles on $S$ and $\sigma_1: S \to \pp V \times_S \pp W$ a section. The section $\sigma_1$ is equivalent to the data of sections $v: S \to \pp V$  and $w: S \to \pp W$. Let $\PU \subset \PGL_2$ be the stabilizer of a point $[1:0] \in \pp^1$. In other words, $\PU$ is the projectivization of the group $\mathrm{U} \subset \GL_2$ of upper triangular matrices.
The stack parametrizing the data of a rank $2$ bundle $V$ together with a section of $\pp V$ is the classifying stack $\BPU$. Thus, the stack of tuples $(S, V, W, \sigma_1)$ is simply $\BPU \times \BPU$.
The following diagram summarizes the maps we have considered so far
\begin{center}
\begin{tikzcd}
\I_{g,n}^{\circ, i} \cong \X_{g,n,i} {\color{white} \I_{g,n}^{\circ, i}} \arrow{d} & (C, p_1, \ldots, p_n) \arrow{r} &  \arrow{l} (V, W, C, \sigma_1, \ldots, \sigma_n) \arrow{d}  \\
\B_{g,n} \arrow{d}  & & (V, W, \sigma_1, \ldots, \sigma_g, \sigma_{g+2}, \ldots, \sigma_n) \arrow{d}   \\
\BPU \times \BPU & & (V, W, \sigma_1).
\end{tikzcd}
\end{center}
Our goal is now to factor each vertical map above into
open inclusions and affine/projective bundle morphisms.

First, we describe the map $\B_{g,n} \to \BPU \times \BPU$.
Let us write $\V$ and $\W$ for the universal rank $2$ bundles over $\BPU \times \BPU$, one pulled back from each factor. Let $v \times w: \BPU \times \BPU \to \pp \V \times \pp \W$ be the universal section and label the projection maps as follows
\begin{center}
\begin{tikzcd}
\pp \V \times \pp \W \arrow{r}{\pi_2} \arrow{d}[swap]{\pi_1} \arrow{dr}{\pi} & \pp \W \arrow{d} \\
\pp \V \arrow{r} & \BPU \times \BPU \ar[bend right = 30, u, swap, "w"]
\ar[bend left = 20, l, "v"]
\end{tikzcd}
\end{center}

In the case $n = 1$, we have $\B_{g,1} = \BPU \times \BPU$ and $\sigma_1 = v \times w$.
For $2 \leq n \leq g$, we have that $\B_{g,n}$ is an open substack of
\[\mathcal{A}_{g,n} := (\pp \V \smallsetminus v)^{n-1}. \]
This is saying that 
the sections $\sigma_2, \ldots, \sigma_n$ tell us $n-1$ points distinct from $\sigma_1$ on the same horizontal fiber as $\sigma_1$.
Meanwhile, if $n \geq g+1$, then $\B_{g,n}$ is an open substack of
\[\mathcal{A}_{g,n} := (\pp \V \smallsetminus v)^{g-1} \times ((\pp \V \smallsetminus v) \times 
(\pp \W \smallsetminus w))
^{n-g-1}.\]
This is because we have $g-1$ sections $\sigma_2, \ldots, \sigma_{g}$ on the same horizontal fiber as $\sigma_1$ and the remaining sections $\sigma_{g+2},\ldots, \sigma_n$ lie in the complement of the vertical and horizontal fibers through $\sigma_1$. The condition (X.1) that no two sections lie in the same vertical fiber is open.

\subsection{Relating $\X_{g,n,i}$ to a projective bundle over $\B_{g,n}$}
Next, let 
\[\N := \O_{\pp \V}(g+1) \boxtimes \O_{\pp \W}(2).\]
(This is the line bundle that defines curves in the class of interest on $\pp^1 \times \pp^1$.) Write $\pi: \pp \V \times \pp \W \to \BPU \times \BPU$ for the structure map. Define 
\[\E := \pi_* \N = \Sym^{g+1} \V^\vee \otimes \Sym^2 \W^\vee,\]
which is a vector bundle on $\BPU \times \BPU$.
Equivalently, $\E$ is the quotient $E/(\PU \times \PU)$ of the vector space $E$ we considered earlier in \eqref{PQR}.

There is a natural map from $\X_{g,n,i}$ to $\pp(\phi^*\E)$ over $\B_{g,n}$ that sends $(V, W, C, \sigma_1, \ldots, \sigma_n)$ to the equation defining $C \subset \pp \V \times \pp \W$. The image is contained in the locus of equations
with appropriate vanishing behavior at the sections $\sigma_j$.
Let $\phi: \B_{g,n} \to \BPU \times \BPU$  so we have
\begin{equation} \label{setup}
\begin{tikzcd}
& \pp \V \times \pp \W \arrow{d}{\pi} \\
 \arrow{ur}{\sigma_j} \B_{g,n} \arrow{r}[swap]{\phi} & \BPU \times \BPU.
\end{tikzcd}
\end{equation}
The equations in $\phi^* \E$ that vanish along $\sigma_j$ is just the kernel of the evaluation map pulled back from the $j^{\mathrm{th}}$ factor:
\[\phi^*\E = \eta_j^* \pi^* \pi_*\N \to \eta_j^* \N. \]

\subsection{The case $n \leq g$} \label{smn}
When $n \leq g$, we also want the vanishing of our equation to be tangent with order $g-n+2$ to the horizontal fiber through $\sigma_i$.
Such equations make up the kernel of the evaluation map in the principal parts bundle pulled back from the $i^{\mathrm{th}}$ factor:
\[\phi^*\E = \eta_i^* \pi^* \pi_*\N \to \eta_i^* P^{g-n+1}_{\pp \V \times \pp \W/\pp \W}(\N).\]
To get the correct vanishing behavior at all of $\sigma_1, \ldots, \sigma_n$, we are therefore interested in the kernel of the evaluation map
\begin{equation} \label{eeev}\mathrm{ev}: \phi^* \E \to \eta_i^*P^{g-n+1}_{\pp \V \times \pp \W/\pp \W}(\N) \oplus \bigoplus_{\substack{1 \leq j \leq n \\ j \neq i}} \eta_j^* \N.
\end{equation}
\begin{lem} \label{evs}
The map $\mathrm{ev}$ in \eqref{eeev} is surjective.
\end{lem}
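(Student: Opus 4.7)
The plan is to check surjectivity of $\mathrm{ev}$ fiberwise over $\mathcal{B}_{g,n}$ and reduce the claim to an elementary calculation on $\mathbb{P}^1$. Fix a geometric point of $\mathcal{B}_{g,n}$, giving rank two vector spaces $V,W$ and sections $\sigma_1,\dots,\sigma_n$ of $\mathbb{P}V\times\mathbb{P}W$. Because $n\leq g$, the defining conditions of $\mathcal{B}_{g,n}$ force $\pi_2(\sigma_j)=\pi_2(\sigma_1)$ for every $j\leq n$, so all $n$ sections lie on the single horizontal fiber $F:=\pi_2^{-1}(\pi_2(\sigma_1))\cong\mathbb{P}V\cong\mathbb{P}^1$, where they are pairwise distinct by the condition that $\pi_1(\sigma_j)\cap\pi_1(\sigma_k)=\varnothing$ for $j\ne k$.

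The first step is to identify the fiber of the target of $\mathrm{ev}$ with $H^0(\Gamma,\mathcal{N}|_\Gamma)$ for a suitable divisor $\Gamma$ on $F$. By definition of the relative principal parts bundle, $P^{g-n+1}_{\mathbb{P}V\times\mathbb{P}W/\mathbb{P}W}(\mathcal{N})|_{\sigma_i}$ is the restriction of $\mathcal{N}$ to the $(g-n+2)$-nd infinitesimal neighborhood of $\sigma_i$ inside $F$. Setting
\[
\Gamma \;:=\; (g-n+2)\,\sigma_i \;+\; \sum_{\substack{1\le j\le n\\ j\ne i}}\sigma_j,
\]
a divisor of degree $(g-n+2)+(n-1)=g+1$ on $F$, the target of $\mathrm{ev}$ at this point of $\mathcal{B}_{g,n}$ is canonically $H^0(\Gamma,\mathcal{N}|_\Gamma)$.

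The key step is to factor $\mathrm{ev}$ as
\[
E\;=\;H^0(\mathbb{P}V\times\mathbb{P}W,\mathcal{N})\;\xrightarrow{\;r\;}\;H^0(F,\mathcal{N}|_F)\;\xrightarrow{\;e\;}\;H^0(\Gamma,\mathcal{N}|_\Gamma),
\]
and show both $r$ and $e$ are surjective. For $r$: under the K\"unneth identifications $E=\Sym^{g+1}V^\vee\otimes\Sym^2 W^\vee$ and $H^0(F,\mathcal{N}|_F)\cong\Sym^{g+1}V^\vee\otimes\mathcal{O}_{\mathbb{P}W}(2)|_{\pi_2(\sigma_1)}$, the map $r$ is the tensor product of $\mathrm{id}_{\Sym^{g+1}V^\vee}$ with pointwise evaluation $\Sym^2W^\vee\to\mathcal{O}_{\mathbb{P}W}(2)|_{\pi_2(\sigma_1)}$, which surjects onto the one-dimensional fiber. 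For $e$: since $\mathcal{N}|_F\cong\mathcal{O}_{\mathbb{P}^1}(g+1)$, the short exact sequence
\[
0\;\to\;\mathcal{O}_{\mathbb{P}^1}(g+1)(-\Gamma)\;\to\;\mathcal{O}_{\mathbb{P}^1}(g+1)\;\to\;\mathcal{O}_\Gamma\;\to\;0
\]
gives surjectivity on global sections because $\mathcal{O}_{\mathbb{P}^1}(g+1)(-\Gamma)$ has degree zero on $\mathbb{P}^1$ and hence vanishing $H^1$.

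There is no substantial obstacle: once one observes that all of $\sigma_1,\dots,\sigma_n$ lie on a common ruling of $\mathbb{P}V\times\mathbb{P}W$, the claim is just Riemann-Roch on $\mathbb{P}^1$, and unlike Lemma~\ref{indconditions} the argument never uses the curve $C$. As a byproduct one reads off that $\ker(\mathrm{ev})$ is a vector bundle of rank $3g+6-(g+1)=2g+5$, a fact that will presumably feed into the projective bundle presentation of $\mathcal{X}_{g,n,i}$ in the subsequent argument.
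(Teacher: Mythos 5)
Your proof is correct and follows essentially the same route as the paper's: factor $\mathrm{ev}$ through restriction to the horizontal fiber $\pi_2^{-1}(\pi_2(\sigma_1))$ (which in coordinates is just extracting the degree-$(g+1)$ polynomial $P(x_0,x_1)$) and then evaluate at the degree-$(g+1)$ divisor $(g-n+2)\sigma_i+\sum_{j\neq i}\sigma_j$, with both steps surjective. The paper simply asserts surjectivity of the second step where you invoke the vanishing of $H^1(\mathbb{P}^1,\mathcal{O})$, but the argument is the same.
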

\begin{proof}
The equation \eqref{eeev} is the equivariant version of the map 
\[E = H^0(\pp^1 \times \pp^1, \O(g+1, 2)) \to H^0(\O_{\pp^1}(g+1)) \to \O_D\]
that restricts a bidegree $(g+1, 2)$ polynomial to the distinguished horizontal line of the ruling $\pp \V \times \pi_2(\sigma_1)$ and then restricts the resulting degree $g+1$ polynomial to the degree $g+1$ subscheme $D = \sigma_1 + \ldots + (g+n-2)\sigma_i + \ldots+\sigma_n \subset \pp \V \times \pi_2(\sigma_1)$.
 If we were to pick coordinates as in \eqref{PQR}, the first map would be taking the polynomial $P(x_0, x_1)$, and the second map would be evaluating $P(x_0, x_1)$ at a specified degree $g+1$ subscheme of $\pp^1$.
Both of these maps are surjective.
\end{proof}

Let $\F \subset \phi^*\E$ be the kernel of $\mathrm{ev}$, which is a vector bundle by Lemma \ref{evs}.

\begin{lem} \label{oplem}
There is an open inclusion $\I_{g,n}^{\circ}\cong\X_{g,n,i} \subset \pp \F$.
\end{lem}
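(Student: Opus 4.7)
The plan is to identify $\X_{g,n,i}$ with an open substack of $\pp \F$ cut out by the geometric conditions that the tautological bidegree $(g+1,2)$ form defines an irreducible, at worst nodal curve through which the marked sections pass smoothly. First, I would produce a map $\X_{g,n,i} \to \pp \F$ over $\B_{g,n}$ as follows: given an object $(V, W, C, \sigma_1, \ldots, \sigma_n)$ of $\X_{g,n,i}$, the curve $C \subset \pp V \times_S \pp W$ is cut out by a unique-up-to-scalar section of $\O_{\pp V}(g+1) \boxtimes \O_{\pp W}(2)$, giving a point of $\pp \phi^*\E$. Condition (X.2), together with the fact that $\sigma_j \in C$ for each $j \neq i$, forces this section into the kernel $\F$ of the evaluation map \eqref{eeev}. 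This construction is manifestly functorial in $S$.

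For the inverse, let $\U \subset \pp \F$ be the open substack defined by the conditions that (a) the universal zero locus $C$ is an irreducible family of at worst nodal curves over its base, and (b) the sections $\sigma_1, \ldots, \sigma_n$ factor through the smooth locus of $C$. Both (a) and (b) are open conditions by standard semicontinuity. Over a point of $\U$, conditions (X.1) and (X.3) are immediate---the former is inherited from the base $\B_{g,n}$, the latter is exactly (b). The main step is to verify (X.2). Since $C$ is irreducible of bidegree $(g+1,2)$, it cannot contain the horizontal fiber $\pi_2^{-1}(\pi_2(\sigma_1))$ as a component, so the restriction of the defining form to this fiber is a nonzero section of $\O_{\pp^1}(g+1)$, cutting out a degree-$g+1$ divisor on the fiber. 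Membership in $\F$ forces this divisor to contain $\sigma_1 + \ldots + (g-n+2)\sigma_i + \ldots + \sigma_n$, which already has total degree $(n-1) + (g-n+2) = g+1$. Hence the two divisors coincide and (X.2) holds.

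The remaining work---openness of (a) and (b), and verifying that the two constructions are mutually inverse $2$-morphisms of stacks---is formal. The only non-routine ingredient is the degree-counting argument for (X.2); this is precisely the reason that $\F$ is defined using principal parts of order $g-n+1$ at $\sigma_i$, so that the numerics exactly exhaust the intersection $C \cap \pi_2^{-1}(\pi_2(\sigma_1))$ and force the equality of effective divisors.
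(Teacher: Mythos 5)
Your proof is correct and follows essentially the same route as the paper: the paper's own argument simply observes that $\pp\F$ parametrizes tuples satisfying (X.1) and (X.2), and that the remaining conditions (irreducibility/nodality of $C$ and (X.3)) are open. Your write-up is in fact slightly more careful on one point the paper takes for granted, namely that membership in $\F$ only gives containment of divisors in (X.2) and that the equality requires the degree count $(n-1)+(g-n+2)=g+1$ together with the fact that an irreducible $(g+1,2)$ curve cannot contain the horizontal fiber.
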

\begin{proof}
The bundle $\pp\F$ over $\B_{g,n}$ parametrizes $(V, W, C, \sigma_1, \ldots, \sigma_n)$ satisfying (X.1) and (X.2) in Section \ref{Xsec}. The additional conditions that define $\X_{g,n,i}$ are that $C$ is a family of irreducible nodal curves and (X.3), both of which are open conditions.
\end{proof}

\subsubsection{The strata $W_{i-1}$ when $n \leq g$}
Using a similar ideal, we can realize $W_{i-1} \subset \I_{g,n}^{\circ, i}$ as the intersection of $\I_{g,n}^{\circ, i}$ with a subbundle $\pp \F_{i-1} \subset \pp \F$ for an appropriate vector subbundle $\F_{i-1} \subset \F$. The $j^{\mathrm{th}}$ marked point will be Weierstrass if the equation for $C$ is tangent to the vertical fiber through $\sigma_j$. This means we need to consider the evaluation map
\begin{equation} \label{evi}\mathrm{ev}_{i-1}: \phi^* \E \to \bigoplus_{1 \leq j \leq i-1} \eta_j^* P^1_{\pp \V \times \pp \W/\pp \V}(\N) \oplus \eta_i^*P^{g-n+1}_{\pp \V \times \pp \W/\pp \W}(\N) \oplus \bigoplus_{i+1\leq j \leq n} \eta_j^*\N. \end{equation}

\begin{figure}[h!]
    \centering
     \includegraphics[width=4in]{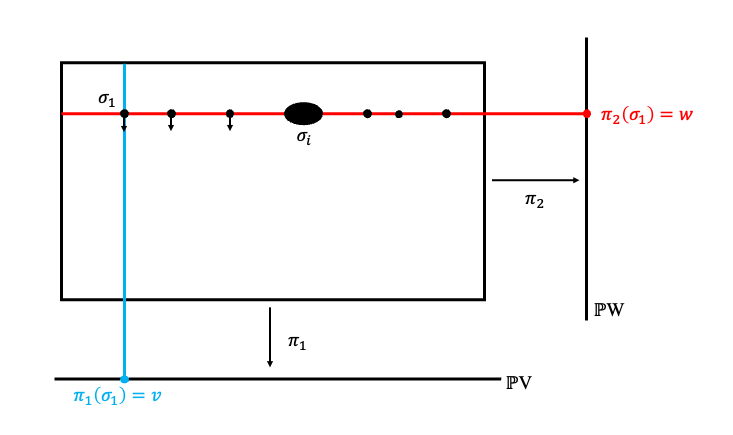}
    \caption{The points to the left of $\sigma_i$ are Weierstrass points, indicated by the vertical tangent arrows to each point.}
    \label{weierstrassfig}
\end{figure}

\begin{lem}
The evaluation map $\mathrm{ev}_{i-1}$ in \eqref{evi} is surjective.
\end{lem}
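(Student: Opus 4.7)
The plan is to verify surjectivity fiberwise over $\B_{g,n}$ (which by Nakayama suffices) and to exploit the fact that, by condition (X.2), all of the sections $\sigma_1, \ldots, \sigma_n$ lie on the single horizontal fiber $H := \pi_2^{-1}(\pi_2(\sigma_1))$. A quick rank count shows that the target of $\mathrm{ev}_{i-1}$ has rank $2(i-1) + (g-n+2) + (n-i) = g+i$.

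Working on a fiber corresponding to sections $\sigma_j = (a_j, b_1)$, the plan is to factor the evaluation through an auxiliary surjection
\[\alpha: E \twoheadrightarrow H^0(\pp^1, \O(g+1))^{\oplus 2}, \qquad F \mapsto \left(F|_H,\ \partial_y F|_H \right),\]
where $\partial_y$ denotes the derivative along the $\pp W$-direction (interpreted as the principal-parts restriction $E \to H^0(H, P^1_{\pp V \times \pp W/\pp V}(\N)|_H)$ followed by splitting off the jump along $H$). The map $\alpha$ is surjective because $\ker \alpha$ consists of bidegree $(g+1,2)$ forms vanishing to order $2$ along $H$; this kernel has dimension $g+2$, matching the count $3g+6 - 2(g+2) = g+2$.

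Then $\mathrm{ev}_{i-1}$ factors through $\alpha$: the horizontal $(g-n+1)$-jet at $\sigma_i$ and the values of $F$ at each $\sigma_j$ are all read off from $F|_H$, while each Weierstrass condition at $\sigma_j$ for $j < i$ contributes the additional vertical derivative $\partial_y F(\sigma_j)$, which is read off from $\partial_y F|_H$. So it remains to check that the two induced restriction maps on $H^0(\pp^1, \O(g+1))^{\oplus 2}$ are surjective. For the first copy, we restrict a degree $g+1$ form in $x$ to the length $(g-n+2) + (n-1) = g+1$ subscheme of $\pp^1$ with multiplicity $g-n+2$ at $a_i$ and multiplicity $1$ at each $a_j$ for $j \neq i$; this is surjective because the points $a_1, \ldots, a_n$ are distinct by (X.1) and the total length $g+1$ is strictly less than $g+2 = \dim H^0(\pp^1, \O(g+1))$. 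For the second copy, we restrict a degree $g+1$ form to the reduced subscheme $\{a_1, \ldots, a_{i-1}\}$, which is surjective since $i - 1 \leq n - 1 \leq g - 1$.

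The main obstacle is really just identifying this factorization correctly: the previous Lemma \ref{evs} required only the first coordinate of $\alpha$, whereas allowing Weierstrass tangency conditions at $\sigma_1, \ldots, \sigma_{i-1}$ forces us to track the normal derivative along $H$ as well, and the key observation is that a Weierstrass condition at $\sigma_j$ splits cleanly into the values $F(\sigma_j)$ and $\partial_y F(\sigma_j)$, which live in the two different factors of $H^0(\pp^1, \O(g+1))^{\oplus 2}$ and are therefore automatically independent of the tangency conditions at $\sigma_i$.
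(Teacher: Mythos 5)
Your proof is correct and is essentially the paper's argument: in the coordinates $F = P(x_0,x_1)y_0^2 + Q(x_0,x_1)y_0y_1 + R(x_0,x_1)y_1^2$ of \eqref{PQR}, your map $\alpha$ is exactly $F \mapsto (P,Q)$, and your two surjectivity checks (the length-$(g+1)$ evaluation of $P$ and the evaluation of $Q$ at $i-1 \le g$ distinct points) are the same ones the paper performs. The only difference is presentational — you phrase the second coordinate as a normal derivative along $H$ rather than naming $Q$ directly — which does not change the substance.
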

\begin{proof}
Let us describe the map in terms of the vector space $E$ with coordinates as in \eqref{PQR}. As in Lemma \ref{evs}, evaluation in $\eta_j^*\N$ is the equivariant version of evaluating $P(x_0, x_1)$ at various points along $\pp^1$. The evaluation in $\eta_i^*P^{e}_{\pp \V \times \pp \W/\pp \W}(\N)$ is evaluating $P(x_0, x_1)$ in an $e^{\mathrm{th}}$ order neighborhood around the $i^{\mathrm{th}}$ point in $\pp^1$. 

On the other hand, evaluation in the rank $2$ bundle $\eta_j^*P^1_{\pp \V \times \pp \W/\pp \V}(\N)$ corresponds to evaluating $P(x_0, x_1)$ and $Q(x_0, x_1)$ at the $j^{\mathrm{th}}$ point on $\pp^1$. Since $Q(x_0, x_1)$ has degree $g+1$, the evaluation map at $i - 1 \leq g$ points is surjective.
\end{proof}

There are natural surjections $\eta_j^* P^1_{\pp \V \times \pp \W/\pp \V}(\N)  \to \eta_j^*\N$, so
the target of \eqref{eeev} is a quotient of the target of \eqref{evi}. Hence,
$\F_{i-1} := \ker(\mathrm{ev}_{i-1})$ is a subbundle of $\F = \ker(\mathrm{ev})$.
Moreover, $W_{i-1} \subset \I_{g,n}^{\circ, i}$ is the intersection of $\I_{g,n}^{\circ, i}$ with $\pp \F_{i-1} \subset \pp \F$, so we obtain a diagram where the square at the top is fibered:
\begin{equation}
\begin{tikzcd}
W_{i-1} \arrow{r} \arrow{d} &\pp \F_{i-1} \arrow{d} \\
 \I_{g,n}^{\circ,i} \arrow{r} & \pp \mathcal{F} \arrow{d} \\
  & \B_{g,n} \arrow{r} & \mathcal{A}_{g,n} \arrow{d} \\
&&  \BPU \times \BPU
\end{tikzcd}
\end{equation}
Above, the vertical maps are projective/affine bundles and the horizontal maps are open inclusions.

\begin{lem} \label{dglem}
We have $A^*(\I_{g,n}^{\circ, i})$ is generated by divisors, and
$A^*(W_{i-1})$ is generated by restrictions of divisors from $A^*(\I_{g,n})$.
\end{lem}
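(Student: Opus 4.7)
The plan is to climb the tower of morphisms built in the preceding subsections, verifying at each level that the Chow ring in play is generated by divisor classes, and then checking that these classes descend to divisors on $\I_{g,n}$. At the bottom, $A^*(\BPU \times \BPU)$ is a polynomial ring in degree-$1$ generators: $\PU$ is solvable (a $\gg_a$-extension of $\gg_m$), so its classifying stack has rational Chow ring generated by first Chern classes of characters, identifiable with Chern classes of line-bundle summands of the universal bundles $\V$ and $\W$. Next, $\mathcal{A}_{g,n} \to \BPU \times \BPU$ is an iterated affine bundle, since each factor $\pp \V \smallsetminus v$ (respectively $\pp \W \smallsetminus w$) is the complement of a section in a $\pp^1$-bundle and hence an $\aa^1$-bundle. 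Affine bundles induce isomorphisms on rational Chow, so $A^*(\mathcal{A}_{g,n})$ is divisor-generated, and excision carries this over to the open substack $\B_{g,n} \subset \mathcal{A}_{g,n}$. The projective bundle theorem for $\pp \F \to \B_{g,n}$ adds one further generator $\zeta := c_1(\O_{\pp \F}(1))$, again a divisor. Since $\I_{g,n}^{\circ, i} \hookrightarrow \pp \F$ is open by Lemma \ref{oplem}, $A^*(\I_{g,n}^{\circ, i})$ is divisor-generated as a quotient of $A^*(\pp \F)$; this proves the first claim.

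For $W_{i-1}$, I would repeat the argument on the closed subbundle $\pp \F_{i-1} \subset \pp \F$: the projective bundle theorem shows $A^*(\pp \F_{i-1})$ is generated by pullbacks of divisors from $\B_{g,n}$ together with the tautological class $\zeta_{i-1} := c_1(\O_{\pp \F_{i-1}}(1))$. The equality $W_{i-1} = \I_{g,n}^{\circ,i} \cap \pp \F_{i-1}$ realizes $W_{i-1}$ as an open substack of $\pp \F_{i-1}$, so $A^*(W_{i-1})$ is divisor-generated as well.

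It remains---and this is the step requiring the most care---to identify these divisors with restrictions of classes from $\I_{g,n}$. The closed inclusion $\pp \F_{i-1} \hookrightarrow \pp \F$ and the open inclusion $\I_{g,n}^{\circ, i} \hookrightarrow \pp \F$ fit into a Cartesian square with $W_{i-1}$ at the corner, so pullbacks commute. The divisor pullbacks from $\B_{g,n}$ are evidently restrictions of divisors on $\pp \F$, and the classical subspace convention gives $\O_{\pp \F_{i-1}}(1) = \O_{\pp \F}(1)|_{\pp \F_{i-1}}$, whence $\zeta_{i-1} = \zeta|_{\pp \F_{i-1}}$. Consequently, every generator of $A^*(W_{i-1})$ arises from a divisor on $\pp \F$, and via the open inclusions descends first to a divisor on $\I_{g,n}^{\circ, i}$ and then (by openness of $\I_{g,n}^{\circ, i} \subset \I_{g,n}$ in codimension $1$) to a divisor on $\I_{g,n}$. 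The conceptual obstacle is precisely this bookkeeping, since $W_{i-1}$ is closed in $\I_{g,n}^{\circ, i}$ but open in $\pp \F_{i-1}$; once the Cartesian square is in place, everything becomes formal.
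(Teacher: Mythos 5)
Your proposal is correct and follows essentially the same route as the paper: climb the tower $\BPU\times\BPU \to \mathcal{A}_{g,n} \to \B_{g,n} \to \pp\F$ via affine/projective bundle theorems and excision for the first claim, and for the second use that $\O_{\pp\F}(1)$ restricts to $\O_{\pp\F_{i-1}}(1)$ so that $A^*(\pp\F)\to A^*(W_{i-1})$ is surjective and factors through $A^*(\I_{g,n}^{\circ,i})$, with the final descent to $\I_{g,n}$ being exactly the paper's appeal to surjectivity of restriction to an open substack (your ``openness in codimension~$1$'' phrasing is just excision in disguise).
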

\begin{proof}
By excision, to prove the first claim, it suffices to show $A^*(\pp \F)$ is generated by divisors. By the projective bundle theorem, this follows if $A^*(\B_{g,n})$ is generated by divisors. By excision again, this follows if $A^*(\mathcal{A}_{g,n})$ is generated by divisors. But $\mathcal{A}_{g,n}$ is an affine bundle over $\BPU \times \BPU$, so it suffices to show that $A^*(\BPU \times \BPU)$ is generated by divisors, which we see as follows.

The map $\PU\rightarrow \gg_m \ltimes \gg_a$
\[
A=\begin{pmatrix}
a & b \\
0 & c
\end{pmatrix}\mapsto (a/c,b/c)
\]
is an isomorphism. This induces a natural map 
\[
\mathrm{B}\gg_m\times \mathrm{B}\gg_m\rightarrow \BPU\times \BPU,
\]
which is an affine bundle with fiber $\mathbb{A}^2$, so the pullback map induces an isomorphism on Chow rings. The Chow ring $A^*(\mathrm{B}\gg_m\times \mathrm{B}\gg_m)$ is generated by divisors, corresponding to the first Chern classes of the two universal line bundles. 

To prove the second claim, note that $\O_{\pp \F}(1)$ restricts to $\O_{\pp \F_{i-1}}(1)$. Thus, by the projective bundle theorem, $A^*(\pp \F) \to A^*(\pp \F_{i-1})$ is surjective. By excision $A^*(\pp \F_{i-1}) \to A^*(W_{i-1})$ is surjective. Hence, $A^*(\pp \F) \to A^*(W_{i-1})$ is surjective. But since the top square commutes, all classes in the image of $A^*(\pp \F) \to A^*(W_{i-1})$ are in the image of $A^*(\I_{g,n}^{\circ,i}) \to A^*(W_{i-1})$. Finally, $A^*(\I_{g,n}) \to A^*(\I_{g,n}^{\circ, i})$ is surjective by excision, so $A^*(\I_{g,n}) \to  A^*(\I_{g,n}^{\circ, i}) \to A^*(W_{i-1})$ is surjective.
\end{proof}

We now conclude the proof of Theorem \ref{divgen} in the case $n \leq g$
\begin{proof}[Proof of Theorem \ref{divgen} over $\cc$ when $n \leq g$]
By Theorem \ref{Scavia}, it suffices to show that $A^*(\I_{g,n})$ is generated by divisors.
By Lemma \ref{dglem}, we have that $W_i$ is generated by restrictions of divisors from $\I_{g,n}$ for $i \leq n-1$. (Note $W_0 = \I_{g,n}^{\circ, 1}$.) Lemma \ref{Wn} shows $W_n$ is also generated by restrictions of divisors from $\I_{g,n}$. The fundamental class of each $W_i$ is a product of divisors (Lemma \ref{fc}), so we conclude that $A^*(\I_{g,n}^\circ) = A^*(W_0 \cup W_1 \cup \cdots \cup W_n)$ is generated by divisors for $n \leq g$.

Now we proceed by induction. The base case is that $A^*(\I_{g,0})$ is generated by divisors (Lemma \ref{0pt}).
Suppose we know that $A^*(\I_{g,n-1})$ is generated by divisors. By Proposition \ref{induction}, we know $A^*(D_{ij})$ is generated by restrictions of divisors from $\I_{g,n}$ for each $i \neq j$. By the push-pull formula, all classes supported on $D_{ij}$ are products of divisors. Since $A^*(\I_{g,n}^\circ)$ is generated by divisors, excision shows that $A^*(\I_{g,n})$ is generated by divisors.
\end{proof}

\subsection{The case $g+1 \leq n \leq 2g+6$} \label{ng1}
When $n \geq g+1$, we have no higher order vanishing conditions to worry about at $\sigma_i$. Recall also that $\I_{g,n}^{\circ, i} = \I_{g,n}^{\circ}$. To find the equations in $\phi^*\E$ that vanish along $\sigma_1, \ldots, \sigma_g,\sigma_{g+2},\ldots, \sigma_n$, we consider the evaluation map
\begin{equation} \label{ev2}
\mathrm{ev}: \phi^*\E \to \bigoplus_{\substack{1 \leq j \leq n \\ j \neq g+1}} \eta_j^*\N.
\end{equation}
The preimage of the zero section of \eqref{ev2} inside $\phi^* \E$ parametrizes global sections of $\N$ that vanish along $\sigma_1, \ldots, \sigma_g, \sigma_{g+2},\ldots, \sigma_n$. By construction then, there is a map $\I_{g,n}^{\circ} \to \pp (\mathrm{ev}^{-1}(0)) \subset \pp (\phi^*\E)$. This map is an open inclusion by a similar argument to Lemma \ref{oplem}.

The problem now is that $\mathrm{ev}^{-1}(0)$ is not necessarily a vector bundle because the rank of \eqref{eeev} can drop.
Let $\B^\circ \subset \B_{g,n}$ be the locus where $\eqref{eeev}$ is surjective. 
By semicontinuity, we know $\B^\circ \subset \B_{g,n}$ is open. (Of course, $\B^\circ$ may be empty, and it will be when $n$ is too large.)
Define
$\F \subset \phi^*\E|_{\B^\circ}$ to be the kernel of the restriction of \eqref{ev2} to $\B^\circ$.
By definition of $\B^\circ$, we know that $\F$ is a vector bundle over $\B^\circ$. There is a fibered square as below, and we wish to know if $\I_{g,n}^{\circ, i} \to \pp(\mathrm{ev}^{-1}(0)) \subset \pp (\phi^*\E)$ factors through $\pp \F$.
\begin{center}
\begin{tikzcd}
\I_{g,n}^{\circ} \ar[drr, bend left = 10] \ar[ddr, bend right = 10, swap, dashed, "?"] \arrow[dashed, "?"]{dr} \\
& \pp \F \arrow{r} \arrow{d} & \pp (\mathrm{ev}^{-1}(0)) \arrow{d} \\
& \B^\circ \arrow{r} & \B_{g,n}
\end{tikzcd}
\end{center}
Equivalently, we wish to know if $\I_{g,n}^{\circ} \to \B_{g,n}$ factors through $\B^\circ$.
The following lemma shows that this holds when $n$ is sufficiently small relative to $g$.

\begin{lem} \label{2g6}
If $n \leq 2g + 6$, then the image of $\I_{g,n}^{\circ} \to \B$ is contained in $\B^\circ$.
\end{lem}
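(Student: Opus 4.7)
The plan is to reduce the claim to Lemma~\ref{indconditions} applied fiberwise. Recall that $\B^\circ \subset \B_{g,n}$ is defined as the locus where the evaluation map $\mathrm{ev}$ in \eqref{ev2} is surjective, which (since source and target are locally free) is the open locus where $\mathrm{ev}$ is fiberwise surjective. So it suffices to check surjectivity at each geometric point coming from $\I_{g,n}^\circ$.

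Fix such a geometric point $(V, W, C, \sigma_1, \ldots, \sigma_n)$ in $\X_{g,n,i} \cong \I_{g,n}^\circ$. The fiber of $\phi^*\E$ is $E = H^0(\pp^1 \times \pp^1, \O(g+1, 2))$, and the fiber of $\eta_j^*\N$ is the one-dimensional space $\O(g+1,2)|_{\sigma_j}$. Condition (X.1) guarantees that the sections $\sigma_1, \ldots, \sigma_g, \sigma_{g+2}, \ldots, \sigma_n$ have pairwise distinct projections to the first factor of $\pp^1 \times \pp^1$, and in particular are pairwise distinct points of $\pp^1 \times \pp^1$. Hence the fiber of $\mathrm{ev}$ is canonically identified with the restriction map
\[
H^0(\pp^1 \times \pp^1, \O(g+1,2)) \longrightarrow H^0(\Gamma, \O(g+1,2)|_\Gamma),
\]
where $\Gamma := \sigma_1 + \cdots + \sigma_g + \sigma_{g+2} + \cdots + \sigma_n$ is a reduced effective divisor on $\pp^1 \times \pp^1$ of degree $n - 1$.

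By construction, $C \subset \pp^1 \times \pp^1$ is an irreducible bidegree $(g+1, 2)$ curve, and condition (X.3) ensures that each $\sigma_j$ lies in the smooth locus of $C$. Since the $\sigma_j$ are reduced and disjoint, $\Gamma$ is contained in the smooth locus of $C$. The degree bound $\deg\Gamma = n - 1 \leq 2g + 5$ is exactly the hypothesis of Lemma~\ref{indconditions}, which therefore applies to give the desired surjectivity. This holds at every geometric point in the image of $\I_{g,n}^\circ \to \B$, so the image lies in $\B^\circ$.

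There is no real obstacle; the work has already been done in Lemma~\ref{indconditions}, and the only content of this lemma is matching up the evaluation map \eqref{ev2} with the setting of that lemma and verifying the numerical bound $n - 1 \leq 2g + 5$. The discrepancy between the bound $n \leq 3g+6$ for rationality and $n \leq 2g+6$ here reflects exactly the distinction discussed in the introduction: every configuration of points arising from $\I_{g,n}^\circ$ must impose independent conditions on $|\O(g+1,2)|$, and Lemma~\ref{indconditions} only guarantees this when the number of points is bounded by $2g+5$.
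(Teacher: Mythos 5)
Your proof is correct and is essentially the paper's own argument: both reduce to Lemma~\ref{indconditions} by observing that a point in the image of $\I_{g,n}^\circ$ gives a reduced degree $n-1 \leq 2g+5$ divisor in the smooth locus of an irreducible $(g+1,2)$ curve, with the evaluation map \eqref{ev2} identified fiberwise with the restriction map of that lemma. Your version just spells out the fiberwise identification in more detail than the paper does.
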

\begin{proof}
Each point in the image of $\I_{g,n}^{\circ} \to \B_{g,n}$ corresponds to a collection of $n-1 \leq 2g+5$ points on $\pp^1 \times \pp^1$ that lie in the smooth locus of an irreducible $(g+1, 2)$ curve. (We have $n-1$ points because we are not tracking $\sigma_{g+1}$, see Section \ref{savings}.)
Thus, surjectivity follows from Lemma \ref{indconditions}.
\end{proof}

\begin{rem}[The case $n \geq 2g+7$]
When $n \geq 2g+7$, Lemma \ref{2g6} fails. The reason is because $n - 1 \geq 2g+6$ points on an irreducible $(g+1,2)$ curve can fail to impose independent conditions. 
Looking back at Lemma \ref{indconditions}, 
we see this occurs  when the line bundle $\omega_C \otimes N(-\Gamma)^\vee$ has non-trivial global sections. The first case of this is when $\omega_C \otimes N(-\Gamma) \cong \O$ which is the case $\Gamma$ consists of $2g+6$ points 
which are the intersection of the $(g+1,2)$ curve $C$ with a $(2, 2)$ curve.
\end{rem}

It follows from Lemma \ref{2g6}  that $\I_{g,n}^{\circ}$ is an open substack of $\pp \F$.
Thus, we have a diagram
\begin{equation}
\begin{tikzcd}
 \I_{g,n}^{\circ} \arrow{r} & \pp \mathcal{F} \arrow{d} \\
 &\B^{\circ} \arrow{r} & \B_{g,n} \arrow{r} & \mathcal{A}_{g,n} \arrow{d} \\
&&&  \BPU \times \BPU
\end{tikzcd}
\end{equation}
where the vertical maps are projective or affine bundles and the horizontal maps are open inclusions.
Essentially the same argument as in Lemma \ref{dglem} yields the following.
\begin{lem} \label{key}
When $n \leq 2g+6$, we have $A^*(\I_{g,n}^{\circ})$ is generated by divisors.
\end{lem}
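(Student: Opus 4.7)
The plan is to mimic the proof of Lemma \ref{dglem}, propagating the fact that $A^*(\BPU \times \BPU)$ is generated by divisors up through the tower of projective/affine bundles and open inclusions established in the paragraphs leading up to the statement. The key structural input is Lemma \ref{2g6}, which identifies $\I_{g,n}^\circ$ as an open substack of $\pp\F$, so that all the ambient classes we care about on $\I_{g,n}^\circ$ arise by pullback followed by restriction.

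More precisely, I would argue as follows. By excision, the surjection $A^*(\pp\F) \twoheadrightarrow A^*(\I_{g,n}^\circ)$ reduces the problem to showing that $A^*(\pp\F)$ is generated by divisors. Since $\pp\F \to \B^\circ$ is a projective bundle, the projective bundle theorem expresses $A^*(\pp\F)$ as a polynomial ring (modulo one relation) over $A^*(\B^\circ)$ in the divisor class $c_1(\O_{\pp\F}(1))$; thus it suffices to show $A^*(\B^\circ)$ is generated by divisors. Applying excision to the open inclusion $\B^\circ \subset \B_{g,n}$, we reduce to showing $A^*(\B_{g,n})$ is generated by divisors. Since $\B_{g,n}$ is an open substack of $\mathcal{A}_{g,n}$, excision again reduces us to $A^*(\mathcal{A}_{g,n})$.

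Now $\mathcal{A}_{g,n}$ is built from $\BPU \times \BPU$ by iterated affine bundles (removing the sections $v$ and $w$ from the appropriate factors of $\pp\V$ and $\pp\W$), so homotopy invariance gives $A^*(\mathcal{A}_{g,n})\cong A^*(\BPU\times \BPU)$. Finally, as recalled in the proof of Lemma \ref{dglem}, the isomorphism $\PU \cong \gg_m \ltimes \gg_a$ produces an affine bundle $\mathrm{B}\gg_m \times \mathrm{B}\gg_m \to \BPU \times \BPU$, so $A^*(\BPU\times \BPU) \cong A^*(\mathrm{B}\gg_m \times \mathrm{B}\gg_m)$ is generated by the first Chern classes of the two universal line bundles, both of which are divisors.

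There is no real obstacle here: the substantive content of the lemma is entirely packaged into Lemma \ref{2g6}, which guarantees that $\F$ is a vector bundle over $\B^\circ$ and that $\I_{g,n}^\circ$ genuinely embeds as an open substack of $\pp\F$. Once that geometric input is in hand, the Chow-theoretic chase through projective bundle theorem and excision is mechanical, and the divisor-generation statement propagates all the way up from $\BPU \times \BPU$ to $\I_{g,n}^\circ$ without additional work.
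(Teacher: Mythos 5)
Your proposal is correct and is essentially identical to the paper's argument: the paper proves this lemma by noting that ``essentially the same argument as in Lemma \ref{dglem}'' applies to the tower $\I_{g,n}^{\circ} \subset \pp\F \to \B^\circ \subset \B_{g,n} \subset \mathcal{A}_{g,n} \to \BPU \times \BPU$, which is exactly the chain of excisions, projective bundle theorem, and affine-bundle reductions you describe. You also correctly identify that the substantive input is Lemma \ref{2g6}, which ensures $\I_{g,n}^\circ$ lands in $\pp\F$ over $\B^\circ$.
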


\begin{proof}[Proof of Theorem \ref{divgen} over $\cc$ when $g+1 \leq n \leq 2g+6$]
Again,
we argue by induction. Suppose that we know $A^*(\I_{g,n-1})$ is generated by divisors. Then the push-pull formula together with Proposition \ref{induction} shows that classes supported on $D_{ij}$ are products of divisors. By Lemma \ref{key}, we know $A^*(\I_{g,n}^{\circ})$ is generated by divisors, so the result follows by excision.
\end{proof}

\subsection{Rationality when $n \leq 3g+6$} A key step in the previous section was knowing that the image of $\I_{g,n}^\circ$ was contained in the locus $\B^\circ \subset \B_{g,n}$ where the evaluation map was surjective.
In this section, we explain how knowing the weaker fact that $\B^\circ$ is non-empty shows that $\I_{g,n}^\circ$ is rational.

\begin{lem}
Let $\B^\circ \subset \B_{g,n}$ be the locus where the evaluation map \eqref{ev2} is surjective. If $n \leq 3g+6$, then $\B^\circ$ is non-empty.
\end{lem}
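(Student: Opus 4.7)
The plan is to prove nonemptiness of $\B^\circ$ by exhibiting a single geometric configuration at which the evaluation map \eqref{ev2} is surjective; since surjectivity is an open condition on $\B_{g,n}$, one example suffices. Fix $V = W = \mathcal{O}^{\oplus 2}$ over $\Spec k$, so $\pp V \cong \pp W \cong \pp^1$. A point of $\B_{g,n}$ over $\Spec k$ is then just a collection of $n-1$ points on $\pp^1 \times \pp^1$: sections $\sigma_1, \ldots, \sigma_g$ on a common horizontal line $H$, and sections $\sigma_{g+2}, \ldots, \sigma_n$ off $H$, no two of the $n-1$ points sharing a vertical line. The map \eqref{ev2} becomes the ordinary evaluation
\[H^0(\pp^1 \times \pp^1, \O(g+1, 2)) \longrightarrow \bigoplus_{j \neq g+1} k,\]
whose surjectivity is equivalent to the $n - 1$ chosen points imposing independent conditions on bidegree $(g+1, 2)$ forms. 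Note $\dim H^0(\pp^1 \times \pp^1, \O(g+1, 2)) = 3g+6$, so numerically this is possible precisely when $n - 1 \leq 3g+6$.

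Take $H = \{y_1 = 0\}$ and choose $g$ distinct points on $H$. The exact sequence $0 \to \O(g+1, 1) \to \O(g+1, 2) \to \O(g+1, 2)|_H \to 0$ shows that the restriction $H^0(\pp^1\times\pp^1, \O(g+1, 2)) \to H^0(H, \O(g+1))$ is surjective, so the $g$ chosen points impose $g$ independent conditions (since $g \leq g + 2 = h^0(H, \O(g+1))$). The subspace $V_g \subset H^0(\pp^1\times\pp^1, \O(g+1,2))$ of forms vanishing at these $g$ points has dimension $2g + 6$.

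Next, build up $\sigma_{g+2}, \ldots, \sigma_n$ inductively. Let $V_g^{(j)} \subset V_g$ be the subspace vanishing at the additional points chosen so far, with $V_g^{(g+1)} := V_g$. Assuming the first $j - g - 1$ additional points impose independent conditions on $V_g$, we have
\[\dim V_g^{(j)} = 3g + 7 - j \geq 3g + 8 - n \geq 2,\]
since $n \leq 3g + 6$. Because $V_g^{(j)} \neq 0$, its base locus is a proper closed subvariety of $\pp^1 \times \pp^1$. Its complement meets the nonempty open set of points lying off $H$, off the vertical lines through previously chosen points, and distinct from all previous points. Pick $\sigma_{j+1}$ there (possible since $k$ is algebraically closed and we are avoiding a finite union of proper subvarieties). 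Then some form in $V_g^{(j)}$ does not vanish at $\sigma_{j+1}$, so $\sigma_{j+1}$ imposes an additional independent condition, and the induction continues until $j + 1 = n$.

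The main obstacle is really just the dimension count, and it is precisely where the hypothesis $n \leq 3g + 6$ enters: it ensures $\dim V_g^{(j)} \geq 2$ at every step of the induction, so the base locus is a proper subvariety and a valid next point exists. The resulting configuration clearly lies in $\B_{g,n}$ and has surjective evaluation by construction.
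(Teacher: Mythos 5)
Your proof is correct and takes essentially the same route as the paper: reduce nonemptiness to finding one configuration of $n-1 \leq 3g+5$ points imposing independent conditions on $\O(g+1,2)$, note that $g$ distinct points on a line of the ruling do so, and then greedily add points outside the base locus of the remaining linear system, with the hypothesis $n \leq 3g+6$ guaranteeing that system stays nonzero at every step. Your version just spells out the surjectivity of restriction to $H$ and the dimension bookkeeping a bit more explicitly.
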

\begin{proof}
This is equivalent to showing that a general collection of $n-1 \leq 3g+5$ points on $\pp^1 \times \pp^1$ with $g$ on the same line of the ruling impose independent conditions on $\O(g+1, 2)$. (Note that this is much weaker than our previous situation where we wanted to know \emph{every} collection of $n-1$ points lying on an irreducible $(g+1,2)$ curve impose independent conditions on $\O(g+1,2)$.)
We know any $g$ distinct points $p_1, \ldots, p_g$ on a line of the ruling impose independent conditions. Now continue choosing points $p_i$ not in the base locus of the linear system of $(g+1,2)$ curves vanishing at $p_1, \ldots, p_{i-1}$. This is possible so long as the kernel of the evaluation map $H^0(\pp^1 \times \pp^1, \O(g+1,2)) \to \bigoplus_{j=1}^{i-1} \O(g+1,2)|_{p_j}$ has dimension at least $1$. This holds at each step since we have $n - 1 \leq 3g+5$ points and  $h^0(\pp^1 \times \pp^1, \O(g+1,2)) = 3g+6$.
\end{proof}

Now, let $\F \subset \phi^*\E|_{\B^\circ}$ be the kernel of \eqref{ev2} restricted to $\B^\circ$. We obtain a diagram where both squares are fibered
\begin{center}
\begin{tikzcd}
U \arrow{r} \arrow{d} &\I_{g,n}^{\circ} \arrow{d} \\
\pp \F \arrow{r}  \arrow{d} & \pp(\mathrm{ev}^{-1}(0)) \arrow{d} \\
 \B^\circ \arrow{r} & \B_{g,n}
\end{tikzcd}
\end{center}
Although $\I_{g,n}^\circ \to \pp(\mathrm{ev}^{-1}(0))$ does not factor through $\pp \F$, since $\B^\circ \subset \B_{g,n}$ is non-empty there is still a dense open $U \subset \I_{g,n}^\circ$ which does factor through $\pp \F$. Moreover, because a general collection of $n-1 \leq 3g+5$ points impose indpendent conditions on $\O(g+1,2)$, the inclusion $U \subset \pp \F$ is open. Finally, it remains to show that $U$ is rational.

\begin{lem}
If $n \leq 3g+6$, then $U$ defined above is rational. Hence, $\I_{g,n}$ is rational.
\end{lem}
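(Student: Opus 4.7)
Since $U$ is dense open in $\I_{g,n}$, it suffices to show $U$ is rational. By construction, $U$ fits into the tower
\[
U\subset \pp\F\to \B^\circ\subset \mathcal{A}_{g,n}\to \BPU\times\BPU,
\]
whose successive arrows are an open inclusion, a Zariski-locally trivial $\pp^{N-1}$-bundle (coming from the honest vector bundle $\F$), an open inclusion, and an affine bundle. To prove $|U|$ rational it is therefore enough to produce a rational variety birational to it.

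The plan is to trivialize the stacky base and then use a slice argument. Pulling the entire tower back along the trivial $(\PU\times\PU)$-torsor $\Spec k\to \BPU\times\BPU$ given by $V=W=k^{\oplus 2}$ with $v=w=[1:0]$ yields an actual tower of schemes. In this trivialization $\tilde{\mathcal{A}}_{g,n}$ becomes the affine space $\mathbb{A}^{n-1}$ for $n\le g$, or $\mathbb{A}^{2n-g-3}$ for $n\ge g+1$ (recording the free horizontal and vertical coordinates of $\sigma_2,\dots,\sigma_n$, with $\sigma_{g+1}$ suppressed as in Section \ref{savings}); $\tilde \B^\circ$ is an open subvariety; $\pp\tilde\F$ is a Zariski-locally trivial $\pp^{N-1}$-bundle over it; and $\tilde U$ is an open subvariety of $\pp\tilde\F$. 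In particular $\tilde U$ is rational.

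It remains to descend rationality to $|U|=\tilde U/(\PU\times\PU)$. Writing $\PU=\mathbb{G}_m\ltimes\mathbb{G}_a$, the first factor acts on each horizontal coordinate by $x\mapsto ax+b$ and the second acts analogously on each vertical coordinate (and trivially on all coordinates when $n\le g$). Whenever two horizontal coordinates $x_1\ne x_2$ are present, the conditions $x_1=0$ and $x_2=1$ determine $(a,b)$ uniquely and so carve out a slice for the first $\PU$; the analogous normalization handles the second $\PU$ when two vertical coordinates are present. The preimage of this slice in $\pp\tilde\F$ is a $\pp^{N-1}$-bundle over an open in affine space, and its intersection with $\tilde U$ is birational to $|U|$, hence rational.

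The main obstacle is the small-$n$ edge cases, in particular $n\le 3$ and the borderline $n\in\{g+1,g+2\}$, where one of the $\PU$-factors has fewer than two coordinates to act on. There the relevant factor either acts trivially (as the second $\PU$ does whenever $n\le g$) or acts transitively on its single coordinate, so its contribution to the coarse quotient is a point and only the other factor's slice is needed; a direct case check within the range $g\ge 2$, $n\le 3g+6$ confirms that in every case $|U|$ is birational to an open subvariety of a projective bundle over an affine space, and therefore rational.
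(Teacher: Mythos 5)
Your overall strategy --- trivialize the $\BPU\times\BPU$ base, observe that the resulting tower is an open subset of a Zariski-locally trivial projective bundle over an open subset of affine space, and then descend by carving out a slice for the $\PU\times\PU$-action using two normalized marked points per factor --- is exactly the paper's argument in its main case. The gap is in your treatment of the edge cases. When a $\PU$-factor has fewer than two coordinates to act on in the base (for the second factor this happens whenever $n\le g+2$, since only $\sigma_{g+2},\dots,\sigma_n$ carry vertical coordinates; for the first factor when $n\le 3$), you assert that the factor ``acts trivially or transitively on its single coordinate, so its contribution to the coarse quotient is a point and only the other factor's slice is needed.'' This is false as stated: even when a $\PU$-factor acts trivially (or transitively with residual stabilizer $\gg_m$) on the base $\tilde{\mathcal{A}}_{g,n}$, it still acts nontrivially on the fibers of $\pp\tilde\F$, because $\E=\Sym^{g+1}\V^\vee\otimes\Sym^2\W^\vee$ carries a nontrivial representation of \emph{both} factors. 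So in those cases $|U|$ is not birational to an open subvariety of a projective bundle over affine space via your slice; it is birational to a quotient of such a bundle by a residual connected solvable group acting linearly on the fibers. Such quotients are in fact rational (e.g.\ by Miyata's theorem), but that is an additional argument you neither state nor supply, and your ``direct case check'' as written does not close this hole.

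The paper avoids the issue entirely: since Casnati already proves rationality for $n\le 2g+8$, it assumes $n\ge 2g+9>g+3$, which guarantees that $\sigma_{g+2}$ and $\sigma_{g+3}$ exist and (on a dense open) have distinct images under both projections, so both copies of $\PU$ can be used up simultaneously by normalizing $\sigma_1=[1{:}0]\times[1{:}0]$, $\sigma_{g+2}=[1{:}1]\times[1{:}1]$, $\sigma_{g+3}=[0{:}1]\times[0{:}1]$, leaving the remaining markings parametrized by an open subset of $(\pp^1)^{g-1}\times(\pp^1\times\pp^1)^{n-g-3}$. Your proof becomes correct if you either add this appeal to Casnati's theorem to dispose of $n\le 2g+8$, or justify the rationality of the residual solvable quotients in your edge cases.
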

\begin{proof}
Our $U$ is an open subset of a projective bundle $\pp \F$ over $\B^\circ$, so it suffices to show that $\B^\circ$ is rational.
Since Casnati has proved rationality for $n \leq 2g+8$, we can assume $n \geq 2g+9 > g+3$. This ensures that there are at least two marked points which are not in the same fiber as $\sigma_1$.
On the open subset of $\B^\circ$ where $\pi_2(\sigma_{g+2}) \cap \pi_2(\sigma_{g+3}) = \varnothing$, we can ``use up" the $\PU \times \PU$ action to fix \[\sigma_1 = [1:0] \times [1:0], \sigma_{g+2} = [1:1] \times [1:1], \quad\text{and}\quad \sigma_{g+3} = [0:1] \times [0:1].\]
The other markings are then parametrized by an open subset of $(\pp^1)^{g-1} \times (\pp^1 \times \pp^1)^{n-g-3}$, which is rational.
\end{proof}

\section{Upgrading to a proof in characteristic $\neq 2$} \label{pp}
In the course of the proof of Theorem \ref{divgen} and Proposition \ref{rprop} so far, we relied on the following two facts that Scavia has shown to hold over $\cc$:
\begin{itemize}
    \item[(a)] The classes $\psi_1, \ldots, \psi_n \in A^1(\H_{g,n})$ are independent (so that there are no more relations in Proposition \ref{rprop})
    \item[(b)] The classes $\psi_1, \ldots, \psi_n, \delta$ span $A^1(\I_{g,n})$ when $n \leq 2g+6$ (to show these are the codimension $1$ generators in Theorem \ref{divgen}). Equivalently, the classes $\psi_1, \ldots, \psi_n$ span $A^1(\H_{g,n})$ when $n \leq 2g + 6$.
\end{itemize}
With the set up we have developed, it is not too much more work to establish (a) and (b) in any characteristic $\neq 2$, thereby proving Theorem \ref{divgen} more generally.
This also provides a new proof of Scavia's result that the $\psi$ classes are a basis for $A^1(\H_{g,n})$ when $n \leq 2g+6$, which holds in positive characteristic.

\subsection{Independence of $\psi$ classes} \label{psisec}
Our proof of independence is geometric, using test curves on a partial compactification of $\H_{g,n}$. The method is in part inspired by work of Edidin--Hu \cite{EdidinHu}, which used test curves to determine the classes of $\overline{D}_{11} \in A^1(\overline{\H}_{g,1})$ and $\overline{D}_{12}
\in A^1(\overline{\H}_{g,2})$. However, we shall only need the simplest families of test curves introduced there (and their generalizations to more marked points).

\begin{rem}
Edidin--Hu work over $\mathbb{C}$ in order to make use of Scavia's result that gives a basis for $A^1(\overline{\H}_{g,n})$. However, the test curves we use make sense over any ground field of characteristic $\neq 2$. Independence of divisors can be proved so long as we have enough test curves. In Section \ref{spanning}, we will show that the $\psi$ classes span $A^1(\H_{g,n})$ by dimension counting and excision.
\end{rem}

The following lemma is standard, but we include a proof for the convenience of the reader.

\begin{lem} \label{psitest}
Suppose $X \to C$ is a family of pointed curves with smooth total space $X$ and sections $\sigma_i : C \to X$. Let $a: C \to \Mb_{g,n}$ be the induced map. Then $\deg(a^*\psi_i) = -[\sigma_i(C)]^2$.
\end{lem}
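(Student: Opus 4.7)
The plan is a direct computation using the definition $\psi_i = c_1(p_i^*\omega_\pi)$ together with adjunction on the smooth surface $X$.

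First, I would observe that the construction of $\mathbb{L}_i = p_i^*\omega_\pi$ commutes with base change. So if $\tilde{a}: X \to \mathcal{C}_{g,n}$ is the map covering $a: C \to \Mb_{g,n}$, then $\tilde a^*\omega_\pi = \omega_{X/C}$, and pulling back along the $i^{\mathrm{th}}$ section gives $a^*\mathbb{L}_i = \sigma_i^*\omega_{X/C}$. Thus
\[\deg(a^*\psi_i) = \deg_C\!\bigl(\sigma_i^*\omega_{X/C}\bigr),\]
and the task is reduced to computing this degree on $C$.

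Next, I would use the relative dualizing sheaf identity $\omega_{X/C} = \omega_X \otimes \pi^*\omega_C^{-1}$, where $\pi: X \to C$ is the structure map. Since $X$ is smooth and $\sigma_i(C) \subset X$ is a smooth Cartier divisor (the image of a section), adjunction gives
\[\omega_{\sigma_i(C)} \;\cong\; \bigl(\omega_X \otimes \mathcal{O}_X(\sigma_i(C))\bigr)\big|_{\sigma_i(C)}.\]
Because $\sigma_i: C \to \sigma_i(C)$ is an isomorphism, pulling this back along $\sigma_i$ and using $\sigma_i^*\omega_{\sigma_i(C)} = \omega_C$ yields $\sigma_i^*\omega_X \cong \omega_C \otimes N^{-1}$, where $N := \sigma_i^*\mathcal{O}_X(\sigma_i(C))$ is the normal bundle of the section. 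Substituting into the formula for $\omega_{X/C}$,
\[\sigma_i^*\omega_{X/C} \;=\; \sigma_i^*\omega_X \otimes \omega_C^{-1} \;\cong\; N^{-1}.\]

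Finally, taking degrees, $\deg(a^*\psi_i) = \deg_C(N^{-1}) = -\deg_C(N) = -[\sigma_i(C)]^2$, where the last equality uses the standard identification of the self-intersection on the smooth surface $X$ with the degree of the normal bundle of $\sigma_i(C)$. There is no real obstacle here; the only subtlety is being careful that $X$ is smooth and $\sigma_i(C)$ is a Cartier divisor in $X$, which are exactly the hypotheses given. The statement is pointwise/fiberwise a well-known identity, and the argument above packages it for a family.
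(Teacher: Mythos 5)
Your proof is correct and follows essentially the same route as the paper: both arguments come down to identifying $\sigma_i^*\omega_{X/C}$ with the conormal bundle of the section and then using $\deg N_{\sigma_i(C)/X} = [\sigma_i(C)]^2$. The paper gets this identification from the relative tangent sequence restricted to the section, while you get it via adjunction for the divisor $\sigma_i(C)$ on the smooth surface $X$; this is only a cosmetic difference.
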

In this situation, we shall often abbreviate $[\sigma_i(C)]^2$ by $\sigma_i^2$.
\begin{proof}
We have 
\[
a^*\psi_i = -c_1(\sigma_i^* \T_{X/C}) = -(c_1(\sigma_i^*\T_C) - c_1(\T_C)) = -c_1(N_{\sigma_i(C)/X}) = -[\sigma_i(C)]^2. \qedhere
\]
\end{proof}

Let
$\widetilde{\H}_{g,n} \subset \overline{\mathcal{H}}_{g,n}$
denote the open subset of pointed hyperellipitc curves $(C, p_1, \ldots, p_n)$ so that $C$ is irreducible
or a union of an irreducible component of genus $g$ and a genus $0$ curve containing exactly two marked points. We write $\Delta_{ij} \subset \widetilde{\H}_{g,n}$ for the divisor where $p_i$ and $p_j$ are the two points on the genus $0$ component. Let $\delta_{ij} = [\Delta_{ij}] \in A^1(\widetilde{\H}_{g,n})$.

\begin{lem} \label{psi-indep}
The classes $\psi_1, \ldots, \psi_n$ and the boundary divisors $\delta_{ij}$ (for $i \neq j$) are independent in $A^1(\tilde{\H}_{g,n})$ for any $n$ and $g \geq 2$. In particular, $\psi_1, \ldots, \psi_n$ are independent in $A^1(\H_{g,n})$ for any $n$ and $g \geq 2$.
\end{lem}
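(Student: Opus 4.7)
The plan is to induct on $n$, combining test curves with restriction to the boundary divisors $\Delta_{ij}$.

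For each $i$, I would construct a test curve $T_i$ as the blowup of $C_0 \times C_0$ at the $n - 1$ points $(q_k, q_k)$ for $k \neq i$, where $C_0$ is a fixed general smooth hyperelliptic curve and the $q_k \in C_0$ are general. View $T_i$ as a proper family over $C_0$ via the second projection; the $i$-th section is the strict transform of the diagonal, and for $k \neq i$ the $k$-th section is the strict transform of $\{q_k\} \times C_0$. The fiber over $q_k$ has an exceptional $\pp^1$ tail containing $p_i$ and $p_k$, realizing a point of $\Delta_{ik}$. A routine self-intersection computation via Lemma \ref{psitest} yields $\psi_i \cdot T_i = 2g + n - 3$, $\psi_k \cdot T_i = 1$ and $\delta_{ik} \cdot T_i = 1$ for $k \neq i$, and $\delta_{kl} \cdot T_i = 0$ when $i \notin \{k, l\}$.

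To exploit the induction I would then compute the restrictions of the divisor classes to each $\Delta_{ij}$. Using rigidity of $(\pp^1; q', p_i, p_j)$, $\psi_i|_{\Delta_{ij}} = \psi_j|_{\Delta_{ij}} = 0$. Collapsing the $\pp^1$ tail and marking its attachment point $x$ as a new marked point identifies $\Delta_{ij} \cong \I_{g, n-1}$, under which $\psi_k|_{\Delta_{ij}} = \psi_k^{(n-1)}$ for $k \neq i, j$. Since $\widetilde{\H}_{g,n}$ allows only one $\pp^1$ tail, the divisors $\Delta_{ij}$ are pairwise disjoint, so $\delta_{kl}|_{\Delta_{ij}} = 0$ for $\{k, l\} \neq \{i, j\}$. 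Finally, the normal bundle formula $N_{\Delta_{ij}/\widetilde{\H}_{g,n}} \cong T_x C_0 \otimes T_{q'}\pp^1$ together with rigidity of the $\pp^1$ side gives $\delta_{ij}|_{\Delta_{ij}} = -\psi_x^{(n-1)}$.

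Given a hypothetical relation $\alpha = \sum_i a_i \psi_i + \sum_{i < j} b_{ij} \delta_{ij} = 0$ in $A^1(\widetilde{\H}_{g,n})$, for $n \geq 3$ I would restrict to each $\Delta_{ij}$ to obtain $\sum_{k \neq i, j} a_k \psi_k^{(n-1)} - b_{ij} \psi_x^{(n-1)} = 0$ in $A^1(\I_{g, n-1})$. The inductive hypothesis provides independence of the $\psi$ classes in $A^1(\H_{g, n-1})$, and hence in $A^1(\I_{g, n-1})$ via the surjection to $A^1(\H_{g, n-1})$; thus $a_k = 0$ for $k \neq i, j$ and $b_{ij} = 0$, and varying $(i, j)$ completes the inductive step. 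The base case $n = 1$ is immediate ($\widetilde{\H}_{g, 1} = \I_{g, 1}$, and Lemma \ref{1pt} gives $\psi_1 \neq 0$); for $n = 2$, restriction to $\Delta_{12}$ gives $b_{12} = 0$, and pairing with $T_1, T_2$ yields the system $(2g-1) a_1 + a_2 = 0$, $a_1 + (2g-1) a_2 = 0$, whose matrix has nonzero determinant $4g(g - 1)$ for $g \geq 2$, forcing $a_1 = a_2 = 0$. The hardest part will be the restriction computation $\delta_{ij}|_{\Delta_{ij}} = -\psi_x^{(n-1)}$, which requires the identification $\Delta_{ij} \cong \I_{g, n-1}$ to match tautological classes and relies on rigidity of the $\pp^1$ tail to kill the $\pp^1$-side contribution to the normal bundle; once this is in place, the test curves $T_i$ are needed essentially only for the $n = 2$ base case.
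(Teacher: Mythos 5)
Your proof is correct, but it takes a genuinely different route from the paper. The paper proves independence purely by test curves: it introduces a second family $T_{ij}$ (where $p_i$ roams and $p_j$ is its conjugate) in addition to your $T_i$, computes the full $(n+\binom{n}{2})\times(n+\binom{n}{2})$ intersection matrix against $\psi_1,\dots,\psi_n,\delta_{ij}$, and exhibits a change of basis making it block triangular with nonzero diagonal. Your argument instead inducts on $n$ by restricting a putative relation to each boundary divisor $\Delta_{ij}\cong \I_{g,n-1}$, using the standard facts $\psi_i|_{\Delta_{ij}}=\psi_j|_{\Delta_{ij}}=0$, $\delta_{ij}|_{\Delta_{ij}}=-\psi_x$, and disjointness of the $\Delta_{ij}$ inside $\widetilde{\H}_{g,n}$ (which holds precisely because $\widetilde{\H}_{g,n}$ permits only one rational tail with exactly two markings); these identifications are all legitimate here since $\Delta_{ij}$ is the transverse, reduced intersection of $\widetilde{\H}_{g,n}$ with the smooth boundary divisor $\Delta_{0,\{i,j\}}\subset\Mb_{g,n}$, so the normal bundle formula restricts. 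Your computations check out: $T_i\cdot\psi_i=2g+n-3$, $T_i\cdot\psi_k=1$, and the $n=2$ determinant $4g(g-1)\neq 0$; the $n=1$ base case could even avoid Lemma \ref{1pt} by pairing with $T_1$ directly ($\psi_1\cdot T_1=2g-2$). What each approach buys: yours needs only the simplest test curves (and only for $n\le 2$), trading the conjugate-point family $T_{ij}$ for standard boundary restriction formulas, and is arguably more conceptual; the paper's matrix computation is uniform in $n$ with no induction and produces explicit intersection numbers \eqref{i1}--\eqref{i4} that are reusable elsewhere (e.g.\ in the style of Edidin--Hu's divisor class computations).
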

\begin{proof}
In total, we have $n + {n \choose 2}$ divisors we wish to show are independent.
Let us now introduce $n + {n \choose 2}$ test curves in $\widetilde{\H}_{g,n}$ which we shall intersect with our divisors. Our test curves come in two families. Below, we compute the intersection number of each test curve with each divisor. Then we describe a change of basis which makes the intersection matrix upper triangular with nonzero diagonal entries, and so visibly full rank.

\subsubsection{The first family of test curves:} \label{ff} First, for each $i = 1, \ldots, n$, we let $T_i$ be the test curve where $p_i$ roams over the curve and $p_j$ for $j \neq i$ is a fixed non-Weierstrass point.
To build this family,
take $C \times C$ and sections $\sigma_i = \Delta$ (the diagonal) and $\sigma_j = C \times p_j$ when $j \neq i$. Then we blow up the points $(p_j, p_j)$ for $j \neq i$.
Write $\nu:X \to C \times C$ for the blow up, and $E_j$ for the exceptional divisors. Then $T_i$ is the family $X \to C$ with sections $\tilde{\sigma}_1, \ldots, \tilde{\sigma}_n$ which are the proper transforms of $\sigma_1, \ldots, \sigma_n$. 
\begin{center}
\includegraphics[width=6in]{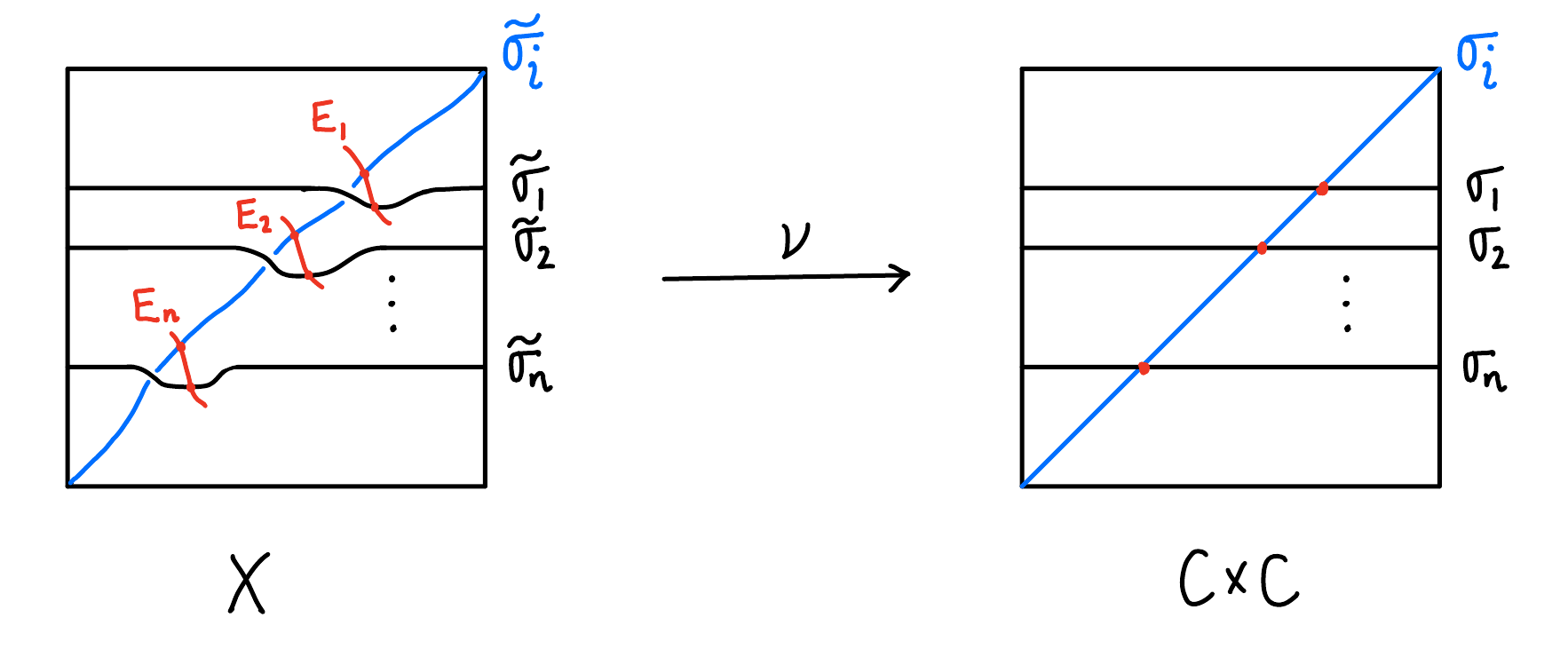}
\end{center}

Since $\sigma_i \subset C \times C$ is the diagonal, we have 
\[\sigma_i^2 = \deg N_{\sigma_i/X} = \deg T_{C \times C} - \deg T_C = -(2g-2).\]
Then we have
\[-(2g-2) = \sigma_i^2 = (\nu^*\sigma_i)^2 = (\tilde{\sigma}_i + \sum_{j \neq i} E_j)^2 = \tilde{\sigma}_i^2 + 2(n-1) - (n-1). \]
Hence, $\tilde{\sigma}_i^2 = -(2g + n - 3)$. In a similar manner, for $j \neq i$
\[0 = \sigma_j^2 = (\nu^*\sigma_j)^2 = (\tilde{\sigma}_j + E_j)^2 = \tilde{\sigma}_j^2 + 2 - 1,\]
so $\tilde{\sigma}_j^2 = -1$.
By Lemma \ref{psitest}, we therefore obtain
\begin{align} \label{i1}
T_i \cdot \psi_j &= \begin{cases} 1 & \text{if } i \neq j \\
2g + n - 3 & \text{if } i = j. \end{cases}
\intertext{Because the total space of our test family is smooth, the intersection of $T_i$ with each boundary divisor $\Delta_{jk}$ is reduced.
Therefore, we obtain}
T_i \cdot \delta_{jk} &= \begin{cases} 1 & \text{if $j = i$ or $k= i$} \\ 0 & \text{if $j, k \neq i$}.\end{cases}
\end{align}

\subsubsection{The second family of test curves}
Our second family of test curves are curves $T_{ij}$ where $p_i$ runs over the curve, $p_j$ is its conjugate, and the other points are fixed non-Weierstrass points.
To build this, we start with $C \times C$ and take sections $\sigma_i = \Delta$ and $\sigma_j = \iota(\Delta)$ where $\iota : C\times C  \to C\times C$ sends $(p, q) \mapsto (p, \overline{q})$.
Now $\sigma_i$ and $\sigma_j$ intersect at the points $(p, p)$ where $p$ is Weierstrass. For $k \neq i, j$ we set $\sigma_k = C \times p_k$ to be a fixed non-Weierstrass point. Then $\sigma_k$ meets $\sigma_i = \Delta$ in $(p_k, p_k)$ and it meets $\sigma_j = \iota(\Delta)$ in $(\overline{p}_k, p_k)$.
Let $\nu: S \to C \times C$ be the 
blow up at all points $(p, p)$ with $p$ Weierstrass (call these exceptionals $E_1', \ldots, E_{2g+2}'$) and all points $(p_k, p_k)$ and $(\overline{p}_k, p_k)$ for $k \neq i,j$ (call these exceptionals $E_k$ and $\overline{E}_k$ respectively.)
\begin{center}
\includegraphics[width=6in]{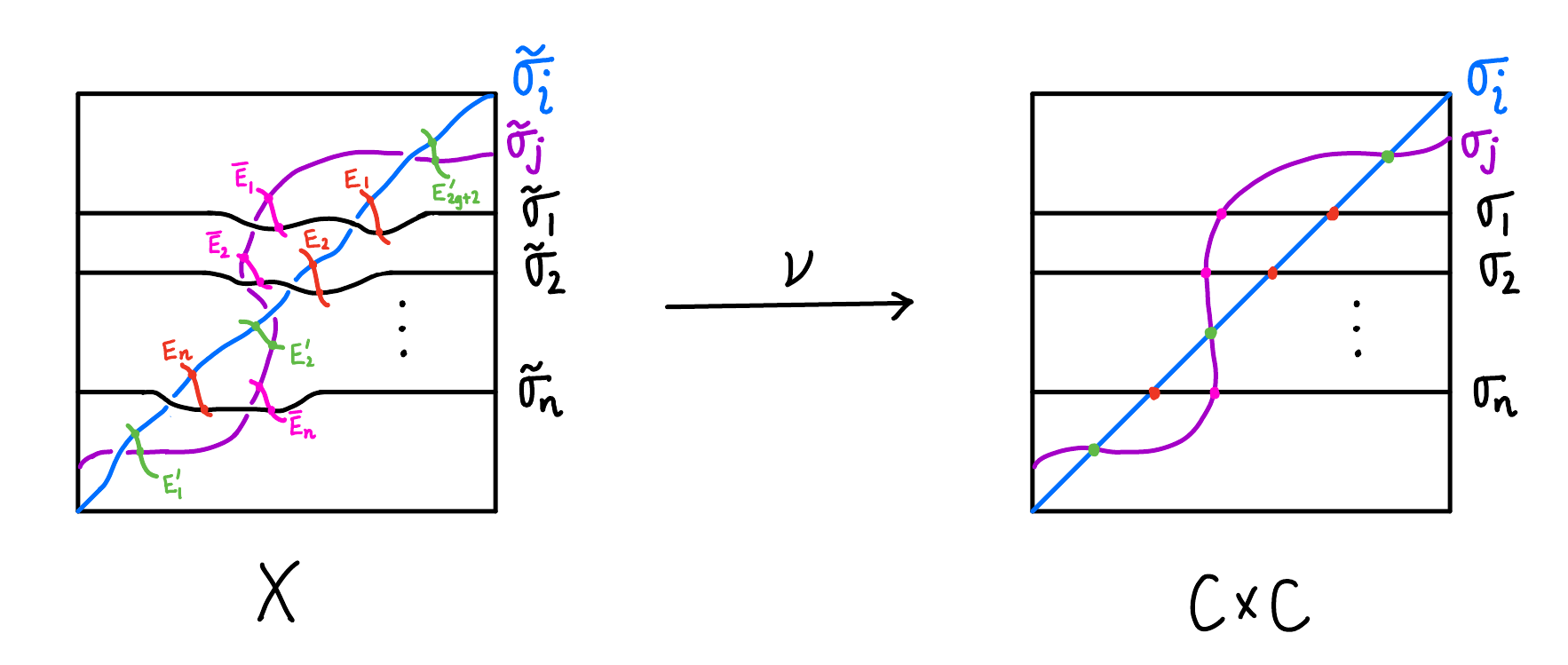}
\end{center}

To calculate the degree of the $\psi$ classes, we note that
\[-(2g-2) = (\nu^*\sigma_i)^2 = (\tilde{\sigma}_i + E_1'+\ldots + E_{2g+2}' + \sum_{k \neq i,j} E_k)^2 = \tilde{\sigma}_i^2 + 2(2g+2+n-2) - (2g+2+n-2) \]
and so $\tilde{\sigma}_i^2 = -(4g+n-2)$. A similar calculation (computing the self-intersection of the divisor $\nu^*\sigma_j = \tilde{\sigma}_j + E_1' + \ldots + E_{2g+2}' + \sum \overline{E}_k$) yields $\tilde{\sigma}_j^2 = -(4g+n-2)$ as well.
Meanwhile, for $k \neq i,j$, we have
\[0 = (\nu^*\sigma_k)^2 = (\tilde{\sigma}_k + E_k + \overline{E}_k)^2 =\tilde{\sigma}_k^2 + 4 - 2 \]
and so $\tilde{\sigma}_k = -2$.
Therefore, by Lemma \ref{psitest} we have
\begin{align}
T_{ij} \cdot \psi_k &= \begin{cases} 2 & \text{if $k \neq i, j$} \\
4g+n-2  & \text{if $k = i$ or $k = j$.} \end{cases}
\intertext{Again, the total space of our test family is smooth, so the intersection of $T_{ij}$ with the boundary divisor $\Delta_{k\ell}$ is reduced.
It is then straightforward to count}
T_{ij} \cdot \delta_{k\ell} &= \begin{cases}
2g+2 & \text{if }\#(\{i,j\} \cap \{k,\ell\}) = 2 \\
1 & \text{if }\#(\{i,j\} \cap \{k,\ell\})= 1 \\
0 & \text{if }\#(\{i,j\} \cap \{k,\ell\}) = 0. \label{i4}
\end{cases}
\end{align}

\subsubsection{The intersection matrix and change of basis}
Consider the intersection matrix with rows the test curves $T_i$ and $T_{ij}$ and the columns the divisors $\psi_i$ and $\delta_{ij}$.
For example, when $n = 3$, the intersection matrix is given below.
\begin{center}
\begin{tabular}{c||c|c|c|c|c|c}
$\cdot$ & $\psi_1$ & $\psi_2$ & $\psi_3$ & $\delta_{12}$ & $\delta_{13}$ & $\delta_{23}$ \\
\hline
\hline
$T_1$ & $2g$ & $1$ & $1$ & $1$ & $1$ & $0$ \\
$T_2$ & $1$ & $2g$ & $1$ & $1$ & $0$ & $1$ \\
$T_3$ & $1$ & $1$ & $2g$ & $0$ & $1$ & $1$ \\
$T_{12}$ & $4g+1 $& $4g+1$ & $2$ & $2g+2$ & $1$ & $1$ \\
$T_{13}$ & $4g+1$ & $2$ & $4g+1$ & $1$ & $2g+2$ & $1$ \\
$T_{23}$ & $2$ & $4g+1$ & $4g+1$ & $1$ & $1$ & $2g+2$
\end{tabular}
\end{center}
We perform the column operations that subtract $\sum_{j \neq i}\delta_{ij}$ from the $\psi_i$ column. Then, we perform the row operations that subtract $T_i + T_j$ from $T_{ij}$. When $n = 3$, this gives:
\begin{center}
\begin{tabular}{c||c|c|c|c|c|c}
$\cdot$ & $\psi_1 - \delta_{12} - \delta_{13}$ & $\psi_2 - \delta_{12} - \delta_{23}$ & $\psi_3 - \delta_{13} -\delta_{23}$ & $\delta_{12}$ & $\delta_{13}$ & $\delta_{23}$ \\
\hline
\hline
$T_1$ & $2g-2$ & $0$ & $0$ & $1$ & $1$ & $0$ \\
$T_2$ & $0$ & $2g-2$ & $0$ & $1$ & $0$ & $1$ \\
$T_3$ & $0$ & $0$ & $2g-2$ & $0$ & $1$ & $1$ \\
$T_{12} - T_1 - T_2$ & $0$& $0$ & $0$ & $2g$ & $0$ & $0$ \\
$T_{13} - T_1 - T_3$ & $0$ & $0$ & $0$ & $0$ & $2g$ & $0$ \\
$T_{23} - T_2 - T_3$ & $0$ & $0$ & $0$ & $0$ & $0$ & $2g$
\end{tabular}
\end{center}
More generally, using \eqref{i1}--\eqref{i4}, 
one can see that, using this change of basis, the intersection matrix always takes the block form
\begin{center}
\begin{tabular}{c||c|c}
    $\cdot$ & $\psi_i - \sum \delta_{ij}$ & $\delta_{ij}$ \\
    \hline\hline
    $T_i$ & $(2g - 2) \cdot \mathrm{Id}$ & $*$ \\
    $T_{ij} - T_i - T_j$ & $0$ & $2g \cdot \mathrm{Id}$
\end{tabular}
\end{center}
In particular, the intersection matrix is full rank for all $g \geq 2$ and any $n$. It follows that $\psi_1, \ldots, \psi_n$ and the boundary divisors $\delta_{ij}$ are independent on $\tilde{\H}_{g,n}$.
\end{proof}

\subsection{Spanning of $\psi$ classes} \label{spanning}
Our goal in this section is to show that $A^1(\H_{g,n})$ is spanned by $\psi_1, \ldots, \psi_n$ when $n \leq 2g+6$. The first step is the following.

\begin{lem} \label{no-extra}
For $n \leq 2g+6$, we have $A^1(\I_{g,n}^{\circ,1} \smallsetminus \Delta) = 0$.
\end{lem}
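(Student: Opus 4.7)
The plan is to use the quotient-stack presentation of $\I_{g,n}^{\circ,1}$ developed in Section \ref{quotstack}. Under the hypothesis $n \leq 2g+6$ (via Lemma \ref{2g6} when $n \geq g+1$, and directly when $n \leq g$), we have an open immersion $\I_{g,n}^{\circ,1} \hookrightarrow \pp\F$, where $\pp\F \to \B_{g,n}$ is a projective bundle and $\B_{g,n}$ is an open substack of an iterated affine bundle $\mathcal{A}_{g,n} \to \BPU \times \BPU$. Combined with the isomorphism $\PU \cong \gg_m \ltimes \gg_a$ from the proof of Lemma \ref{dglem}, giving $A^*(\BPU) = A^*(\mathrm{B}\gg_m) = \qq[t]$, these structural results imply $\dim A^1(\pp\F) \leq 3$: at most two classes pulled back from $\BPU \times \BPU$, together with $\zeta := c_1(\O_{\pp\F}(1))$ from the projective bundle theorem.

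To conclude $A^1(\I_{g,n}^{\circ,1} \smallsetminus \Delta) = 0$, I would exhibit enough divisors in the complement $\pp\F \smallsetminus (\I_{g,n}^{\circ,1} \smallsetminus \Delta)$ so that their classes span all of $A^1(\pp\F)$, whence the desired vanishing follows by excision. The key candidates are: the discriminant locus $\Delta$ itself (singular $(g+1,2)$ curves); when $n < g+1$, the divisor $D_{11}$ where the equation for $C$ is tangent to the vertical fiber through $\sigma_1$ (so $\sigma_1$ is Weierstrass); and a ``horizontal degeneration'' divisor where the restriction of $f$ to the horizontal line through $\sigma_1$ degenerates---for $n \leq g$ this is the hyperplane in $\pp\F$ where the coefficient of $x_1^{g+1} y_0^2$ in \eqref{PQR} vanishes (forcing $y_1 \mid f$ and $C$ to contain the horizontal line); for $n \geq g+1$ it is the analogous locus where $f|_{\pi_2^{-1}(\pi_2(\sigma_1))}$ acquires a repeated root, preventing $\sigma_{g+1}$ from being well-defined as in Section \ref{savings}. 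When $n \geq 2$, diagonals $\pi_1(\sigma_j) = \pi_1(\sigma_k)$ excised in passing from $\mathcal{A}_{g,n}$ to $\B_{g,n}$ further cut down $\dim A^1(\B_{g,n})$. I would compute each of these classes in $A^1(\pp\F)$ as a combination of $\zeta$ and pulled-back base classes using principal-parts and evaluation-map exact sequences, in the spirit of the proofs of Lemmas \ref{0pt} and \ref{1pt}.

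The main obstacle will be the linear-independence verification across all regimes of $n$. The tightest case is $n = 1$: no base diagonals have been excised, so $A^1(\pp\F)$ attains its full possible dimension $3$, and one then needs $\Delta$, $D_{11}$, and the horizontal degeneration divisor to give three linearly independent classes. This reduces to an explicit numerical computation of the $\zeta$-coefficient and base contribution of each divisor, paralleling the discriminant and evaluation-map calculations already carried out for $A^*(\I_{g,0})$ and $A^*(\I_{g,1})$ in Lemmas \ref{0pt} and \ref{1pt}. For $n \geq 2$, an induction step (using Proposition \ref{induction} to reduce to the case of fewer marked points, or else directly counting available divisors against $\dim A^1(\pp\F) \leq 2$) can be used to package the check uniformly.
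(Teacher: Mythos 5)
Your strategy is the paper's: present $\I_{g,n}^{\circ,1}$ as an open substack of a projective bundle $\pp \F$ over (an open subset of an affine bundle over) $\BPU \times \BPU$, note that $A^1(\pp \F)$ is spanned by $\zeta = c_1(\O_{\pp \F}(1))$, $c_1 = c_1(\V)$ and $d_1 = c_1(\W)$, and kill all three generators by excising divisors lying in the complement. Your three candidate divisors are exactly the ones the paper computes: $\Delta$ kills $c_1$ (since \eqref{dclass} shows $\delta$ is a nonzero multiple of $c_1$); the ``horizontal degeneration'' locus (the equation vanishing to one higher order along $\pi_2^{-1}(\pi_2(\sigma_1))$) has class of the form $\zeta + \ast\, c_1 - 2d_1$; and the vertical-tangency locus has class $\zeta - (g+1)c_1$. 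Setting these to zero in that order gives $c_1 = \zeta = d_1 = 0$, which is precisely the paper's argument.

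The gap is in the range $g+1 \leq n \leq 2g+6$. There you withhold the vertical-tangency divisor (understandably, since by convention \eqref{conv} one has $\I_{g,n}^{\circ,1} = \I_{g,n}^{\circ}$ for $n \geq g+1$, which does not exclude $D_{11}$), leaving only $\Delta$, the horizontal degeneration, and the diagonals. That is one relation short. The diagonal loci $\pi_1(\sigma_j) = \pi_1(\sigma_k)$ live over the base and their classes involve neither $d_1$ nor $\zeta$ (they are multiples of $c_1$ after restricting to the affine bundle), so they only duplicate the relation already supplied by $\Delta$; and Proposition \ref{induction} concerns generation by divisors of the $D_{ij}$ with $i \neq j$ and does not give a surjection onto $A^1$ of the open locus in question. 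After $c_1 = 0$ and $\zeta = 2d_1$ you cannot conclude $\zeta = d_1 = 0$. The paper uses the vertical-tangency relation for \emph{all} $n$; for $n \geq g+1$ this amounts to proving the vanishing on $\I_{g,n}^{\circ} \smallsetminus (D_{11} \cup \Delta)$, which suffices for every downstream application since the lemma is only ever applied to $\H_{g,n} \smallsetminus \bigcup_{i,j} D_{ij}$, where each $D_{ii}$ has already been removed. So you should reinstate the vertical-tangency divisor for large $n$ (at the cost of also excising $D_{11}$), rather than relying on the diagonals or an induction to make up the missing relation.
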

\begin{proof}
We described $\I_{g,n}^{\circ,1}$ in Section \ref{smn} as an open subset of a projective bundle $\pp \F$ over $\B^\circ \subseteq \B_{g,n}$, which in turn is an open subset of an affine bundle over $\BPU \times \BPU$.
By the projective bundle theorem $A^1(\pp \F)$ is generated by $\zeta := c_1(\O_{\pp \F}(1))$ and the generators $c_1 := c_1(\V)$ and $d_1 := c_1(\W)$ of $A^1(\BPU \times \BPU)$. Our goal is thus to show that $\zeta, c_1, d_1$ all restrict to zero on $\I_{g,n}^{\circ,1} \smallsetminus \Delta \subset \pp \F$. 

By construction, the pullback of $\V$ to $\I_{g,n}^{\circ,1} \subset \pp \F \to \BPU \times \BPU$ is the rank two bundle  $f_*\O_{\C}(p_1 + \overline{p}_1)$, whose projectivization is the universal $\pp^1$ bundle over $\I_{g,1}$. Therefore, $c_1$ is the pullback of $c_1$ from Lemmas \ref{1pt} and \ref{0pt} in Section \ref{relsec}. In particular, Equation \ref{dclass} says $\delta$ is a non-zero multiple of $c_1$, so $c_1 = 0 \in A^1(\I_{g,n}^{\circ,1} \smallsetminus \Delta)$.
Below, we find two more independent relations from studying components of the complement of $\I_{g,n}^{\circ,1} \subset \pp \F$. Let $e = g-n+1$ if $n \leq g$ and let $e = 0$ if $n \geq g+1$ so that
\[\F = \ker\left(\phi^* \E \to \sigma_1^*P^e_{\pp \V \times \pp \W/\pp \W}(\N) \oplus \bigoplus_{2 \leq j \leq n}\sigma_j \N\right).\]

\subsubsection{Meeting the horizontal ruling with too high multiplicity at $\sigma_1$} \label{firstrel}
Let $\F' \subset \F$ be the kernel of
\[\phi^* \E \to \sigma_1^*P^{e+1}_{\pp \V \times \pp \W/\pp \W}(\N) \oplus \bigoplus_{\substack{2 \leq j \leq n \\ j \neq g+1}}\sigma_j \N\]
Equations in $\pp \F'$ vanish to order $e+2$ along $\pi_2^{-1}(\pi_2(\sigma_1))$.
If $n \leq g$, then taking into account vanishing along the other sections, the vanishing of this equation meets the ruling with multiplicity $g+2$, so  contains the entire ruling. On the other hand, if $n \geq g+1$, then the vanishing of such an equation cannot be $g+1$ distinct points.
In either case, $\pp \F' \subset \pp \F$ lies in the complement of $\I_{g,n}^{\circ,1} \subset \pp \F$. 
By the snake lemma, we have
\[\F/\F' \cong \ker(\sigma_1^*P^{e+1}_{\pp \V \times \pp \W/\pp \W}(\N) \to \sigma_1^*P^{e}_{\pp \V \times \pp \W/\pp \W}(\N)) \cong \sigma_1^*(\N \otimes \Omega_{\pp \V \times \pp \W/\pp \W}^{\otimes e+1}).\]
The divisor $\pp\F' \subset \pp \F$ is defined by the vanishing of the composition
\[\O_{\pp \F}(-1) \to \F \to \F/\F' \cong \sigma_1^*(\N \otimes \Omega_{\pp \V \times \pp \W/\pp \W}^{\otimes e+1}).
\]
Therefore, the fundamental class of $\pp \F' \subset \pp \F$ is
\begin{align*}c_1(\O_{\pp \F}(1) \otimes \sigma_1^*(\N \otimes \Omega_{\pp \V \times \pp \W/\pp \W}^{\otimes g} )) &= \zeta + c_1(\sigma_1^*\pi_1^*\O_{\pp \V}(g+1)) \\
& \qquad + c_1(\sigma_1^*\pi_2^*\O_{\pp \W}(2)) + c_1(\sigma_1^*\Omega_{\pp \V \times \pp \W/\pp \W}^{\otimes e+1
}).
\intertext{First, note that $\Omega_{\pp \V\times \pp \W/\pp\W} \cong \pi_1^*\O_{\pp \V}(-2)$. 
Now, $v = \pi_1\circ \sigma_1$ represents the class $\O_{\pp \V}(1)$, so $v^*\O_{\pp \V}(1) = \pi_{1*}(c_1(\O_{\pp \V}(1))^2) = -c_1(\V) = -c_1$.
Similarly, $\sigma_1^*\pi_2^*\O_{\pp \W}(1) = -c_1(\W) = -d_1$. Therefore, the above becomes}
[\pp \F'] &= \zeta - (g+1)c_1  - 2d_1 + 2(e+1)c_1 \\
&= \zeta + (2e-g+1)c_1 - 2d_1.
\end{align*}
Hence, we obtain the relation $\zeta + (2e-g+1)c_1 - 2d_1 = 0 \in A^1(\I_{g,n}^{\circ,1})$.

\subsubsection{Meeting the vertical line with too high multiplicity at $\sigma_1$} \label{sp}
In $\I_{g,n}^{\circ, 1}$, the first marked point is not Weierstrass. Therefore, equations in $\pp \F$ that are tangent to the vertical ruling at $\sigma_1$ lie in the complement of $\I_{g,n}^{\circ, 1}$.
Let $\F'' \subset \F$ be the subbundle of equations whose vanishing is tangent to the vertical ruling at $\sigma_1$. Then 
\[\F/\F'' \cong \ker(\sigma_1^*P^1_{\pp \V \times \pp \W/\pp \V}(\N) \to \sigma_1^*\N) \cong \sigma_1^*(\N \otimes \Omega_{\pp \V \times \pp \W/\pp \V}).\]
 Similarly to before, we calculate
 \begin{align*}c_1(\sigma_1^*(\N \otimes \Omega_{\pp \V\times \pp \W/\pp \V})) &= c_1(\sigma_1^*\pi_1^*\O_{\pp \V}(g+1)) + c_1(\pi_1^*\pi_2^*\O_{\pp \W}(2)) + c_1(\sigma_1^*\Omega_{\pp \V \times \pp \W/\pp \V}) \\
 &= -(g+1)c_1 - 2d_1 + 2d_1 = -(g+1)c_1.
 \end{align*}
 Hence, the fundamental class of $\pp \F'' \subset \pp \F$ is $\zeta - (g+1)c_1$. Thus, we obtain a relation $\zeta - (g+1) c_1 = 0 \in A^1(\I_{g,n}^{\circ, 1} \smallsetminus \Delta)$. Since we have already said $c_1 = 0$, this shows $\zeta = 0$. Then the relation in \ref{firstrel} shows $d_1 = 0$ too.
\end{proof}

First, we conclude the case $n = 2$.
\begin{lem}
The classes $\psi_1, \psi_2$ form a basis for $A^1(\H_{g,2})$. Hence $A^1(\I_{g,2})$ is spanned by $\psi_1, \psi_2,$ and $\delta$.
\end{lem}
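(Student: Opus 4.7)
The plan is to combine Lemma \ref{no-extra} with excision to obtain an upper bound $\dim A^1(\H_{g,2}) \leq 2$, and then invoke the independence statement of Lemma \ref{psi-indep} to force $\psi_1,\psi_2$ to be a basis by a dimension count. Crucially, this approach sidesteps the Edidin--Hu formulas \eqref{dii} and \eqref{dij} for $[D_{11}]$ and $[D_{12}]$ in terms of $\psi$ and $\delta$, since those formulas were derived assuming Scavia's basis result---precisely what Section \ref{pp} is generalizing. So the argument must avoid any circular appeal to those expressions.

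First I would specialize Lemma \ref{no-extra} to $n=2$ (valid since $2 \leq 2g+6$), which yields $A^1(\I_{g,2}^{\circ,1} \smallsetminus \Delta) = 0$. By the definitions in Section \ref{stacks}, this open substack equals $\H_{g,2} \smallsetminus (D_{11} \cup D_{12})$. The standard excision sequence then becomes
\[A^0(D_{11} \cap \H_{g,2}) \oplus A^0(D_{12} \cap \H_{g,2}) \to A^1(\H_{g,2}) \to 0.\]
Each summand on the left is a copy of $\qq$: the Weierstrass locus $D_{11} \cap \H_{g,2}$ is irreducible because the monodromy on the $2g+2$ branch points of the hyperelliptic map acts transitively, and $D_{12} \cap \H_{g,2}$ is identified with an open substack of $\H_{g,1}$ via the forgetful map of Proposition \ref{induction}. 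Hence $\dim A^1(\H_{g,2}) \leq 2$. Combined with the independence of $\psi_1,\psi_2$ from Lemma \ref{psi-indep}, they must form a basis.

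For the second claim, excision with respect to the irreducible divisor $\Delta \subset \I_{g,2}$ (whose irreducibility was established in Section \ref{tsec}) gives
\[\qq \cdot \delta \to A^1(\I_{g,2}) \to A^1(\H_{g,2}) \to 0.\]
Lifting the basis $\psi_1,\psi_2$ of $A^1(\H_{g,2})$ to the corresponding classes in $A^1(\I_{g,2})$, we conclude $A^1(\I_{g,2}) = \langle \psi_1, \psi_2, \delta \rangle$. The only step deserving any caution is verifying that the proof of Lemma \ref{no-extra} itself does not rely on Edidin--Hu's formulas; inspection shows it only uses the quotient stack presentation together with Equation \eqref{dclass} from Lemma \ref{0pt}, all of which are valid in any characteristic $\neq 2$, so no circularity arises.
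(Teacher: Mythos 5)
Your proposal is correct and follows essentially the same route as the paper: specialize Lemma \ref{no-extra} to $n=2$, identify $\I_{g,2}^{\circ,1}\smallsetminus\Delta$ with $\H_{g,2}\smallsetminus(D_{11}\cup D_{12})$, bound $\dim A^1(\H_{g,2})\leq 2$ by excision, and conclude with the independence from Lemma \ref{psi-indep}. The only difference is that you spell out the irreducibility of $D_{11}\cap\H_{g,2}$ and $D_{12}\cap\H_{g,2}$ and the non-circularity with respect to the Edidin--Hu formulas, both of which the paper leaves implicit.
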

\begin{proof}
We have $\H_{g,2} \smallsetminus (D_{11} \cup D_{12}) = \I_{g,2}^{\circ, 1} \smallsetminus \Delta$. Hence $A^1(\H_{g,2} \smallsetminus (D_{11} \cup D_{12})) = 0$ by Lemma \ref{no-extra}. It follows that $\dim A^1(\H_{g,2}) \leq 2$. Meanwhile, Lemma \ref{psi-indep} shows $\psi_1$ and $\psi_2$ are independent in $A^1(\H_{g,2})$, 
so we can conclude that $\psi_1, \psi_2$ are a basis for $A^1(\H_{g,2})$. 
\end{proof}

This allows us to see that the divisors we have removed are tautological for any $n$.
\begin{lem} \label{dijl}
For any $g, n$, the class $[D_{ij}] \in A^1(\I_{g,n})$ lies in the span of $\psi_i, \psi_j$ and $\delta$.
\end{lem}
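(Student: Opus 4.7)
The plan is to reduce to the case $n=2$ already handled by the preceding lemma, via a forgetful morphism. Specifically, I would introduce $\pi \colon \I_{g,n} \to \I_{g,2}$ that remembers only the $i$-th and $j$-th marked points, relabeled as the first and second. Since $g \geq 2$ and the curves parametrized by $\I_{g,n}$ are irreducible (so that forgetting marked points never destabilizes the curve or creates unwanted rational components), this morphism is well-defined and flat, realized as an open substack of an iterated universal curve over $\I_{g,2}$.

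The key identification I would then use is that $\pi^{-1}(D_{12}) = D_{ij}$ as closed substacks of $\I_{g,n}$: both are cut out set-theoretically and scheme-theoretically by the condition that the two distinguished marked points be conjugate under the hyperelliptic involution, and this condition is the pullback of the analogous condition on $\I_{g,2}$. By flatness of $\pi$ together with the fact that $D_{12}$ is a reduced Cartier divisor, this yields
\[
\pi^*[D_{12}] = [D_{ij}] \in A^1(\I_{g,n}).
\]

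By the immediately preceding lemma (which in turn rests on Lemma \ref{no-extra} and Lemma \ref{psi-indep}), we have $A^1(\I_{g,2})$ spanned by $\psi_1, \psi_2, \delta$. So we may write $[D_{12}] = a\psi_1 + b\psi_2 + c\delta$ for some $a,b,c \in \qq$. Pulling back and using the compatibilities $\pi^*\psi_1 = \psi_i$ and $\pi^*\psi_2 = \psi_j$ recorded earlier in the paper for forgetful maps among the $\I_{g,n}$, combined with $\pi^*\delta = \delta$ (the nodal locus pulls back to the nodal locus, since $\pi$ is a composition of universal curve forgetful maps restricted to the irreducible locus), we conclude
\[
[D_{ij}] = a\psi_i + b\psi_j + c\delta,
\]
as desired.

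There is essentially no obstacle here; the one point worth being precise about is justifying flatness of $\pi$ and the compatibility $\pi^*\delta = \delta$, but these are routine consequences of the construction of $\I_{g,n}$ as a locally closed substack of $\Mb_{g,n}^{\mathrm{irr}}$ together with the fact that forgetful maps on $\Mb_{g,n}$ are flat and preserve the irreducible nodal locus. Note also that the case $i=j$ is immediate from the analogous identification $\pi^{-1}(D_{11}) = D_{ii}$ under the map $\I_{g,n} \to \I_{g,1}$ forgetting all but the $i$-th point, reducing to the computation of $A^1(\I_{g,1})$ in Lemma \ref{1pt}.
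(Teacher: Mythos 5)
Your proof is correct and is essentially identical to the paper's: both reduce to the $n=2$ (resp.\ $n=1$) case via the forgetful map, identify $D_{ij}$ as the pullback of $D_{12}$ (resp.\ $D_{11}$), and invoke the spanning result for $A^1(\I_{g,2})$ (resp.\ Lemma \ref{1pt}) together with the compatibility of $\psi$ classes and $\delta$ under forgetful maps. The extra care you take with flatness and $\pi^*\delta = \delta$ is fine but not needed beyond what the paper already records.
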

\begin{proof}
If $i \neq j$, the divisor $D_{ij}$ is the pullback of $D_{12} \subset \I_{g,2}$ under the map $\I_{g,n} \to \I_{g,2}$ that forgets all but the $i^{\mathrm{th}}$ and $j^{\mathrm{th}}$ marked points. Then this follows from the previous lemma which said $\psi_1, \psi_2, \delta$ span $A^1(\I_{g,2})$.

If $i = j$, then $D_{ii}$ is the pullback of $D_{11} \subset \I_{g,1}$, so this follows from Lemma \ref{1pt} which showed $A^1(\I_{g,1})$ is spanned by $\psi_1$ and $\delta$.
\end{proof}

\begin{lem}
If $n \leq 2g+6$, then
the classes $\psi_1, \ldots, \psi_n$ form a basis for $A^1(\H_{g,n})$. Hence, $A^1(\I_{g,n})$ is spanned by $\psi_1, \ldots, \psi_n$ and $\delta$.
\end{lem}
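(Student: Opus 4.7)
The plan is to combine the three pieces assembled just above -- the vanishing in Lemma \ref{no-extra}, the formula for $[D_{ij}]$ in Lemma \ref{dijl}, and the independence in Lemma \ref{psi-indep} -- via two successive excisions. I would first prove the claim about $A^1(\H_{g,n})$, then bootstrap to $\I_{g,n}$.

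For the first step, recall that $\H_{g,n} = \I_{g,n} \smallsetminus \Delta$ and that removing from $\I_{g,n}$ the divisors $D_{ii}$ (when $n < g+1$) and $D_{jk}$ (for $j < k$) yields exactly $\I_{g,n}^{\circ, 1}$, by the definition in \eqref{Hl}--\eqref{conv}. Intersecting this with $\H_{g,n}$ gives $\I_{g,n}^{\circ, 1} \smallsetminus \Delta$. By excision, there is a surjection
\[
\bigoplus [D_{??}]\,\qq \;\oplus\; A^1(\I_{g,n}^{\circ, 1} \smallsetminus \Delta) \twoheadrightarrow A^1(\H_{g,n}),
\]
where the first summand ranges over the divisors we excised (interpreted as classes in $A^1(\H_{g,n})$). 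By Lemma \ref{no-extra}, the second summand vanishes when $n \leq 2g+6$. By Lemma \ref{dijl}, each $[D_{ij}] \in A^1(\I_{g,n})$ lies in the span of $\psi_i, \psi_j, \delta$; restricting to $\H_{g,n}$ kills $\delta$, so $[D_{ij}]|_{\H_{g,n}}$ lies in the span of $\psi_i$ and $\psi_j$. Hence $A^1(\H_{g,n})$ is spanned by $\psi_1, \ldots, \psi_n$. Combined with Lemma \ref{psi-indep}, which gives their linear independence in $A^1(\H_{g,n})$, we conclude that $\psi_1, \ldots, \psi_n$ form a basis.

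For the second step, applying excision to $\Delta \subset \I_{g,n}$ yields the right exact sequence
\[
A^0(\Delta) \longrightarrow A^1(\I_{g,n}) \longrightarrow A^1(\H_{g,n}) \longrightarrow 0,
\]
whose left map sends the fundamental class of $\Delta$ to $\delta$. Lifting the basis $\psi_1, \ldots, \psi_n$ of $A^1(\H_{g,n})$ to the classes of the same name on $\I_{g,n}$ and appending $\delta$ therefore produces a spanning set for $A^1(\I_{g,n})$, finishing the proof.

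There is no serious obstacle at this point: every ingredient has been prepared. The only thing to be careful about is the bookkeeping of which divisors are being excised in the two regimes $n < g+1$ and $n \geq g+1$ (in the former case $D_{11}$ must also be accounted for), and noting that $D_{11}|_{\H_{g,n}}$ is covered by Lemma \ref{dijl} as the pullback of $D_{11} \subset \I_{g,1}$, whose class is a combination of $\psi_1$ and $\delta$ by Lemma \ref{1pt}.
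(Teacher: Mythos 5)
Your argument is correct and is essentially the paper's own proof: excise the divisors $D_{ij}$ from $\H_{g,n}$, use Lemma \ref{no-extra} to kill $A^1$ of the complement, use Lemma \ref{dijl} to put the excised classes in the span of the $\psi_i$, and invoke Lemma \ref{psi-indep} for independence; the statement about $\I_{g,n}$ then follows by excising $\Delta$. You simply spell out the bookkeeping (which divisors are removed in the two regimes, and the final excision of $\Delta$) that the paper leaves implicit.
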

\begin{proof}
By Lemma \ref{no-extra}, we see $A^1(\H_{g,n} \smallsetminus \bigcup_{i,j} D_{ij}) = 0$. But by Lemma \ref{dijl}, each $[D_{ij}]$ lies in the span of $\psi_i$ and $\psi_j$. It follows that $A^1(\H_{g,n})$ is spanned by $\psi_1, \ldots, \psi_n$. They are a basis by Lemma \ref{psi-indep}.
\end{proof}

Having established (a) and (b) from the beginning of this section, our previous proof of Theorem \ref{divgen} now gives the result in any characteristic $\neq 2$.

\bibliographystyle{amsplain}
\bibliography{refs}
\end{document}